\newcommand{\ubar}[1]{\underaccent{\bar}{#1}}
\theoremstyle{plain}
\newtheorem{thm}{Theorem}
\newtheorem{defin}[thm]{Definition}
\newtheorem{lem}[thm]{Lemma}
\newtheorem{cor}[thm]{Corollary}
\newtheorem{rem}[thm]{Remark}
\newtheorem{prop}[thm]{Proposition}
\newtheorem{ass}[thm]{Assumption}
\numberwithin{thm}{section}
\numberwithin{equation}{section}
\newcommand{\E}{\mathbb{E}}
\renewcommand{\H}{\mathbb{H}}
\newcommand{\R}{\mathbb{R}}
\newcommand{\W}{\mathbb{W}}
\newcommand{\N}{\mathbb{N}}
\newcommand{\mG}{\mathcal{G}}
\newcommand{\mF}{\mathcal{F}}
\newcommand{\mR}{\mathcal{R}}
\newcommand{\tv}{{\tilde{\vi}}}
\newcommand{\mL}{\mathcal{L}}
\newcommand{\mO}{\mathcal{O}}
\newcommand{\mW}{\mathcal{W}}
\newcommand{\mD}{\mathcal{D}}
\newcommand{\mM}{\mathcal{M}}
\newcommand{\mA}{\mathcal{A}}
\newcommand{\bk}{\tv}
\newcommand{\dr}{\mathcal D_R}
\newcommand{\eps}{\varepsilon}
\newcommand{\de}{\delta_\eps}
\newcommand{\der}{\frac{\delta_\eps^2}{\eps^2}}
\newcommand{\tde}{\tilde{\delta}_\eps}
\newcommand{\tder}{{\tde^2}/{\eps^2}}
\newcommand{\tsh}{\tilde{\mathcal{Q}}}
\newcommand{\tom}{\tilde{\mathcal{Z}}}
\newcommand{\toma}{\tilde{\mathcal{Z}}_\alpha}
\newcommand{\tomta}{\tilde{\mathcal{Z}}_{\tilde{\alpha}}}
\newcommand{\rd}{\R^d}
\newcommand{\ltwo}{L^2(\mO)}
\newcommand{\ltwod}{L^2(\mD)}
\newcommand{\cb}{C^\beta(\mO)}
\newcommand{\cad}{C^{\alpha-d-1}(\mO)}
\newcommand{\cbee}{C^b(\mO)}
\newcommand{\cbr}{C^\beta(\rd)}
\newcommand{\cbeer}{C^b(\rd)}
\newcommand{\ba}{B^\alpha_{11}(\mO)}
\newcommand{\bta}{B^{\tilde{\alpha}}_{11}(\mO)}
\newcommand{\bard}{B^\alpha_{11}(\rd)}
\newcommand{\bak}{B^{\alpha+\kappa}_{11}(\mO)}
\newcommand{\had}{H^{\alpha-\frac{d}2}(\mO)}
\newcommand{\hard}{H^{\alpha-\frac{d}2}(\rd)}
\newcommand{\hk}{(H^\kappa(\mO))^\ast}
\newcommand{\hkr}{(H^\kappa(\rd))^\ast}
\newcommand{\mAe}{\mA_\eps}
\newcommand{\vi}{\mathcal{V}}
\newcommand{\vast}{\mW_\ast}
\newcommand{\dast}{\mathcal{D}_\ast}
\newcommand{\jast}{J_\ast}
\newcommand{\psast}{\Psi_\ast}
\newcommand{\kmap}{\tilde K}
\newcommand{\tp}{\tilde{\Pi}}
\newcommand{\tf}{\tilde{F}}
\newcommand{\tc}{\tilde{S}}
\newcommand{\subcol}{\Psi_{\mO}}
\newcommand{\cona}{c_1}
\newcommand{\conb}{c_2}
\newcommand{\conf}{c_1}
\newcommand{\conh}{c_2}
\newcommand{\coni}{c}
\newcommand{\conj}{c_{1}}
\newcommand{\conk}{c_{2}}
\newcommand{\conl}{c_{3}}
\newcommand{\cons}{D}
\newcommand{\conu}{C''}
\newcommand{\crm}{c_M}
\newcommand{\norm}[1]{\|#1 \|}
\newcommand{\rp}{\mathbb{P}}
\newcommand{\ip}[2]{\big\langle#1,#2\big\rangle}
\newcommand{\supp}{{\rm supp}}
\newcommand{\pe}{\Pi_\eps}
\newcommand{\peg}{\Pi_\eps^{\mG}}
\newcommand{\del}{\eta} 
\newcommand{\K}{\mathcal{K}}
\newcommand{\tals}{\tau_\lambda^2} 
\newcommand{\jle}{\mathcal{J}_{\lambda,\eps}} 
\newcommand{\scaling}{\rho}
\newcommand{\dist}{\mu}
\newcommand{\Fpls}{\hat{F}_{PLS}}
\newcommand{\fpls}{\hat{f}_{PLS}}
\begin{document}

\begin{frontmatter}


\title{Laplace priors and spatial inhomogeneity in Bayesian inverse problems}


\runtitle{Laplace priors in inverse problems}

\begin{aug}
\author[A]{\fnms{Sergios} \snm{Agapiou}\ead[label=e1]{agapiou.sergios@ucy.ac.cy}}
\author[B]{\fnms{Sven} \snm{Wang}\ead[label=e2]{svenwang@mit.edu}}
\address[A]{Department of Mathematics and Statistics,
University of Cyprus, Nicosia, Cyprus.
\printead{e1}}

\address[B]{Institute for Data, Systems and Society,
Massachusetts Institute of Technology, Cambridge, USA.
\printead{e2}}
\end{aug}
\runauthor{S. Agapiou and S. Wang}

\begin{abstract}
Spatially inhomogeneous functions, which may be smooth in some regions and rough in other regions, are modelled naturally in a Bayesian manner using so-called \textit{Besov priors} which are given by random wavelet expansions with Laplace-distributed coefficients. This paper studies theoretical guarantees for such prior measures -- specifically, we examine their frequentist posterior contraction rates in the setting of non-linear inverse problems with Gaussian white noise. Our results are first derived under a general local Lipschitz assumption on the forward map. We then verify the assumption for two non-linear inverse problems arising from elliptic partial differential equations, the \textit{Darcy flow} model from geophysics as well as a model for the \textit{Schr\"odinger equation} appearing in tomography. In the course of the proofs, we also obtain novel concentration inequalities for penalized least squares estimators with $\ell^1$ wavelet penalty, which have a natural interpretation as maximum a posteriori (MAP) estimators. The true parameter is assumed to belong to some spatially inhomogeneous Besov class $B^{\alpha}_{11}$, with $\alpha>0$ sufficiently large.
In a setting with direct observations, we complement these upper bounds with a lower bound on the rate of contraction for \textit{arbitrary} Gaussian priors. An immediate consequence of our results is that while Laplace priors can achieve minimax-optimal rates over $B^{\alpha}_{11}$-classes, Gaussian priors are limited to a (by a polynomial factor) slower contraction rate. This gives information-theoretical justification for the intuition that Laplace priors are more compatible with $\ell^1$ regularity structure in the underlying parameter.
\end{abstract}

\begin{keyword}[class=MSC2020]
\kwd[Primary ]{62G20} 
\kwd[; secondary ]{62F15} 
\kwd{35R30}
\end{keyword}

\begin{keyword}
\kwd{Bayesian nonparametric inference}
\kwd{frequentist consistency}
\kwd{inverse problems}
\kwd{Laplace prior}
\kwd{spatially inhomogeneous functions} 
\end{keyword}

\end{frontmatter}

\tableofcontents

\section{Introduction}

In many complex non-parametric statistical models, inference tasks are characterized by indirect and noisy measurement schemes of some unknown parameter $F$, in which small observation errors in the data may result in large reconstruction errors. In mathematical terms, this is encapsulated in the framework of statistical inverse problems, where there is a known and `ill-posed' forward map $\mathcal G$ between suitable function spaces, and noisy measurements of $\mathcal G(F)$ are available. Here, `ill-posedness' refers to the lack of a continuous inverse $\mathcal G^{-1}$. Prototypical examples arise from image reconstruction and tomography \cite{KS05}, geophysics \cite{ILS14, Y86}, seismology \cite{MWBG12} and an abundance of other areas of science and engineering. In particular, a large class of forward maps arise from partial differential equations (PDE) and are \emph{non-linear} \cite{I06}; our main examples below are of this type. In recent years, the Bayesian approach to inverse problems enjoyed enormous popularity since it offers a natural paradigm to quantify uncertainty via `credible sets' and since the computation of high-dimensional posterior distributions via scalable Markov Chain Monte Carlo has become more feasible than ever before \cite{S10, DS17, CRSW13} 
-- with Gaussian processes being arguably the most widely used prior models for $F$.

This paper studies the case of \textit{spatially inhomogeneous} functions $F$, i.e. functions which may be locally constant in some areas and exhibit high variation (or even jumps) in other areas. This is obviously relevant in imaging settings but also, for instance, in geophysics, when one aims to model physical properties of layered media. It is well-known that a natural mathematical representation for such functions is given by Besov spaces $B^{s}_{pq}$ with $s>0,~p=q=1$,
which measure the local change of $F$ in an $L^1$-sense. Minimax estimation problems for such unknowns have been studied widely in the literature, see, e.g., \cite{DJ98} for direct problems and \cite{DD95}, where linear inverse problems are considered. A central finding of these works is that (in $L^2$-loss) linear estimators can only achieve polynomially slower convergence rates than the minimax rate.

The key modelling choice in the Bayesian setting is a prior distribution which reflects appropriately the regularity structure present in the parameter, and the paper \cite{LSS09} first proposed a class of sequence priors with independently drawn and appropriately weighted \textit{Laplace-distributed} wavelet coefficients (see also \cite{LS04, DHS12, DS17, KLSS21}) in order to capture the $\ell^1$-nature of $B^s_{11}$-norms. In particular, this leads to a Bayesian analogue of placing an $\ell^1$ penalty on the wavelet coefficients termed $B^s_{11}$-\emph{Besov} priors (while Gaussian priors are associated to $\ell^2$-type Sobolev norm penalties via their RKHS). In this paper, we investigate two distinct notions of posterior consistency for the Bayesian recovery of the ground truth parameter using such Laplace priors in the `frequentist' small noise limit; the data are assumed to be corrupted by white noise. While our main goal is to study the rates of contraction for the full posterior distribution \cite{GGV00,GV17}, our proofs do require us to also examine the concentration properties of variationally defined penalized least squares estimators with $\ell^1$-type wavelet penalty, which can be shown to correspond to Maximum a Posteriori (MAP) estimators arising from Laplace wavelet priors \cite{ABDH18}. Finally, note that we are not attempting to reconstruct a \textit{sparse} signal for which posterior contraction rates with Laplace priors are known to be sub-optimal \cite{CSV15}.

Building on seminal work in the 2000s \cite{GGV00,GV07,VV08}, significant progress was made in the frequentist study of nonparametric Bayesian inverse problems, at first in the linear setting, see, e.g., \cite{KVV11, ALS13, KR13}. Only recently, similar results were derived for non-linear PDE models in the papers \cite{N20, NVW18, MNP19b, AN19, GN20, HK22} -- these references mostly consider Gaussian or `uniformly bounded' wavelet priors and Sobolev/H\"older-type regularity assumptions on the ground truth. On the other hand, the paper \cite{ADH21} initiated the study of posterior contraction rates for Laplace priors by examining the concentration properties of product measures of `$p$-exponential' type ($1\le p\le 2$), yielding contraction rates for standard `direct' nonparametric models (paralleling the results from \cite{VV08} for Gaussian process priors). Notably, those upper bounds \emph{suggest} that while Laplace priors can achieve the minimax rate of estimation over spatially inhomogeneous Besov bodies, Gaussian priors might be limited by the above-mentioned slower linear minimax rate. We also mention two recent preprints which respectively consider Laplace priors for stochastic diffusion models with Sobolev ground truth \cite{GR21} and spike-and-slab as well as random tree-type priors for `direct' nonparametric regression under spatially varying H\"older smoothness \cite{RR21}.


\subsubsection{Contributions}
A main contribution of our paper is to extend the contraction results for Laplace priors over inhomogeneous Besov bodies from \cite{ADH21} to (non-linear) inverse problems. We do this under a general local Lipschitz condition for the forward map $\mathcal G$, further relaxing regularity assumptions  imposed in previous (Gaussian and Sobolev) literature \cite{NVW18, N20}; see Theorem~\ref{thm:gen-contraction}  below as well as the discussion preceding it. Those conditions are satisfied for a range of representative non-linear forward maps, including for instance the \textit{Darcy flow} model from groundwater geophysics \cite{S10, Y86} where $\mathcal G(F)$ is given by the (unique) solution $u\equiv u_F$ to the boundary value problem 
\begin{equation}\label{div-intro}
    \nabla \cdot (\exp(F) \nabla u) = g ~\text{on}~\mO,~~~~~ u=0 ~\text{on}~\partial \mO. 
\end{equation}
Here, $\mathcal O\subseteq \R^d$ ($d\ge 1$) is a bounded and smooth domain, $\nabla \cdot$ denotes divergence and $g:\mathcal O\to (0,\infty)$ is a \textit{known}, smooth and positive source function, see Section~\ref{sec:pderes} for details. While we focus on (\ref{div-intro}) and one other representative example arising from an (elliptic) \textit{steady-state Schr\"odinger equation}, we expect that the conditions laid out in Section~\ref{sec:genres} are indeed satisfied by a far larger class of forward models.

We also address the following complementary question about lower bounds: \textit{Can Gaussian process priors (GPPs) achieve the same rates of contraction as Laplace priors over spatially inhomogeneous Besov function classes?} In other words, one asks whether the $\ell^1$-type penalty structure implicit in Laplace-type wavelet priors is more compatible, in an actual information theoretic sense, with $B^s_{pq} ~(1\le p<2)$ than commonly used Gaussian random fields. Theorem~\ref{thm-lb} below answers this question in the affirmative, in a setting with direct observations ($\mathcal G\equiv \textnormal{Id}$). Specifically, we find that any contraction rate for a sequence of GPP's, that is `uniform' over $B^s_{pq}$ bodies, is limited by the linear minimax rates from \cite{DJ98}. This confirms that the gap between the upper bounds with Gaussian and Laplace priors in \cite{ADH21} is indeed unavoidable, see Theorem 5.9 and Remark 5.10 there.



Our main results are stated in Sections~\ref{sec:genres} (upper bounds for general $\mathcal G$),~\ref{sec:pderes} (upper bounds for PDE models) and~\ref{sec-LB} (lower bounds). Section~\ref{sec:proofsfor2} contains the proofs for the upper bounds for general $\mathcal G$, while Section \ref{sec:prior} includes a summary of key measure-theoretic properties of Laplace priors. The remaining proofs, additional background material and some technical results on Besov spaces can be found in the Supplement.

\subsubsection{Proof ideas}

The structure of our proofs in Section~\ref{sec:proofsfor2} follows the general approach first laid out in \cite{GGV00} (see also the monographs \cite{GV17,GN16}) of demonstrating (a) that the prior satisfies exponential `prior mass around the true parameter' as well as `sieve set mass' inequalities, and (b) that sufficiently $L^2$-separated elements of the sieve set (equal to an appropriate enlargement of a Besov body here) can be tested against the ground truth, in a uniform manner. In our setting, we verify the former conditions using concentration results for log-concave product measures due to Borell \cite{CB74} and Talagrand \cite{T94}. To obtain the necessary tests, we employ novel concentration inequalities for $\ell^1$-type  penalised least squares estimators for $\mathcal G(F)$, adapting metric entropy-based techniques from $M$-estimation \cite{VDG00,V01} (similarly to \cite{NVW18}) to the present non-linear regression setting with locally Lipschitz $\mathcal G$ -- see Theorem~\ref{thm:plsrbddnorm} below. We also note that though the domains of forward operators $\mathcal G$ considered are generally unbounded, due to the `compactifying' effect of the prior distribution our results essentially only rely on \textit{local} forward and `backward stability' estimates on $\mathcal G$, allowing for an arbitrary scaling of the local Lipschitz constants for parameters far from the origin. In particular, this permits a more flexible choice of parameterisation (and of `link function'), in our main PDE examples, see Section~\ref{sec:pderes}. In contrast, most existing results do rely on an explicit quantitative (polynomial) control of relevant non-linearities, see, e.g., \cite{NVW18,GN20}.

Our lower bounds are proved by relating Gaussian process priors with the minimax estimation theory over Besov bodies \cite{DJ98} mentioned above. To do so in a rigorous manner, we require a \textit{uniform} notion of contraction rates, as formalized in (\ref{unif-contr}). While contraction rates in the literature are not generally formulated this way, the underlying proof techniques do usually permit such `stronger' uniform bounds, as demonstrated in Theorem~\ref{thm:gen-contraction} and its proof below. Our techniques stand in contrast to \cite{IC08} where lower bounds for contraction rates are derived for certain Gaussian priors by use of explicit properties of their `concentration function'; see Sections~\ref{subsec-contr} and~\ref{sec-LB} for more discussion.

\subsubsection{Outlook}
While Bayesian numerical computation can be expected to be challenging in the current PDE setting (due to the non-log-concavity of the posterior distribution), significant advances were made during the last decade in overcoming such computational barriers. For various formulations of MCMC methodologies in high- and infinite-dimensional settings we refer to the papers \cite{CRSW13,BGLFS17,CLM16} and \cite{CDPS18} where Laplace priors are considered, as well as references therein. There is also a growing literature in deriving rigorous theoretical `mixing' guarantees for those MCMC schemes, see, e.g., \cite{D17,DM19, HSV14, NW20, RS18}. The recent work \cite{NW20} derives polynomial-time convergence guarantees for high-dimensional sampling problems arising in the Schr\"odinger model studied in Section~\ref{sec-schr} below, when Gaussian priors are utilized. Conceivably, the approach taken in \cite{NW20} can be extended to obtain polynomial-time computation bounds for Laplace priors -- here, however, we leave this topic to be explored in future work. Similarly, the computation of MAP-type estimators can be challenging in non-linear inverse problems due to the non-convexity of the objective, and the development of rigorous computational guarantees is an ongoing research effort (see, e.g., \cite{NW20} as well as  \cite{EHN96, I06} for classical iterative methods).

Note that our convergence results below do require the true parameter to satisfy a minimum Besov-smoothness of at least $\alpha > d+1$. In particular, our results do not cover the `low-regularity' space $B^1_{11}$ associated to total variation \cite{DJ98, LSS09}. The main reasons are rooted in our use of classical elliptic PDE theory \cite{GT98} which requires H\"older-type smoothness of the coefficients, but also, secondarily, for standard metric entropy integrals for function spaces, employed to study the convergence of the direct problem, to converge. Nevertheless, the  spaces $B^\alpha_{11},~\alpha>1$, still capture meaningful spatially inhomogeneous behaviour, in that they penalize (higher-order) variations in an $L^1$-sense. A careful separate study of the low-regularity regime poses an interesting challenge for future research.

See also the Remarks~\ref{rem:scaling},~\ref{rem:minimax} and~\ref{rem:trunc} below for more discussion of possible future directions which relate to the use of `truncated' or `non-rescaled' priors, and minimax optimality.

\subsubsection{Basic notation and function spaces}
For any (nonempty) open, Borel subset $\mathcal O\subseteq \R^d$ ($d\ge 1$) with smooth boundary $\partial O$, let $L^2(\mathcal O)$ denote the (Lebesgue) square integrable functions on $\mathcal O$. We write $C(\mathcal O)$ for the space of bounded, continuous functions on $\mathcal O$, equipped with the sup-norm $\|\cdot\|_\infty$. For any integer $b\ge 1$, we write $C^b(\mathcal O)$ for the space of $b$ times continuously differentiable functions; $C_c^\infty(\mathcal O)$ denotes the smooth functions compactly supported in $\mathcal O$. For non-integer $b>0$, we denote by $C^b(\mathcal O)$ the usual $b$-regular spaces of H\"older-continuous functions.

For integer $s\ge 0$, let $H^s(\mathcal O)$ denote the space of $s$ times weakly differentable $L^2(\mathcal O)$ functions with derivatives in $L^2(\mathcal O)$, normed by
\[ \|F\|_{H^s(\mathcal O)}^2:=\sum_{|b|\le s} \|\partial^b F\|_{L^2(\mO)}^2, \]
and for non-integer $s>0$ we may define $H^s(\mO)$ by interpolation, see, e.g., \cite{LM72}. We moreover need the following subspace of $H^s(\rd)$-functions supported in $\bar{\mO}$,
\begin{equation}\label{eq:htilde} \tilde H^s(\mO) := \{ f\in H^s(\rd) : \supp (f)\subseteq \bar{\mO}\}.\end{equation}
For any Banach space $X$, we denote by $X^*$ its topological dual space $\{L:X\to \R~\text{linear and bounded} \}$, equipped with the operator norm. For $s>0$, we then define the negative order Sobolev spaces as $H^{-s}(\mO)=(\tilde H^{s}(\mO))^*$. For $s\ge 0$ and $p,q\in [1,\infty]$, we denote by $B^s_{pq}(\mO)$ the usual $s$-regular space of Besov functions on $\mathcal O$; we refer to Section B in the Supplement as well as \cite{ET08}, p.57, for more detailed definitions. For $p=q=\infty$, and non-integer $s>0$, we have $C^s(\mathcal O)=B^s_{\infty\infty}(\mathcal O)$, see, e.g., Section 4.3.4 in \cite{T83}. We will frequently use the (continuous) embedding $B^\alpha_{11}(\mathcal O)\subseteq C^b(\mathcal O)$ which holds for any $0<b<\alpha-d$; i.e. there exists some $C>0$ such that
\begin{equation}\label{embedding}
    \|f\|_{C^b(\mathcal O)}\leq C\|f\|_{B^\alpha_{11}(\mathcal O)}, ~~~\forall f\in B^\alpha_{11}(\mathcal O).
\end{equation}
[For non-integer $b$ this is a standard property of Besov spaces, and for integer $b$ we obtain this by choosing some non-integer $b'\in(b,\alpha-d)$ and observing that $B^\alpha_{11}(\mathcal O)\subseteq C^{b'}(\mathcal O)$.]

Finally, we use $\lesssim, \gtrsim$ and $\simeq$ respectively to denote one- and two-sided inequalities which hold up to multiplicative constants.




\section{Results with general forward maps in prediction risk}\label{sec:genres}

In this section, we first outline the construction of $B^\alpha_{11}$-Besov priors and other key preliminaries. We subsequently state a main result, Theorem~\ref{thm:gen-contraction}, about posterior contraction for $\mG(F)$ around $\mG(F_0)$. Section~\ref{sec:PLS} is devoted to convergence rates for penalized least squares estimators, see Theorem~\ref{thm:plsfr}.


\subsection{Statistical inverse problems and Besov-type prior distributions}\label{sec:fm}
Throughout this section, we fix integers $d,d'\ge 1$, an open (nonempty), bounded domain $\mO\subset \R^d$ with smooth boundary $\partial\mO$ and a Borel subset $\mathcal D\subseteq \R^{d'}$. For a measurable subset $\vi\subset \ltwo$, suppose that
\begin{equation}\label{eq:forwardmap}
\mG:\vi\to \ltwod
\end{equation}
is a given (measurable) `forward' map. For noise level $\eps>0$, a centered $L^2(\mathcal D)$-Gaussian white noise process $\W$ and any (unknown) $F\in\vi$, we assume that the data arise as a realisation of the stochastic process $(Y_\eps(\psi):\psi\in\ltwod)$ given by
\begin{equation}\label{eq:model}
Y_\eps=\mG(F)+\eps\W.
\end{equation}
Under mild regularity assumptions on the `regression functions' $\mathcal G(F)$, the observation scheme (\ref{eq:model}) constitutes a mathematically convenient, and asymptotically equivalent (in a Le Cam sense of statistical experiments) continuous limit for observing $\mathcal G(F)$ at order $\eps^{-2}$ many `equally spaced' points across $\mathcal D$, see, e.g., \cite{BL96,R08}. We note that $L^2(\mathcal D)$ and $\W$ here could generally be replaced by any separable Hilbert space $\H$ along with an `iso-normal' white noise process for $\H$ (akin to Section 2 in \cite{NVW18}), with essentially no changes in the proofs. We refrain from this generalization for ease of presentation. 



The law of $Y_\eps|F$ from (\ref{eq:model}) is denoted by $P^\eps_F$. By the Cameron-Martin theorem \cite[Proposition 6.1.5]{GN16}, the law $P^\eps_\W$ of $\eps\W$ is a common dominating measure for the laws arising from (\ref{eq:model}), and we may thus define the \emph{likelihood function}
\begin{equation}\label{eq:likelihood}
p^\eps_F(y)\coloneqq\frac{dP^\eps_F}{dP^\eps_\W}(y)=\exp\Big(\frac1{\eps^2}\ip{y}{\mG(F)}_{\ltwod}-\frac1{2\eps^2}\norm{\mG(F)}_{\ltwod}^2\Big).
\end{equation}
In the Bayesian approach one employs a prior (Borel) probability measure $\Pi$ supported on $\vi$ to model the unknown parameter $F$, and inference on $F$ is based on the \emph{posterior distribution} $\Pi(\cdot|Y_\eps)$ of $F|Y_\eps$. The latter is determined by Bayes' formula [see, e.g., p.7 in \cite{GV17}]
\begin{equation}\label{eq:post}
\Pi(B|Y_\eps)=\frac{\int_Bp^\eps_F(Y_\eps)d\Pi(F)}{\int_\vi p^\eps_F(Y_\eps)d\Pi(F)},~~~\textnormal{for any Borel set}~B\subseteq \vi.
\end{equation}


In this paper we consider \emph{Besov-type} prior distributions on $L^2(\mathcal O)$ which arise as random wavelet series expansions with Laplace-distributed weights. Specifically, consider an orthonormal basis $\Psi$ of $L^2(\R^d)$, consisting of `sufficiently smooth', compactly supported wavelets -- e.g., we may choose $\Psi$  to consist of Daubechies tensor wavelets of regularity $S>0$, see, e.g., \cite{D92,M92} or also \cite{N20}. [In what follows we will tacitly assume $S>\alpha$, where $\alpha$ denotes the smoothness parameter below.] We further denote by \begin{equation}\label{eq:subcol}\subcol= \{\psi\in \Psi:\supp(\psi)\cap \mathcal O \neq \emptyset \}= \{\psi_{kl}\}_{k\ge1, 1\le l\le L_k}\end{equation}
the sub-basis of wavelets whose support  has nonempty intersection with $\mO$, which we have enumerated at each level $k\ge 1$ by the index $1\le l \le L_k$. Since $\mathcal O$ is bounded and open, $L_k$ satisfies $L_k\simeq 2^{dk}$. Despite this sub-collection not necessarily forming an orthonormal system in $\ltwo$ (since some wavelets may have support with nonempty intersection with both $\mO$ and $\R^d\setminus\mO$), any square integrable function $F$ supported on $\mO$ can be uniquely represented as a sum
\begin{equation}\label{eq:wavexp}
F=\sum_{k=1}^\infty\sum_{l=1}^{L_k}F_{kl}\psi_{kl},~~~ \text{where}~ F_{kl}=\ip{F}{\psi_{kl}}_{\ltwo}.
\end{equation}

The following definition is similar to previous constructions of Besov-type priors \cite{LSS09, DHS12, ABDH18} on bounded intervals and the $d$-dimensional torus.

\begin{defin}[$B^\alpha_{11}$-Besov prior on $\ltwo$]\label{def:prior}
For integer regularity level $\alpha>d$ and i.i.d. univariate standard Laplace random variables $\{\xi_{kl}\}_{k\ge 1, 1\le l \le L_k}$, let
\begin{equation}\label{eq:Ftilde}
	\tf=\sum_{k=1}^\infty\sum_{l=1}^{L_k} 2^{(\frac{d}2-\alpha)k} \xi_{kl} \psi_{kl}.
\end{equation}
We denote the law of $\tf$ by $\tp$. Next, given any compact subset $K\subset \mO$ (which will be chosen below), fix some smooth cut-off function $\chi\in C_c^\infty(\mO)$ satisfying $\chi=1$ on $K$. Then, for some scalar $\scaling>0$ (also to be chosen later), the $B^\alpha_{11}$-Besov prior distribution is given by
\begin{equation}\label{prior}
\Pi=\mL(\scaling \chi \tf), ~~ \tf\sim \tp.
\end{equation}
\end{defin}
The compact subset $K\subset \mathcal O$ and cut-off function $\chi$ are employed to deal with well-known intricacies with wavelet representations at the boundary $\partial O$ (cf. \cite{GN20,T08}). In addition to the regularity level $\alpha$, here $\rho>0$ is a scaling parameter which allows to calibrate the spread of the prior distribution. Note that the assumption $\alpha>d$ ensures the summability of the sequence $(L_k 2^{(d-2\alpha)k}:k\ge 1)$ such that $\Pi$ is supported on $\ltwo$. Under the stricter assumption $\alpha>1+d$, it follows from Lemma \ref{lem:supp} below that
\begin{equation}\label{eq:Cbetasupp}
\Pi(\cbee)=1,
\end{equation}
for any integer $b$ such that $1\leq b<\alpha-d$, and for any $\rho>0$. In fact, below we choose $\rho$ as an appropriate function of the noise level $\eps$, which secures that the prior concentrates on a fixed ball of $\cbee$; see the proof of Lemma~\ref{lem:forwardpriormass}.

\subsection{Posterior contraction rates for locally Lipschitz forward maps}\label{subsec-contr}

Our first main result, Theorem~\ref{thm:gen-contraction} below, regards contraction properties for the forward-level posterior distribution on $\mathcal G(F)$ around $\mathcal G(F_0)$. Its main hypotheses on the prior measure, ground truth and forward map~$\mG$ are summarized in Assumptions~\ref{ass:truth} and~\ref{ass:kgb}.

\begin{ass}[Prior and ground truth]\label{ass:truth} We assume the following.
\begin{enumerate}[label=\textbf{\roman*)}]
\item Let $\alpha > 1+d$ integer and let $K\subseteq \mathcal O$ be a compact subset. Suppose that data is given by $Y_\eps$ from (\ref{eq:model}) with some ground truth parameter $F_0\in \ba\cap \vi$ with $\supp(F_0)\subseteq K$.


\item For those choices of $\alpha, K$, some cut-off function $\chi\in C^\infty_c (\mathcal O)$ with $\chi\equiv 1$ on $K$, and some integer $\kappa \ge 0$, let $\Pi_\eps$ be the $B^\alpha_{11}$-Besov prior from Definition~\ref{def:prior}, with scaling constant
\begin{equation}\label{eq:deltaeps}
\scaling = \eps^2/\de^2 = \eps^{\frac{2d}{2\kappa+2\alpha+d}}, ~~\text{where}~\delta_\eps=\eps^\frac{2\kappa+2\alpha}{2\kappa+2\alpha+d}.
\end{equation}
\end{enumerate}

\end{ass}
The above assumption, together with \eqref{embedding}, implies that $F_0\in \cbee$, for $1\le b<\alpha-d$. The next assumption entails that the forward map $\mathcal G$ is locally Lipschitz with respect to a negative order Sobolev norm. An integer parameter $\kappa\ge 0$ in (\ref{eq:local-lip}) encapsulates the `degrees of smoothing' of the forward map (and $\kappa$ in the above assumption will then be chosen as that constant).



\begin{ass}[Forward regularity]\label{ass:kgb} For $\chi$ and $\alpha$ as in Assumption~\ref{ass:truth} and  some integer $1\leq \beta<\alpha-d$, suppose the domain $\vi$ of the forward map satisfies
	\begin{equation}\label{domain-lb-req}
		\vi\supseteq C^\beta_\chi(\mathcal O) := \big\{ F\in C^\beta(\mathcal O) : \supp(F)\subseteq \supp(\chi) \big\}.
	\end{equation}
	Moreover, assume that for some integer $\kappa\ge 0$ and any $R>0$ there exists $C_R>0$ such that
	\begin{equation}\label{eq:local-lip}
		\norm{\mG(F_1)-\mG(F_2)}_{\ltwod}\le C_R\norm{F_1-F_2}_{\hk},
	\end{equation}
    for any $F_1, F_2\in C^\beta_\chi(\mathcal O)$ with $\|F_1\|_{\cb}\vee\|F_2\|_{\cb}\le R$.
\end{ass}

The requirement (\ref{domain-lb-req}) will ensure that the local Lipschitz estimate (\ref{eq:local-lip}) is satisfied on a sufficiently large `sieve set' carrying the bulk of the prior (and posterior) mass; the estimate (\ref{eq:local-lip}) in turn will guarantee that the statistical complexity (i.e. metric entropy) of the set of induced regression functions $\{\mathcal G(F)\}_F$ can be suitably controlled. The assumption (\ref{eq:local-lip}) generalizes similar (but more restrictive) local Lipschitz assumptions requiring a \textit{polynomial} dependence of $C_R$ on $R$ which have been utilized in \cite{NVW18} to prove convergence of penalized-least-squares estimators with Sobolev penalties and in \cite{GN20} for studying rates of contraction under Gaussian priors.




\begin{thm}\label{thm:gen-contraction}
	Suppose that Assumptions~\ref{ass:truth} and~\ref{ass:kgb} are fulfilled. Then there exist constants $L,C_1$ and $\mu>0$ such that for all $\eps>0$ small enough,
	\begin{equation}\label{eq:forwardcontraction}
	 P_{F_0}^\eps\big(\pe\big( F: \|\mG(F)- \mG(F_0)\|_{\ltwod} \ge L\delta_\eps \,\big|\, Y_\eps \big)\ge e^{-\delta_\eps^2/\eps^2} \big)\le C_1\eps^{\mu}.
	\end{equation}
	Furthermore, for any $1\leq b<\alpha-d$ and sufficiently large $M,C_2>0$, we have that
	\begin{equation}\label{eq:Cbconcentration}
	P_{F_0}^\eps\big(\pe\big(F:\norm{F}_{\cbee}>M\,\big|\, Y_\eps \big)\ge e^{-\delta_\eps^2/\eps^2} \big)\le C_2\eps^{\mu}.
	\end{equation}
	The above constants only depend on $F_0$ through $\|F_0\|_{B^\alpha_{11}}$; in particular they can be chosen uniformly over $F_0\in \vi \cap \{F\in B^\alpha_{11}: \|F\|_{B^{\alpha}_{11}}\le r,~\textnormal{supp}(F)\subseteq K \}$, $r>0$.
\end{thm}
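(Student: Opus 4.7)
The plan is to apply the general posterior contraction theorem (e.g.\ Theorem~7.3.1 in \cite{GN16}), which reduces the proof of (\ref{eq:forwardcontraction}) to three ingredients: \textbf{(a)} a small-ball prior mass estimate $\pe(F: \|\mG(F) - \mG(F_0)\|_{\ltwod} \le \delta_\eps) \ge e^{-c \delta_\eps^2/\eps^2}$; \textbf{(b)} a sieve $\mAe \subseteq \vi$ with $\pe(\mAe^c) \le e^{-c' \delta_\eps^2/\eps^2}$, $c'$ as large as needed; and \textbf{(c)} exponentially consistent tests $\Psi_\eps$ for $H_0: F = F_0$ against $H_1 = \{F \in \mAe : \|\mG(F) - \mG(F_0)\|_{\ltwod} \ge L \delta_\eps\}$. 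For \textbf{(a)}, Assumption~\ref{ass:kgb} reduces the KL-type neighbourhood, on any $\cb$-bounded set, to a small-ball event of the form $\{\|F - F_0\|_{\hk} \le c\delta_\eps\}$. The scaling $\scaling = \eps^2/\delta_\eps^2$ in Definition~\ref{def:prior} is calibrated precisely so that, after decomposing $F - F_0$ into a deterministic bias (from truncating $F_0$'s wavelet expansion) and a stochastic remainder, Borell--Talagrand type lower bounds for log-concave product measures, collected in Section~\ref{sec:prior}, deliver the target estimate.

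For \textbf{(b)}, I take $\mAe$ of the form $\{F: \|F\|_{\ba} \le M_1\} \cap \{F: \|F\|_\cbee \le M_2\}$ with $M_1, M_2$ sufficiently large; the exponentially small complement mass follows from (\ref{eq:Cbetasupp}) combined with log-concavity concentration (Borell's inequality) for the rescaled prior. This simultaneously yields the deterministic $\cbee$-norm bound underlying (\ref{eq:Cbconcentration}). For \textbf{(c)}, the cleanest route is to invoke Theorem~\ref{thm:plsfr}: its exponential concentration of the $\ell^1$-penalized least squares estimator $\Fpls$ around $\mG(F_0)$ at the prediction rate $\delta_\eps$ directly yields plug-in tests $\Psi_\eps = \mathbf{1}\{\|\mG(\Fpls) - \mG(F_0)\|_{\ltwod} \ge L\delta_\eps/2\}$ satisfying both the Type-I error bound $\E_{F_0}^\eps \Psi_\eps \le e^{-c \delta_\eps^2/\eps^2}$ and a uniform Type-II error bound over $H_1$. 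Feeding \textbf{(a)}--\textbf{(c)} into the standard machinery yields (\ref{eq:forwardcontraction}), while (\ref{eq:Cbconcentration}) follows from \textbf{(b)} combined with a routine posterior integration against the evidence lower bound from \textbf{(a)}.

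The uniformity over $F_0 \in \{F: \|F\|_\ba \le r, \supp(F) \subseteq K\}$ threads through each ingredient: the truncation-based bias bound in \textbf{(a)} depends on $F_0$ only via $\|F_0\|_\ba$; the sieve in \textbf{(b)} is $F_0$-independent; and the PLS concentration in Theorem~\ref{thm:plsfr} will be proved uniformly over this class. The main obstacle is \textbf{(c)}, that is, Theorem~\ref{thm:plsfr} itself: because $\mG$ is \emph{only} locally Lipschitz on an unbounded domain, classical non-linear $M$-estimation techniques do not apply verbatim. The remedy, to be carried out in Section~\ref{sec:PLS}, will be a two-step argument: first, the $\ell^1$ penalty together with the embedding (\ref{embedding}) automatically localizes $\Fpls$ to a deterministic $\cb$-ball with overwhelming probability; second, on this event one may invoke (\ref{eq:local-lip}) to transfer metric entropy bounds on $\ba$-balls to the image class $\{\mG(F): F \in \mAe\}$ in $\ltwod$, enabling a standard peeling argument in the spirit of \cite{VDG00,NVW18}.
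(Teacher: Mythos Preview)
Your overall strategy is correct, but two of the three ingredients break as stated.

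\textbf{Sieve (b).} The set $\{F:\|F\|_{\ba}\le M_1\}$ has $\pe$-measure \emph{zero}: by Lemma~\ref{lem:supp}, draws from $\tp$ (and hence from $\pe$) almost surely do not belong to $B^\alpha_{11}$, only to $B^b_{11}$ for $b<\alpha-d$. Thus your proposed sieve has complement of full mass, not exponentially small mass, and no Borell-type inequality can repair this. The paper's sieve $\mAe$ (Lemma~\ref{lem:sieve}) is more delicate: it consists of functions admitting a decomposition $F=F_1+F_2$ with $\|F_1\|_{\hk}\le\delta_\eps$ and $\|F_2\|_{\ba}\le M^2$, together with a $\cad$-bound on $F$. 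The required mass estimate then relies on Talagrand's two-level concentration inequality for log-concave product measures (Lemma~\ref{lem:tal}), not on a one-norm Borell bound.

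\textbf{Tests (c).} You propose to invoke Theorem~\ref{thm:plsfr}, but that result assumes the quantitative $(\kappa,\gamma,\beta)$-regularity (\ref{eq:kgb}) with \emph{polynomial} growth of the local Lipschitz constant, whereas Theorem~\ref{thm:gen-contraction} only assumes the weaker (\ref{eq:local-lip}) with arbitrary $C_R$. Under merely (\ref{eq:local-lip}), your proposed self-localization argument (``the $\ell^1$ penalty localizes $\Fpls$ to a $\cb$-ball'') fails: the entropy bound $\psast(\lambda,R)$ in Lemma~\ref{lem:psast} would depend on $R$ through $C_{R^2/\lambda}$ rather than polynomially, so $\psast(\lambda,R)/R^2$ need not be non-increasing and Proposition~\ref{thm:genplsfr} cannot be applied over the unrestricted set $\tv$. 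The paper instead restricts the PLS objective a priori to a fixed $\ba$-ball $\tv_B$ (Theorem~\ref{thm:plsrbddnorm}), with $B$ chosen large enough to contain both $F_0$ and the $F_2$-component of every $F\in\mAe$; on $\tv_B$ a single Lipschitz constant suffices and the entropy argument goes through. The Type-II error is handled by taking $F_*=F_2$ and $F_\dagger=F$, using that $\|\mG(F_2)-\mG(F)\|_{\ltwod}\lesssim\|F_1\|_{\hk}\lesssim\delta_\eps$ uniformly over $F\in\mAe$.
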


The rate of convergence $\de$ corresponds to the minimax estimation rate for (directly observed) $\alpha+\kappa$-smooth functions (in particular, for functions in $\bak$) under $L^2$-loss, see, e.g., \cite{DJ98} or \cite{DD95}. This is in accordance with Assumption~\ref{ass:kgb} which requires that $\mG$ is locally `$\kappa$-smoothing', while permitting arbitrary growth of the local Lipschitz constant as a function of the radius of the bounded set.

The contraction rate statement (\ref{eq:forwardcontraction}) differs from the existing literature \cite{GGV00, GN16,GV17} in that it aims to quantify `non-asymptotically' all terms that depend on the noise-level. As an immediate consequence of Theorem~\ref{thm:gen-contraction}, we also obtain the more `classical' asymptotic statements
\begin{equation}\label{eq:standard}\pe\big( F: \|\mG(F)- \mG(F_0)\|_{\ltwod} \ge L\delta_\eps \,\big|\, Y_\eps \big) = O_{P_{F_0}^\eps}\big( e^{-\delta_\eps^2/\eps^2} \big) = o_{P_{F_0}^\eps}(1). \end{equation}
However, the stronger assertion from (\ref{eq:forwardcontraction}) is granted by the same proof techniques as those typically used for establishing \eqref{eq:standard}; see Section~\ref{sec:24proof} for details. The quantitative bound allows us also to assert uniformity over Besov balls, which is crucial for controlling the worst-case risk in Theorem~\ref{thm-lb} in order to rigorously assert that Laplace priors outperform Gaussian priors over the spatially inhomogeneous $B^\alpha_{11}$ classes.

\begin{rem}[Smoothness and scaling of prior]\normalfont\label{rem:scaling}
    Draws from the prior distribution chosen in Theorem~\ref{thm:gen-contraction} almost surely do not belong to the space in which the truth is assumed to live, $B^\alpha_{11}$, but only to $B^{b}_{11},~ b<\alpha-d$, see Lemma \ref{lem:supp} below. In this sense, the prior is `undersmoothing' which heuristically is counteracted by the rescaling $\rho\to0$ in (\ref{eq:deltaeps}). The space $B^\alpha_{11}$ is closely related to the prior 
    via its concentration and absolute continuity properties, cf. Lemma \ref{lem:shsb} (and surrounding discussion) below, and in this sense generalizes the Reproducing Kernel Hilbert Space (RKHS) of Gaussian priors, cf. \cite{VV08, ADH21}. This undersmoothing property of the prior contrasts earlier results for Gaussian process priors in nonparametric regression and linear inverse problems under $\alpha$-Sobolev smoothness of the truth, where minimax rates could be obtained for Gaussian priors with RKHS-norm $\|\cdot\|_{H^{\alpha+d/2}}$ (that is '$\alpha$-smooth' priors, whose draws are almost surely in $H^{\alpha-\eta}$ for any $\eta>0$) without rescaling, cf. \cite{VV08, KVV11}.
    
    Both in the context of non-linear inverse problems, as well as for Laplace-type priors in direct models under $B^\alpha_{11}$-regularity of the truth, the necessity of choosing such undersmoothing and rescaled prior distributions has been observed; see, e.g. \cite{ADH21, GN20, NVW18}. Whether this is an artifact of the proof techniques at hand, is an interesting open question for future research.
\end{rem}

\subsection{Convergence rates for penalized least squares estimators}\label{sec:PLS}
We now turn to studying the concentration properties of penalized-least-squares (PLS) estimators with $\ell_1$-type Besov-norm penalty. These estimators arise naturally as Maximum a Posteriori (MAP) estimates from Besov priors \cite{ABDH18}. While the results of this section are of interest independently of the contraction rates studied above, their proof techniques are crucially used to obtain the statistical testing bounds required for posterior contraction rates, see the proof of Theorem~\ref{thm:gen-contraction} in Section~\ref{sec:24proof} and Theorem~\ref{thm:plsrbddnorm} in Section~\ref{sec:proof26}.

Though the regularity assumptions in this section will slightly differ from those above, we again assume that some forward map $\mathcal G:\vi\to L^2(\mathcal D)$ is given, where $\vi\subseteq L^2(\mathcal O)$ denotes its domain. For an arbitrary and fixed compact subset $\kmap\subseteq \mO$, we denote
\begin{equation}\label{eq:vi}
\tv\coloneqq \{F\in B^\alpha_{11}(\mO):\supp(F)\subseteq \kmap \},
\end{equation}
and we assume that $\tv\subseteq\vi.$ Since the space $\tv$ consists of functions which are compactly supported in $\mathcal O$, it follows from standard characterisations of Besov spaces that on $\tv$, the ${\ba}$-norm is equivalently characterized by the wavelet sequence norm 
\begin{equation}\label{eq:tomanorm}
\norm{h}_{\toma}:=\sum_{k=1}^\infty2^{(\alpha-\frac{d}2)k}\sum_{l=1}^{L_k} |h_{kl}|,~~~ h\in \tv,
\end{equation}
where $h_{kl}=\langle h,\psi_{kl}\rangle_{L^2}$ denotes the wavelet coefficients arising from the collection of wavelets $\Psi_{\mathcal O}$ from Section~\ref{sec:fm}. In particular, $\tv\subseteq \{h\in L^2(\mathcal O): \|h\|_{\toma}<\infty \}$. [See also display \eqref{eq:tom} for a definition of the sequence space $\tom=\toma$ which is intimately connected to the concentration properties of the $B^\alpha_{11}$-Besov prior from Definition~\ref{def:prior}.]

For data $Y_\eps$ arising from the white noise regression model (\ref{eq:model}) and recalling its likelihood function (\ref{eq:likelihood}), we consider estimators $\Fpls$ arising as maximisers of the penalized likelihood objective
\begin{equation}\label{Jdef}
\jle:\tv\to\R,\quad \jle(F)\coloneqq 2\ip{Y_\eps}{\mG(F)}_{\ltwod}-\norm{\mG(F)}^2_{\ltwod}-\lambda\norm{F}_{\toma}.
\end{equation}
Here $\lambda>0$ is a regularization parameter to be chosen. 


Akin to \cite{NVW18, GN20} we make the regularity assumption on the forward map $\mG$, that there exists $C>0$ and integers $\kappa, \gamma, \beta\ge 0$ such that for all $F_1, F_2\in \tv \cap \cb$,
\begin{equation}\label{eq:kgb}
	\norm{\mG(F_1)-\mG(F_2)}_{\ltwod}\le C \big(1+\norm{F_1}^\gamma_{\cb}\vee\norm{F_2}^\gamma_{\cb} \big)\norm{F_1-F_2}_{\hk}.
\end{equation}
When (\ref{eq:kgb}) is satisfied we say that $\mG$ is \emph{$(\kappa, \gamma, \beta)$-regular}. Note that (\ref{eq:kgb}) requires a more quantitative control over the local Lipschitz constant of $\mathcal G$ than (\ref{eq:local-lip}). For any $\lambda>0$, $F_1\in \tv$ and $F_2\in \vi$, we define the functional
\begin{equation}\label{eq:tals}
\tals(F_1,F_2)\coloneqq \norm{\mG(F_1)-\mG(F_2)}_{\ltwod}^2+\lambda\norm{F_1}_{\toma},
\end{equation}
whose concentration properties are the subject of the following main theorem.

\begin{thm}\label{thm:plsfr}
Let $\alpha, \kappa, \gamma, \beta \ge 0$ be integers such that \[\alpha>\beta+d,~~\alpha \ge d/2+d\gamma-\kappa,\]
and such that the forward map $\mG:\vi\to \ltwod$ is $(\kappa,\gamma,\beta)$-regular in the sense of (\ref{eq:kgb}). Suppose that data arise as $Y_\eps\sim P^\eps_{F_\dagger}$ for some fixed $F_\dagger\in\vi$ and let $\de$ be given by \eqref{eq:deltaeps}.
\begin{enumerate}[label=\textbf{\roman*)}]
\item For all $\lambda,\eps>0$, almost surely under $P_{F_\dagger}^\eps$, there exists a maximizer $\Fpls$ of $\mathcal J= \jle$ over $\tv$, satisfying
\[ \mathcal J(\Fpls)= \sup_{F\in \tv} \mathcal J(F). \]
\item There exist large enough constants $C,C',M>0$ (independent of $F_\dagger\in\vi$) such that with
\[\lambda=C\de^2,\]
we have that for any $0<\eps<1$, $L\ge M$, $F_\ast\in\tv$ and any $\Fpls\in \arg\max_{F\in \tv}\mathcal J (F)$,
\begin{equation}\label{eq:concbd}
P^\eps_{F_\dagger}\big(\tals(\Fpls,F_\dagger)\geq 2(\tals(F_\ast, F_\dagger)+L^2\de^2)\big)\leq C'\exp\Big(-\frac{L^2\delta_\eps^2}{C'\eps^2}\Big).
\end{equation}
In particular, there exists a universal constant $C''>0$ such that 
\[E^\eps_{F_\dagger}[\tals(\Fpls, F_\dagger)]\leq C''(\tals(F_\ast, F_\dagger)+\de^2).\]
\end{enumerate}
\end{thm}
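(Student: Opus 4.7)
The plan splits along the two parts of the theorem.

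For Part (i), I would invoke the direct method in the calculus of variations on an almost sure event. Using the decomposition $\mathcal{J}(F)=\jle(F)=-\|\mG(F)-\mG(F_\dagger)\|_{\ltwod}^2+2\eps\langle\W,\mG(F)\rangle_{\ltwod}-\lambda\|F\|_{\toma}+\|\mG(F_\dagger)\|_{\ltwod}^2$, coercivity of $\mathcal{J}$ on $\tv$ follows from the Lipschitz bound (\ref{eq:kgb}), the embedding $\ba\hookrightarrow\cb$ (valid under $\alpha>\beta+d$), and a Young-type splitting of the cross term $2\eps\langle\W,\mG(F)\rangle_{\ltwod}$ against the quadratic data-fit term and the $\ell^1$ penalty. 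The compact embedding $\ba\hookrightarrow\ltwo$ then extracts an $L^2$-limit $F_\infty\in\tv$ along any maximising sequence, continuity of $\mG$ on $\cb$-bounded sets handles the data-fit terms, and weak lower semi-continuity of $\|\cdot\|_{\toma}$ yields $\mathcal{J}(F_\infty)\ge\limsup_n\mathcal{J}(F_n)$, giving a maximiser $\Fpls$.

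For Part (ii), I would start from the basic inequality $\mathcal{J}(\Fpls)\ge\mathcal{J}(F_\ast)$. Substituting $Y_\eps=\mG(F_\dagger)+\eps\W$ and rearranging via $\|a\|^2-\|b\|^2-2\langle c,a-b\rangle=\|a-c\|^2-\|b-c\|^2$ produces
\begin{equation*}
\|\mG(\Fpls)-\mG(F_\dagger)\|^2_{\ltwod}+\lambda\|\Fpls\|_{\toma}\le\|\mG(F_\ast)-\mG(F_\dagger)\|^2_{\ltwod}+\lambda\|F_\ast\|_{\toma}+2\eps\big\langle\W,\mG(\Fpls)-\mG(F_\ast)\big\rangle_{\ltwod}.
\end{equation*}
The remaining task is to show that, with probability at least $1-C'\exp(-L^2\de^2/(C'\eps^2))$, the noise functional obeys
\begin{equation*}
2\eps\big|\langle\W,\mG(F)-\mG(F_\ast)\rangle_{\ltwod}\big|\le\tfrac12\|\mG(F)-\mG(F_\dagger)\|_{\ltwod}^2+\tfrac12\|\mG(F_\ast)-\mG(F_\dagger)\|_{\ltwod}^2+c\lambda(\|F\|_{\toma}+\|F_\ast\|_{\toma})+cL^2\de^2,
\end{equation*}
uniformly over $F\in\tv$. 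Evaluating at $F=\Fpls$ and absorbing $\tfrac12\|\mG(\Fpls)-\mG(F_\dagger)\|^2_{\ltwod}$ into the left-hand side of the basic inequality then produces (\ref{eq:concbd}) with $\lambda=C\de^2$, and integration of the resulting tail over $L\ge M$ delivers the expectation bound.

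The principal difficulty is establishing the uniform noise estimate above, which I would attack by a peeling decomposition $\tv=\bigcup_{j\ge0}S_j$ along $\toma$-shells $S_j=\{F\in\tv:\|F-F_\ast\|_{\toma}\in[2^{j-1}R_0,2^jR_0]\}$ with base radius $R_0\asymp L\de^2/\lambda$. On each shell I would apply Borell--TIS concentration and Dudley's chaining bound to the centred Gaussian process $F\mapsto\langle\W,\mG(F)-\mG(F_\ast)\rangle_{\ltwod}$, whose induced semi-metric is precisely $\|\mG(F)-\mG(F')\|_{\ltwod}$. Combining (\ref{eq:kgb}) with the embedding $\ba\hookrightarrow\cb$, so that $\|F\|_{\cb}\lesssim 2^jR_0$ on $S_j$, reduces the required metric entropy of $\mG(S_j)\subset\ltwod$ to the wavelet entropy of a ball of radius $(2^jR_0)^{\gamma+1}$ in $\toma$ with respect to $\hk$, controlled by the standard estimate $\log N(\eta,\{\|F\|_{\toma}\le1\},\hk)\lesssim\eta^{-d/(\alpha+\kappa)}$. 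The resulting Dudley integral scales with the shell radius as $(2^jR_0)^{\theta}\eps^{1-\theta}$ for an appropriate $\theta\in(0,1)$; the arithmetic hypothesis $\alpha\ge d/2+d\gamma-\kappa$ ensures that, after balancing against the linear term $c\lambda\|F\|_{\toma}$ at $\lambda\asymp\de^2$, the geometric series in $j$ closes, and a union bound then concludes the argument.
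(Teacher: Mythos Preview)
Your overall strategy for Part~(ii) --- basic inequality, peeling, Dudley chaining, Besov entropy --- is the correct route and coincides with the paper's approach, which packages the peeling/chaining machinery by invoking a general $M$-estimation concentration result (Proposition~\ref{thm:genplsfr}, derived from Theorem~2.1 of \cite{V01}) and then verifies its entropy hypothesis via Lemma~\ref{lem:psast}. However, your peeling variable is off: peeling over $\toma$-shells $\{\|F-F_*\|_{\toma}\sim 2^jR_0\}$ alone, the $L^2$-diameter of $\mG(S_j)$ and the sup-variance in Borell--TIS are only controlled by $(2^jR_0)^{\gamma+1}$ via (\ref{eq:kgb}), so both Dudley's bound and the fluctuation term scale superlinearly in the shell radius when $\gamma>0$, and the geometric series does \emph{not} close against the linear penalty $c\lambda\cdot 2^jR_0$. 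The paper (following van de Geer) instead peels over $\tau_\lambda$-shells $\vast(\lambda,R)=\{\tau_\lambda^2(F,F_*)\le R^2\}$, on which the $L^2$-diameter of the image $\dast(\lambda,R)$ is bounded by $2R$ \emph{directly} from the definition of $\tau_\lambda^2$; it is precisely this that makes the entropy integral $\jast(\lambda,R)$ grow sub-quadratically in $R$ under the hypothesis $\alpha\ge d/2+d\gamma-\kappa$, see Lemma~\ref{lem:psast}.

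For Part~(i), your coercivity step has a genuine gap: a ``Young-type splitting'' of $2\eps\langle\W,\mG(F)\rangle_{\ltwod}$ cannot be carried out because $\W$ is white noise, not an element of $\ltwod$, so there is no finite deterministic quadratic term to play it against. The paper handles this differently: it first establishes the concentration inequality (which does not presuppose existence of a maximiser) and uses it to localise the supremum of $\mathcal J$ to a fixed $\toma$-ball with probability arbitrarily close to one; only then does it apply compactness (of $\ba\hookrightarrow \bta$ for $\tilde\alpha<\alpha$) and weak$^*$ lower semicontinuity of $\|\cdot\|_{\toma}$ via Banach--Alaoglu. You are also glossing over the continuity of $F\mapsto\langle Y_\eps,\mG(F)\rangle_{\ltwod}$: since $\langle\W,\cdot\rangle$ is only defined as a cylindrical Gaussian process, its a.s.\ continuity on bounded $\ba$-sets requires a separate Dudley-entropy argument (Lemma~\ref{lem:exist} in the paper), and does not follow from the $L^2$-continuity of $\mG$ alone.
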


Note that in the above theorem, we do not necessarily require the data-generating `true' parameter $F_\dagger$ to lie in the $\alpha$-regular set $\tv$, but merely that it can be approximated sufficiently well (in the sense of the functional $\tals$) by some $F_*\in \tv, ~F_*\approx F_\dagger$. [In fact this will be used crucially in the construction of the tests necessary for establishing contraction rates in the proof of Theorem \ref{thm:gen-contraction}]. Of course, when $F_\dagger\in\tv$ one may choose $F_\ast=F_\dagger$. We moreover observe that clearly the functional $\tau_\lambda^2(\Fpls, F_\dagger)$  only controls the \textit{prediction risk} $\|\mathcal G(\Fpls)-\mathcal G(F_\dagger)\|$, while convergence rates for  $\Fpls-F_\dagger$ typically additionally require a stability estimate for the inverse $\mathcal G^{-1}$; see Section~\ref{sec:pderes} below for concrete examples. We also refer to Theorem 2 in \cite{NVW18}, where analogous convergence results are derived for regularized least squares functionals with $L^2$-type Sobolev penalties.

An immediate consequence of the preceding theorem is the following corollary bounding the (more commonly considered) $L^2(\mathcal D)$-mean squared error.

\begin{cor}\label{cor-mse}
Suppose that $\alpha,\kappa,\gamma,\beta$ and $\mathcal G$ are as in Theorem~\ref{thm:plsfr}, and let $\mathcal J_{\lambda,\eps}$ and $\delta_\eps$ be given by (\ref{Jdef}) and (\ref{eq:deltaeps}) respectively. Then, for any $r>0$ there exist $C,C'>0$ such that with $\lambda= C\delta_\eps^2$ and for all $\eps>0$ small enough,
\begin{equation}\label{eq:expbd}
\sup_{F_\dagger\in\tv: \norm{F_\dagger}_{\ba}\leq r}E^\eps_{F_\dagger}\big\|\mG(\Fpls)-\mG(F_\dagger)\big\|_{\ltwod}^2\leq C' \delta_\eps^2.
\end{equation}
\end{cor}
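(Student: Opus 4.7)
The plan is to apply Theorem~\ref{thm:plsfr}(ii) with the approximating element $F_\ast$ chosen to equal the true parameter $F_\dagger$. Since the supremum in (\ref{eq:expbd}) is taken over $F_\dagger\in\tv$, this choice is admissible and allows the approximation term $\tau_\lambda^2(F_\ast,F_\dagger)$ to reduce to the pure penalty contribution $\lambda\norm{F_\dagger}_{\toma}$.

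First I would expand
\[ \tau_\lambda^2(F_\dagger,F_\dagger) \;=\; \norm{\mG(F_\dagger)-\mG(F_\dagger)}_{\ltwod}^2 + \lambda\norm{F_\dagger}_{\toma} \;=\; \lambda\norm{F_\dagger}_{\toma}. \]
By the equivalence of $\|\cdot\|_{\ba}$ and the wavelet sequence norm $\|\cdot\|_{\toma}$ on the class $\tv$ of $B^\alpha_{11}$-functions supported in $\kmap$ (noted just after (\ref{eq:tomanorm})), there is a constant $c_0>0$ with $\norm{F_\dagger}_{\toma}\leq c_0\norm{F_\dagger}_{\ba}\leq c_0 r$ for every $F_\dagger$ in the Besov ball. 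With the choice $\lambda=C\delta_\eps^2$ this yields $\tau_\lambda^2(F_\dagger,F_\dagger)\leq c_0 C r\,\delta_\eps^2$, uniformly over $F_\dagger\in\tv$ with $\norm{F_\dagger}_{\ba}\leq r$.

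Next, the expectation bound in Theorem~\ref{thm:plsfr}(ii) gives a universal constant $C''>0$ such that
\[ E^\eps_{F_\dagger}\big[\tau_\lambda^2(\Fpls,F_\dagger)\big] \;\leq\; C''\big(\tau_\lambda^2(F_\dagger,F_\dagger) + \delta_\eps^2\big) \;\leq\; C''(c_0 C r + 1)\,\delta_\eps^2. \]
Finally, since the penalty term is nonnegative,
\[ \norm{\mG(\Fpls)-\mG(F_\dagger)}_{\ltwod}^2 \;\leq\; \norm{\mG(\Fpls)-\mG(F_\dagger)}_{\ltwod}^2 + \lambda\norm{\Fpls}_{\toma} \;=\; \tau_\lambda^2(\Fpls,F_\dagger), \]
so dropping the penalty on the left-hand side of the previous display delivers the claim with $C'=C''(c_0 C r + 1)$. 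The bound is uniform over $\{F_\dagger\in\tv:\norm{F_\dagger}_{\ba}\leq r\}$ since all constants entering depend only on $r$, the wavelet basis, and the universal constants from Theorem~\ref{thm:plsfr}. There is no serious obstacle here; the corollary is essentially a bookkeeping consequence of Theorem~\ref{thm:plsfr}, the main point being the correct specialisation $F_\ast=F_\dagger$ together with the norm equivalence on the compactly supported class $\tv$.
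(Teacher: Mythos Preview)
Your proof is correct and follows exactly the approach the paper intends: the corollary is stated as an ``immediate consequence'' of Theorem~\ref{thm:plsfr}, and your specialisation $F_\ast=F_\dagger\in\tv$ together with the norm equivalence $\|\cdot\|_{\toma}\simeq\|\cdot\|_{\ba}$ on $\tv$ is precisely what is needed to turn the expectation bound of Theorem~\ref{thm:plsfr}(ii) into the uniform MSE bound~(\ref{eq:expbd}).
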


To conclude this section, we remark that our proof techniques permit more flexible choices of penalty norms in principle, too.

\begin{rem}[Alternative choices of $\lambda$]
The preceding results make a choice $\lambda\simeq \delta_\eps^2$ which both yields the minimax-optimal convergence rate $\delta_\eps=\eps^{\frac{2\kappa+2\alpha}{2\kappa+2\alpha+d}}$ for estimating an $\alpha+\kappa$-smooth function, and is also tailored towards our proofs for Theorem~\ref{thm:gen-contraction}. Our proof techniques do in principle permit a more flexible choice of $\lambda$, which would lead to a result under assumptions on the relationship between $\lambda, \epsilon$ and the convergence rate $\delta$, paralleling (2.7) in \cite{NVW18}. 
\end{rem}


\section{Results for PDE inverse problems}\label{sec:pderes}

We now apply our general theory to two prototypical non-linear inverse problems arising from elliptic partial differential equations, one with a divergence form \textit{steady-state diffusion equation} and one with the \textit{steady-state Schr\"odinger equation}.

\subsection{Preliminaries}
Throughout this section we fix an open and bounded domain $\mathcal O\subseteq \R^d$ ($d\ge 2$) with smooth boundary $\partial O$ on which the PDEs are considered. In both of our examples the unknown parameter, denoted by $f:\mathcal O\to (0,\infty)$, is a \textit{positive} coefficient function of the differential operator at hand, and the forward map arises as the parameter-to-solution map $f\mapsto u_f\in L^2(\mathcal O)$ of a corresponding boundary value problem which we define in detail below. The observations are then given as a noisy version of the solution $u_f$, corrupted by a Gaussian white noise process $\W$ indexed by the Hilbert space $L^2(\mathcal O)$,
\begin{equation}\label{eq:modelf}
Y_\eps=u_f+\eps\W,~~ \eps>0.
\end{equation}
We denote the law of $Y_\eps$ by $P_{f}^\eps$, and recall that (\ref{eq:modelf}) serves as a continuous limit for `equally spaced' discrete measurements of $u_f$ \cite{R08,BL96}. 


Our interest is to model $f$ as a sufficiently smooth member of an $\ell^1$-type Besov space; as in Section~\ref{sec:genres} we fix some compact subset $K\subset \mathcal O$. Then, for regularity level $\alpha>d$ and lower bound $K_{min}\in [0,1)$, define the parameter spaces
\begin{equation}\label{Fdef}
\mathcal F=\mathcal F_{\alpha, K_{min}} = \big\{f\in B^\alpha_{11}(\mO): f\equiv 1~ \text{on}~\mO\setminus K, ~ f>K_{min} ~\text{on}~\mO\big\},
\end{equation}
to which the ground truth $f_0$ will be assumed to belong. To employ the Laplace-type priors from Definition~\ref{def:prior} to model $f\in\mathcal F$, which naturally are supported on \emph{linear} subspaces of $L^2(\mathcal O) $ and thus do \emph{not} obey positivity constraints, we need to introduce a routine one-to-one re-parameterisation $f\mapsto F:=\Phi^{-1}\circ f$, via some \emph{link function} $\Phi:\R\to (K_{min},\infty)$. Specifically, we will assume that $\Phi$ is an arbitrary (but fixed) smooth, one-to-one function satisfying $\Phi'(x)>0,~ x\in \R$; for convenience we will moreover assume that $\Phi(0)=1$. For instance, we may choose $\Phi(x)=(1-K_{min})\exp(x)+K_{min}$.


With the preceding definitions, the class of priors $\Pi^f$ which is considered for $f\in\mathcal F$ are simply given as pushforward distributions under $\Phi$ of the Laplace-type priors from Section~\ref{sec:genres}. Specifically, as before we fix a cut-off function $\chi\in C_c^\infty(\mathcal O)$ with $\chi|_{K}\equiv 1$. Then, for (integer) $\alpha >d$, scaling parameter $\rho>0$, and with $F\sim \Pi$ from (\ref{prior}), we set
\begin{equation}\label{f-prior}
    \Pi^f\coloneqq \mathcal L(\Phi\circ F) =\mathcal L\Big(\Phi\circ \Big[\rho\chi \cdot \sum_{k=1}^\infty \sum_{l=1}^{L_k}2^{(d/2-\alpha)k}\xi_{kl}\psi_{kl} \Big]\Big),
\end{equation}
where again $\{\xi_{kl}\}_{k,l}$ are i.i.d. Laplace random variables. Finally, we note that the choices of $f\equiv 1$ on $\partial O$ and $K_{min}\in [0,1)$ in (\ref{Fdef})  were made for convenience, and that the results to follow could be extended to more general boundary assumptions. Similarly, though it could be relaxed, the assumption for $\partial \mathcal O$ to be smooth is convenient to apply standard PDE regularity theory, see, e.g. \cite{LM72,T83, GT98}.

\subsection{Steady-state diffusion equation}
For a \textit{given} and known `source function' $g\in C^\infty(\mO)$ and `diffusion coefficient' $f:\mathcal O\to (0,\infty)$,
consider the divergence form PDE describing $u\equiv u_f$,
\begin{equation}\label{div}
\begin{cases}
	\nabla\cdot (f\nabla u) &= g ~~~\text{on}~ \mO, \\ 
	u &=0~~~ \text{on}~ \partial\mO. 
\end{cases}
\end{equation}
Whenever $f\in C^{1+\eta}(\mathcal O)$ for some $\eta>0$, it is well-known from classical (Schauder) PDE theory that a unique solution $u_f\in C^{2+\eta}(\mO)\cap C(\bar{\mO})$ to (\ref{div}) exists, see e.g. Chapter 6 of \cite{GT98}. By the standard embedding (\ref{embedding}) of Besov spaces, this can e.g. be ensured if $f\in\mF$ from (\ref{Fdef}), for  $\alpha> 1+d$. Note that while the PDE is linear, the map $f\mapsto u_f$ is nonlinear. The model (\ref{div}) can mathematically be viewed as a steady-state diffusion (or heat) equation \cite{R81} with unknown diffusivity (or conductivity) $f>0$. This PDE appears in a variety of application areas, including the modelling of subsurface geophysics and groundwater flow, see, e.g., \cite{S10,Y86}. In the mathematical literature, (\ref{div}) has served as a prototypical example of non-linear inverse problems \cite{S10,R81,DS17,itokunisch,BCDPW17}.

The general convergence results laid out in Section~\ref{sec:genres} permit us to derive both contraction rates of posterior distributions and convergence rates for variationally defined MAP-type PLS estimators, for the forward and the inverse problem. The proofs for this section can be found in Section A of the Supplement.

\begin{thm}[Posterior contraction]\label{div-fwdrate}
	For $K_{min}\in(0,1)$ and integer $\alpha > 2+d$, let $\mathcal F$, $u_f$ be given by (\ref{Fdef}), (\ref{div}) respectively, and suppose that $f_0\in \mF$. Moreover, define $\de:=\eps^{\frac{2\alpha+2}{2\alpha+2+d}}$
	and let the prior distribution $\pe^f\equiv \Pi^f$ for $f$ be given by (\ref{f-prior}) with $\rho=\eps^2/\de^2=\eps^{2d/(2\alpha+2+d)}$. Then the following holds.
	
	
	\textbf{i)} There exists a large enough constant $L>0$ and some $c>0$ such that as $\eps\to0$,
	\begin{equation*}
	\pe^f \big( f: \|u_f - u_{f_0}\|_{\ltwo} \ge L\de \,\big|\, Y_\eps\big) = O_{P_{f_0}^\eps}\big( e^{-c\de^2/\eps^2} \big).
	\end{equation*}
	
	
	\textbf{ii)} If in addition $g\ge g_{min}$ for some $g_{min}>0$, then it holds that for some $L, c>0$, as $\eps\to0$,
		\begin{equation*}
		\pe^f \big( f: \|f - f_0\|_{\ltwo} \ge L\de^{\theta} \,\big|\, Y_\eps\big) = O_{P_{f_0}^\eps}\big( e^{-c\de^2/\eps^2} \big),~~~\textnormal{where}~\theta = \frac{\alpha-d-2}{\alpha-d}.
		\end{equation*}
    For any $r>0$, these constants can be chosen uniformly over classes $\mathcal F^{(r)}:=\{f\in \mathcal F :\|\Phi^{-1}\circ f\|_{B^\alpha_{11}(\mathcal O)}\le r\}$.
\end{thm}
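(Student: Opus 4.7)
The plan is to reduce Theorem~\ref{div-fwdrate} to Theorem~\ref{thm:gen-contraction}. After reparameterising $F:=\Phi^{-1}\circ f$, the prior $\pe^f$ is the pushforward under $\Phi$ of the prior $\pe$ from Definition~\ref{def:prior} on $F$, and the effective forward map becomes $\mG:F\mapsto u_{\Phi(F)}$. Choosing $\kappa=1$ in (\ref{eq:deltaeps}) yields precisely the announced rate $\de=\eps^{(2\alpha+2)/(2\alpha+2+d)}$, so it suffices to verify Assumptions~\ref{ass:truth} and~\ref{ass:kgb} (with $\kappa=1$) for $\mG$ and the reparameterised truth $F_0:=\Phi^{-1}\circ f_0$. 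Assumption~\ref{ass:truth} is immediate from the smoothness of $\Phi^{-1}$ on $(K_{min},\infty)$ and $\Phi(0)=1$ combined with standard Besov composition estimates (using $f_0>K_{min}$ on $\mO$ and $f_0\equiv 1$ outside $K$).

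For Assumption~\ref{ass:kgb} with $\beta=1$, the key step is establishing local Lipschitz continuity of $\mG$ from $C^1_\chi(\mathcal O)$ into $\ltwo$ with respect to the $H^{-1}(\mO)$-norm. I would split this into two standard sub-estimates. First, a classical PDE duality argument: for $f_i:=\Phi(F_i)$ (which are uniformly bounded below by some $m_R>K_{min}$ and have $\|f_i\|_{C^1}\lesssim_R 1$), test the weak form of (\ref{div}) for $u_{f_1}-u_{f_2}$ against the solution $\phi$ of the adjoint equation $-\nabla\cdot(f_1\nabla\phi)=u_{f_1}-u_{f_2}$ with zero Dirichlet trace, obtaining
\[
\|u_{f_1}-u_{f_2}\|_{\ltwo}^2=\int_\mO(f_2-f_1)\bigl(\nabla u_{f_2}\cdot\nabla\phi\bigr);
\]
combined with the elliptic $H^2$-regularity $\|\phi\|_{H^2(\mO)}\lesssim_R\|u_{f_1}-u_{f_2}\|_{\ltwo}$ and the uniform $C^1$-bound on $u_{f_2}$, this yields $\|u_{f_1}-u_{f_2}\|_{\ltwo}\lesssim_R\|f_1-f_2\|_{H^{-1}(\mO)}$. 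Second, pass from $f$ to $F$: write $f_1-f_2=M_{F_1,F_2}\cdot(F_1-F_2)$ with the $C^1$-multiplier $M_{F_1,F_2}:=\int_0^1\Phi'(tF_1+(1-t)F_2)\,dt$ uniformly bounded on the $R$-ball; a standard $H^{-1}$-multiplier estimate (applicable because $F_1-F_2$ is compactly supported in $\mO$) then gives $\|f_1-f_2\|_{H^{-1}}\lesssim_R\|F_1-F_2\|_{H^{-1}}$. Composing the two verifies (\ref{eq:local-lip}), and part (i) follows directly from (\ref{eq:forwardcontraction}) together with the pushforward $\pe^f=\mL(\Phi(F))_{F\sim\pe}$.

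For part (ii), I would combine the forward contraction from (i) with the sieve bound (\ref{eq:Cbconcentration}). Fix any $b\in[\alpha-d-2,\alpha-d)$ (possible since $\alpha>d+2$ integer); classical Schauder estimates applied to (\ref{div}) convert the $\cbee$-bound on $F$ (hence on $f=\Phi(F)$) into a uniform bound on $\|u_f-u_{f_0}\|_{H^{\alpha-d}(\mO)}$ on the sieve. Then invoke the well-known backward stability estimate for the Darcy map under the nondegeneracy condition $g\ge g_{min}>0$,
\[
\|f-f_0\|_{\ltwo}\le C\|u_f-u_{f_0}\|_{H^2(\mO)},
\]
(see, e.g., \cite{R81, NVW18, BCDPW17}), with $C$ depending only on $f_0$ and $g_{min}$, along with the standard Sobolev interpolation
\[
\|v\|_{H^2(\mO)}\le C'\|v\|_{\ltwo}^{(\alpha-d-2)/(\alpha-d)}\,\|v\|_{H^{\alpha-d}(\mO)}^{2/(\alpha-d)}
\]
applied to $v=u_f-u_{f_0}$, to obtain $\|f-f_0\|_{\ltwo}\lesssim \de^{(\alpha-d-2)/(\alpha-d)}$. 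Uniformity over $\mathcal F^{(r)}$ is inherited from the uniformity over $\|F_0\|_{\ba}\lesssim r$ in Theorem~\ref{thm:gen-contraction}.

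The main technical obstacle is expected to be the $H^{-1}$-multiplier estimate for the nonlinear link function $\Phi$, since products of $H^{-1}$-distributions with merely $C^1$-coefficients are delicate; restricting to compactly supported inputs in $\mO$ (via the cutoff $\chi$) and exploiting the uniform $C^1$-bound on $M_{F_1,F_2}$ should suffice. A secondary, more routine point is tracking the dependence of all PDE constants polynomially (or even arbitrarily) in $R$; crucially, this is facilitated by the relaxed local Lipschitz condition (\ref{eq:local-lip}), which permits $C_R$ to have arbitrary growth in $R$ and thus avoids delicate explicit quantitative bounds.
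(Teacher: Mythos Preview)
Your proposal is correct and follows essentially the same approach as the paper: reparameterise via $F=\Phi^{-1}\circ f$, verify Assumptions~\ref{ass:truth} and~\ref{ass:kgb} for $\mG(F)=u_{\Phi\circ F}$ with $\kappa=1$, apply Theorem~\ref{thm:gen-contraction} for part~\textbf{i)}, and then combine the $C^b$-concentration~(\ref{eq:Cbconcentration}), the stability estimate $\|f-f_0\|_{L^2}\lesssim\|u_f-u_{f_0}\|_{H^2}$ from~\cite{NVW18}, a uniform $H^{\alpha-d}$-bound on $u_f$, and Sobolev interpolation for part~\textbf{ii)}. The only cosmetic difference is that the paper takes $\beta=2$ (invoking Theorem~9 and Lemmas~21--22 of~\cite{NVW18} for the PDE part and an auxiliary composition lemma for the link-function part) rather than your $\beta=1$; both choices satisfy $\beta<\alpha-d$ under the hypothesis $\alpha>d+2$, and your direct duality/multiplier sketch is precisely the content of the cited lemmas.
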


The proof idea is as follows. As in \cite{NVW18}, we use forward estimates for the non-linear PDE solution map $f\mapsto u_f$ to prove that the operator $F\mapsto u_{\Phi\circ F}$ indeed satisfies the local Lipschitz Assumption~\ref{ass:kgb} with $\kappa=1$, $\mathcal D=\mO$ (and suitable $F$), whence an application of Theorem~\ref{thm:gen-contraction} yields the prediction risk bound in part \textbf{i)} of the theorem. Part \textbf{ii)} of the Theorem additionally requires a `stability estimate' bounding $f-f_0$ in terms of $u_f-u_{f_0}$, in suitable norms. In non-linear inverse problems this requires a case-by-case analysis, and for the divergence form problem (\ref{div}) at hand this has been extensively studied in the last decades, see for instance \cite{R81,itokunisch,BCDPW17} and references therein. A quantitative version of such stability estimates was proved in \cite{NVW18}, which will be used in our proofs, in combination with standard interpolation inequalities for Besov spaces.



We now turn to the penalized least squares estimators with $\ell^1$-type Besov penalty from Section~\ref{sec:PLS}. We recall the wavelet penalty norm $\|\cdot\|_{\tilde{\mathcal Z}_\alpha}$ from (\ref{eq:tomanorm}) and its natural association to product Laplace measures via Lemma~ \ref{lem:shsb}. Again, to accommodate the (non-linear) positivity constraint from (\ref{Fdef}), the re-framed optimization objective from (\ref{Jdef}) in terms of $f$ takes the form
\begin{equation}\label{I-def}
    \mathcal I_{\lambda,\eps}:\mF\to\R,\quad \mathcal I_{\lambda,\eps}(f)\coloneqq 2\ip{Y_\eps}{u_{f}}_{\ltwod}-\norm{u_{f}}^2_{\ltwod}-\lambda\norm{\Phi^{-1}\circ f}_{\toma}.
\end{equation}
For the next result, we require an additional regularity assumption on the link function $\Phi$. As in \cite{NVW18} (see also \cite{GN20}), we make the following definition.

\begin{defin}\label{def:reglink}
	A map $\Phi:\R\to (K_{min},\infty)$ is a \emph{regular link function} if it is smooth, one-to-one and satisfies that $\Phi'(x)>0$ for all $x\in \R$ as well as
	\[ \sup_{k\ge 1}\sup_{x\in\R}\,\big|\,\Phi^{(k)}(x)\,\big|\, <\infty.  \]
\end{defin}

\begin{thm}[Convergence of PLS estimators with $\ell^1$ penalty]\label{div-maprate}
    Let $K_{min}$, $\alpha$, $\mathcal F$, $u_f$, $f_0$, $\delta_\eps$, $\mathcal F^{(r)}$ and $\theta$ be as in Theorem~\ref{div-fwdrate}. Assume in addition that $\alpha\ge 9d/2 - 1$ and that $\Phi:\R\to (K_{min},\infty)$ is a regular link function.
    
    
    \textbf{i)} For all $\lambda, \eps>0$, almost surely under $P^\eps_{f_0}$, there exists a maximizer $\fpls$ of $\mathcal I_{\lambda,\eps}$ over $\mathcal F_{\alpha,K_{min}}$.
    
    \textbf{ii)} For any $r>0$, there exist constants $C,C'>0$ such that with $\lambda\equiv\lambda_\eps\equiv C\delta_\eps^2$ and any $0<\eps<1$, we have
    \[ \sup_{f_0\in\mF^{(r)}}E^\eps_{f_0}\big\|u_{\fpls}-u_{f_0}\big\|_{L^2(\mathcal O)}^2\leq C' \delta_\eps^2.
    \]
    
    \textbf{iii)} Assume in addition $g\ge g_{min}$ for some $g_{min}>0$ and choose $\lambda$ as in part \textbf{ii)}. Then for every $r>0$ there exists $C''>0$ such that
    \[ \sup_{f_0\in\mF^{(r)}}E^\eps_{f_0}\big\|\fpls-f_0\big\|_{L^2(\mathcal O)}^2\leq C'' \delta_\eps^{2\theta}.
    \]
    
    \noindent
    Moreover, $C>0$ can be chosen independently of $r>0$, and if in addition $\alpha> 9d/2 - 1$, then we may choose $C=1$.
\end{thm}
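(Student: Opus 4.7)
The plan is to reduce to Theorem~\ref{thm:plsfr} and Corollary~\ref{cor-mse} by verifying $(\kappa,\gamma,\beta)$-regularity of the re-parameterised forward map, and then to upgrade the prediction-risk bound to an $L^2$-bound on $f$ via the quantitative stability estimate for \eqref{div}. Set $F_0:=\Phi^{-1}\circ f_0$ and define the composed forward operator
\[
    \mathcal G(F) := u_{\Phi\circ F},\qquad F\in \tv,
\]
where $\tv$ is the set from \eqref{eq:vi} with a suitable compact $\kmap\supseteq K$. Under the prior reparameterisation, the functional $\mathcal I_{\lambda,\eps}$ from \eqref{I-def} becomes, up to $F\mapsto \Phi\circ F$, exactly $\mathcal J_{\lambda,\eps}$ from \eqref{Jdef}, so the question of maximising $\mathcal I_{\lambda,\eps}$ over $\mF$ reduces to maximising $\mathcal J_{\lambda,\eps}$ over $\tv$.

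The first core step is to verify that $\mathcal G$ is $(1,\gamma,1)$-regular for some integer $\gamma$ (I would expect $\gamma\le 4$, which is what the hypothesis $\alpha\ge 9d/2-1$ combined with $d/2+d\gamma-\kappa\le \alpha$ allows). Writing $v=u_{f_1}-u_{f_2}$ for the difference of PDE solutions with coefficients $f_i=\Phi\circ F_i$, one has the identity
\[
    \nabla\cdot(f_1 \nabla v) = \nabla\cdot\big((f_2-f_1)\nabla u_{f_2}\big) \quad \text{on } \mO,\qquad v=0 \text{ on }\partial\mO.
\]
Testing against $v$, applying Poincar\'e, and invoking standard Schauder bounds $\|u_{f_2}\|_{C^1(\mO)}\le C(\|f_2\|_{C^1(\mO)})$ yields the usual $L^2$-estimate; duality in the source term $(f_2-f_1)\nabla u_{f_2}$ then gives the key negative-norm estimate $\|v\|_{L^2}\lesssim P(\|F_i\|_{C^1})\cdot \|F_1-F_2\|_{H^{-1}}$ for a polynomial $P$ whose degree (together with the derivatives of the regular link function $\Phi$ from Definition~\ref{def:reglink}) pins down $\gamma$. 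This gives $(1,\gamma,1)$-regularity in the sense of \eqref{eq:kgb}, and with $\alpha\ge 9d/2-1$ and $d\ge 2$ the hypotheses $\alpha>\beta+d=1+d$ and $\alpha\ge d/2+d\gamma-\kappa$ of Theorem~\ref{thm:plsfr} are both met.

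For part \textbf{i)}, existence of a maximiser, I would argue by a direct variational argument: the penalty term $\|\Phi^{-1}\circ f\|_{\toma}$ makes $-\mathcal I_{\lambda,\eps}$ coercive on $\tv$ (after reparameterisation), any maximising sequence $\{F_n\}\subset \tv$ is bounded in $\ba$, hence by weak-$*$ compactness admits a subsequence $F_{n_k}\rightharpoonup F^\ast$ with $F^\ast\in\tv$ (compact support is preserved by the cut-off $\chi$). The data-fidelity term $F\mapsto 2\langle Y_\eps,\mathcal G(F)\rangle-\|\mathcal G(F)\|^2$ is continuous under the weak topology because $\mathcal G$ maps $C^\beta$-bounded sets into compact subsets of $L^2(\mO)$ (the composed PDE solution map gains two derivatives), while $\|\cdot\|_{\toma}$ is weakly-$*$ lower semi-continuous. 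Hence $F^\ast$ is a maximiser, and $\fpls:=\Phi\circ F^\ast\in\mF$ maximises $\mathcal I_{\lambda,\eps}$.

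Part \textbf{ii)} then follows directly from Corollary~\ref{cor-mse} applied with $F_\dagger=F_\ast=F_0=\Phi^{-1}\circ f_0$, since $\|F_0\|_{B^\alpha_{11}}\lesssim_r 1$ uniformly on $\mF^{(r)}$ by the (smooth, Lipschitz) composition properties of $\Phi^{-1}$ and the boundedness of $\|\Phi^{-1}\circ f_0\|_{B^\alpha_{11}}$ on $\mF^{(r)}$. For part \textbf{iii)}, I would combine the $L^2$-bound on $u_{\fpls}-u_{f_0}$ from part \textbf{ii)} with a stability estimate for the inverse of $f\mapsto u_f$ in the spirit of those developed in \cite{NVW18,BCDPW17}: under $g\ge g_{min}>0$, one has
\[
    \|f-f_0\|_{L^2}\lesssim \|u_f-u_{f_0}\|_{H^2}^{1/2}\|f-f_0\|_{C(\bar\mO)}^{1/2}
\]
(or an equivalent form). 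Using the a-priori $C^1$-boundedness of $\fpls$ on the high-probability event underlying Theorem~\ref{thm:plsfr} (the $\tom$-coercivity gives control on $\|F\|_{B^\alpha_{11}}$ and thus on $\|F\|_{C^b}$ by the embedding \eqref{embedding}), interpolating $\|u_f-u_{f_0}\|_{H^2}$ between $L^2$-error and a higher-order norm, and using the ratio of smoothness indices produces the exponent $\theta=(\alpha-d-2)/(\alpha-d)$. The main obstacle here is keeping the polynomial in $\|F\|_{C^1}$ under control when integrating tails against the concentration bound \eqref{eq:concbd}; however, the sub-Gaussian speed in \eqref{eq:concbd} absorbs any polynomial moment uniformly over $\mF^{(r)}$. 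The final claim that $C=1$ for $\alpha>9d/2-1$ strictly reflects that the inequality $d/2+d\gamma-\kappa\le \alpha$ holds with slack, enabling a slightly sharper choice of $\lambda$ in the regularisation step.
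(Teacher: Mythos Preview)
Your overall strategy matches the paper's proof: verify $(\kappa,\gamma,\beta)$-regularity of $\mathcal G(F)=u_{\Phi\circ F}$ via the regular link function to invoke Theorem~\ref{thm:plsfr} and Corollary~\ref{cor-mse} for parts \textbf{i)}--\textbf{ii)}, then combine a PDE stability estimate with Sobolev interpolation and the sub-Gaussian concentration bound \eqref{eq:concbd} for part \textbf{iii)}. Two details should be adjusted.

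First, $\beta$ must be $2$, not $1$. Classical Schauder existence for \eqref{div} requires $f\in C^{1+\eta}(\mO)$ for some $\eta>0$, so the natural domain for $\mathcal G$ is $C^2_\chi(\mO)$; accordingly the paper verifies $(\kappa,\gamma,\beta)=(1,4,2)$, and the hypothesis $\alpha\ge 9d/2-1$ is precisely $\alpha\ge d/2+d\gamma-\kappa$ for these values. With $\beta=1$ you would not be able to invoke the classical solution theory underlying your Schauder bound $\|u_{f_2}\|_{C^1}\le C(\|f_2\|_{C^1})$.

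Second, the stability estimate you wrote, $\|f-f_0\|_{L^2}\lesssim \|u_f-u_{f_0}\|_{H^2}^{1/2}\|f-f_0\|_{C}^{1/2}$, is not the one used and does not obviously deliver the exponent $\theta=(\alpha-d-2)/(\alpha-d)$. The paper instead uses the cleaner form $\|f-f_0\|_{L^2}\lesssim \|f\|_{C^1}\|u_f-u_{f_0}\|_{H^2}$ (Lemma~24 of \cite{NVW18}), then interpolates $\|u_f-u_{f_0}\|_{H^2}$ between $\|u_f-u_{f_0}\|_{L^2}$ and $\|u_f-u_{f_0}\|_{H^{\alpha-d}}$, the latter being controlled via the regularity bound $\|u_f\|_{H^{\alpha-d}}\lesssim 1+\|f\|_{C^{\alpha-d-1}}^{b^2+b}$ with $b=\alpha-d-1$. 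The resulting powers of $\|\hat f\|_{C^{\alpha-d-1}}$ are indeed absorbed by the exponential tail in \eqref{eq:concbd}, exactly as you anticipate.
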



\subsection{Steady-state Schr\"odinger equation}\label{sec-schr}
In our second example, $u_f$ arises as the solution to the following steady-state Schr\"odinger equation \cite{CZ95}
\begin{equation}\label{schr}
\begin{cases}
\frac \Delta 2 u - fu = 0 ~~~\text{on}~ \mO, \\ 
u =g~~~ \text{on}~ \partial\mO. 
\end{cases}
\end{equation}
Again, we assume $g\ge g_{min}>0$ to be a smooth and \textit{known} function this time on the boundary $\partial O$, $g\in C^\infty(\partial \mO)$,  and $f$ represents the unknown `potential' function. Whenever $f\ge 0$ and $f\in C^{\eta}(\mO)$ for some $\eta>0$, standard results from the Schauder PDE theory imply that there exists a unique solution $u=u_f\in C^{2+\eta}(\mO)\cap C(\bar{\mO})$ to (\ref{schr}), see, e.g., Chapter 6 in \cite{GT98}. For various statistical questions that have recently been studied in this non-linear statistical inverse problem, see, e.g., \cite{NVW18,N20,NW20}.

\begin{thm}[Contraction rates]\label{thm:schr}
    For $K_{min}\in [0,1)$ and integer $\alpha > 2+d$, let $\mathcal F$, $u_f$ be given by (\ref{Fdef}), (\ref{schr}) respectively, and suppose $f_0\in \mF$. Moreover, for $\de=\eps^{\frac{2\alpha+4}{2\alpha+4+d}}$, let the prior $\Pi_\eps^f\equiv \Pi^f$ be given by (\ref{f-prior}) with scaling constant $\rho=\eps^2/\de^2=\eps^{2d/(2\alpha+4+d)}$. Then the following hold.
	
	
    \textbf{i)} There exists a large enough constant $L>0$ as well as $c>0$ such that as $\eps\to0$,
	\begin{equation*}
		\pe^f \big( f: \|u_f - u_{f_0}\|_{\ltwo} \ge L\de \big| Y_\eps\big) = O_{P_{f_0}^\eps}\big( e^{-c\delta_\eps^2/\eps^2}\big).
	\end{equation*}
	\textbf{ii)} Moreover, for some $L, c>0$, as $\eps\to0$
	\begin{equation*}
		\pe^f \big( f: \|f - f_0\|_{\ltwo} \ge L\de^{\theta} \big| Y_\eps\big) = O_{P_{f_0}^\eps}\big( e^{-c\de^2/\eps^2} \big),~~~\textnormal{where}~\theta = \frac{\alpha-d-1}{\alpha-d+1}.
	\end{equation*}
	 For any $r>0$, these constants can be chosen uniformly over $\mathcal F^{(r)}:=\{f\in \mathcal F :\|\Phi^{-1}\circ f\|_{B^\alpha_{11}(\mathcal O)}\le r\}$.
\end{thm}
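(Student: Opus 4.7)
The plan is to mirror the strategy used for the divergence-form equation in Theorem~\ref{div-fwdrate}: first obtain the forward contraction bound in part \textbf{i)} by invoking Theorem~\ref{thm:gen-contraction} with the smoothing index $\kappa = 2$, and then upgrade this to the inverse bound in part \textbf{ii)} via a pointwise stability identity for the Schr\"odinger problem combined with interpolation against the $C^b$-concentration of the posterior granted by (\ref{eq:Cbconcentration}).

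For part \textbf{i)}, I define the composite forward map $\mG:F\mapsto u_{\Phi\circ F}$ on the domain of $F$'s giving rise to admissible potentials $f=\Phi\circ F\in\mF$, and verify Assumptions~\ref{ass:truth} and~\ref{ass:kgb} for this $\mG$ with the parameters $\alpha,\kappa=2$ and some $1\le\beta<\alpha-d$. Since $F_0=\Phi^{-1}\circ f_0$ is supported in $K$ (because $\Phi(0)=1$ and $f_0\equiv 1$ outside $K$) and belongs to $\ba$ by regularity of $\Phi^{-1}$, Assumption~\ref{ass:truth} is immediate. For Assumption~\ref{ass:kgb}, given $F_1,F_2\in C^\beta_\chi(\mO)$ with $\cb$-norms bounded by $R$, the corresponding potentials $f_i=\Phi\circ F_i$ remain in a bounded set of $C^\beta(\mO)$ and are non-negative (by $K_{min}\ge 0$), so that $v=u_{f_1}-u_{f_2}$ solves the linear Schr\"odinger-type PDE $(\Delta/2-f_1)v = (f_1-f_2)u_{f_2}$ with zero boundary data. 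Coercivity of $(\Delta/2 - f_1)$ together with standard elliptic duality then yields
\[
\norm{v}_{\ltwo}\lesssim \norm{(f_1-f_2)u_{f_2}}_{H^{-2}(\mO)} \lesssim \norm{u_{f_2}}_{C^2(\bar{\mO})}\norm{f_1-f_2}_{H^{-2}(\mO)},
\]
where the multiplier estimate $\|\phi w\|_{H^{-2}}\lesssim \|\phi\|_{C^2}\|w\|_{H^{-2}}$ is classical. Standard Schauder regularity provides $\|u_{f_2}\|_{C^2}\le C_R$, while regularity of $\Phi$ (as in, e.g., Lemma 29 of \cite{NVW18}) yields $\|f_1-f_2\|_{H^{-2}}\lesssim C_R\|F_1-F_2\|_{H^{-2}}$, completing (\ref{eq:local-lip}) with $\kappa=2$. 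Theorem~\ref{thm:gen-contraction} then delivers the forward bound at rate $\delta_\eps=\eps^{(2\alpha+4)/(2\alpha+4+d)}$, uniformly over $\mF^{(r)}$.

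For part \textbf{ii)}, the key stability identity is $f = \Delta u_f/(2u_f)$, whose validity requires $u_f$ to be bounded below by a positive constant. This follows from $g\ge g_{min}>0$, $f\ge 0$ and the maximum principle. Hence on the event that $\|F\|_{\cbee}\le M$ (which by (\ref{eq:Cbconcentration}) carries all but exponentially small posterior mass), the associated potentials are uniformly bounded in $C^\beta$, and a quotient calculation gives
\[
\norm{f-f_0}_{\ltwo}\lesssim \norm{u_f - u_{f_0}}_{H^2(\mO)},
\]
with constants depending only on $M$, $g_{min}$ and $\Phi$. To interpolate, I use that the same $C^\beta$-bound on $F$ lifts to a $C^{\beta+2}$-bound on $u_f$ via Schauder theory and thus, selecting $\beta$ as the largest integer strictly below $\alpha-d$, to a uniform bound on $\|u_f-u_{f_0}\|_{H^{\alpha-d+1}(\mO)}$ (taking $\beta=\alpha-d-1$). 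The standard Sobolev interpolation
\[
\norm{w}_{H^2(\mO)}\lesssim \norm{w}_{L^2(\mO)}^{1-2/(\alpha-d+1)}\norm{w}_{H^{\alpha-d+1}(\mO)}^{2/(\alpha-d+1)}
\]
then combines with part \textbf{i)} to produce the rate $\delta_\eps^{\theta}$ with $\theta=(\alpha-d-1)/(\alpha-d+1)$, on the intersection of the event from \textbf{i)} and the $C^b$-concentration event; by a union bound the exceptional probability is still $O_{P^\eps_{f_0}}(e^{-c\delta_\eps^2/\eps^2})$, and uniformity over $\mF^{(r)}$ is inherited from Theorem~\ref{thm:gen-contraction}.

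The main technical obstacle is the verification of the negative Sobolev Lipschitz estimate (\ref{eq:local-lip}) with the sharp smoothing index $\kappa=2$: one must carefully combine the dual elliptic estimate, the $H^{-2}$-multiplier inequality, and the nonlinear composition with $\Phi$ while tracking the dependence of all constants on the $C^\beta$-radius $R$, since Assumption~\ref{ass:kgb} permits (but requires isolating) an arbitrary dependence $C_R$. The remaining ingredients---the stability identity and the Sobolev interpolation---are then standard once the positivity of $u_f$ and the posterior $C^b$-concentration are in hand.
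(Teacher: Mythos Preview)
Your proposal is correct and follows essentially the same route as the paper: verify Assumption~\ref{ass:kgb} with $\kappa=2$ via the linearized equation $(\Delta/2-f_1)v=(f_1-f_2)u_{f_2}$ and elliptic duality (the paper imports this from Theorem~12 of \cite{NVW18} together with Lemma~\ref{lem-boring}), then for part~\textbf{ii)} combine the stability identity $f=\Delta u_f/(2u_f)$ (equivalently Lemma~28 of \cite{NVW18}) with the $C^{\alpha-d-1}$-posterior concentration and Sobolev interpolation between $L^2$ and $H^{\alpha-d+1}$. The only imprecision is your appeal to ``Schauder theory'' for the uniform $H^{\alpha-d+1}$-bound on $u_f$: since $\beta=\alpha-d-1$ is integer, classical H\"older--Schauder does not directly yield $C^{\beta+2}$, and the paper instead bootstraps in the Sobolev scale via $\|u_f\|_{H^{b+2}}\lesssim 1+\|f\|_{C^b}^{b/2+1}$ (a modification of Lemma~27 in \cite{NVW18}), which is what your argument actually needs.
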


Similarly as for the divergence form equation (\ref{div}), this theorem is derived by verifying that the parameter-to-solution map $F\mapsto u_{\Phi\circ F}$ satisfies the local Lipschitz property (\ref{eq:local-lip}) with $\kappa=2$, whence an application of Theorem~\ref{thm:gen-contraction} yields part \textbf{i)}, and part \textbf{ii)} again is obtained using suitable PDE stability estimates. For details, see Section A of the Supplement.


For MAP-type estimators we have the following result.

\begin{thm}[Convergence of PLS estimators with $\ell^1$ penalty]\label{schr-maprate}
    Let $K_{min}$, $\alpha$, $\mathcal F$, $f_0$, $\delta_\eps$, $\mathcal F^{(r)}$ and $\theta$ be as in Theorem~\ref{thm:schr}, and suppose that $\mathcal I_{\lambda, \eps}$ is given by (\ref{I-def}) with $u_f$ from (\ref{schr}). Assume in addition that $\alpha\ge 9d/2 - 2$ and that $\Phi:\R\to (K_{min},\infty)$ is a regular link function.
    
    
    \textbf{i)} For all $\lambda, \eps>0$, almost surely under $P^\eps_{f_0}$ there exists a maximizer $\fpls$ of $\mathcal I_{\lambda,\eps}$ over $\mathcal F_{\alpha,K_{min}}$.
    
    \textbf{ii)} For any $r>0$, there exist constants $C,C'>0$ such that with $\lambda\equiv\lambda_\eps\equiv C\delta_\eps^2$ and any $0<\eps<1$, we have
    \[ \sup_{f_0\in\mF^{(r)}}E^\eps_{f_0}\big\|u_{\fpls}-u_{f_0}\big\|_{L^2(\mathcal O)}^2\leq C' \delta_\eps^2.
    \]
    
    \textbf{iii)} With $\lambda$ chosen as in part \textbf{ii)}, for every $r>0$ there exists $C''>0$ such that
    \[ \sup_{f_0\in\mF^{(r)}}E^\eps_{f_0}\big\|\fpls-f_0\big\|_{L^2(\mathcal O)}^2\leq C'' \delta_\eps^{2\theta}.
    \]
    
    \noindent
    Moreover, $C>0$ can be chosen independently of $r>0$, and if $\alpha> 9d/2 - 2$, then we may set $C=1$.
    \end{thm}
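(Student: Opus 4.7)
The plan is to mirror the proof of Theorem~\ref{div-maprate}, replacing the divergence-form PDE \eqref{div} by the Schr\"odinger equation \eqref{schr}. Parts \textbf{i)} and \textbf{ii)} will reduce to an application of Theorem~\ref{thm:plsfr} and Corollary~\ref{cor-mse} to the composite forward map $\mathcal G: F \mapsto u_{\Phi \circ F}$, while part \textbf{iii)} will combine the prediction risk bound from \textbf{ii)} with a quantitative stability estimate for \eqref{schr} followed by an interpolation inequality.

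The key technical step is to verify that $\mathcal G$ satisfies the $(\kappa, \gamma, \beta)$-regularity condition \eqref{eq:kgb} with $\kappa = 2$, $\gamma = 4$ and some integer $\beta < \alpha - d$ (e.g., $\beta=2$, which is admissible thanks to $\alpha > 2+d$). Writing $v := u_{f_1} - u_{f_2}$ with $f_i = \Phi \circ F_i$, and subtracting the two copies of \eqref{schr}, one obtains $\tfrac12\Delta v - f_1 v = (f_1 - f_2) u_{f_2}$ on $\mathcal O$ with $v|_{\partial\mathcal O} = 0$. Pairing against an $\tilde H^2(\mathcal O)$-test function, together with standard elliptic $H^2$-regularity for the linear Schr\"odinger operator (with bounded coefficient $f_1$) and a product/multiplier estimate in $\hk$, yields $\|v\|_{L^2}\lesssim \|u_{f_2}\|_{C^\beta}\|\Phi\circ F_1 - \Phi\circ F_2\|_{\hk}$ for $\kappa=2$. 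The regularity of the link function (Definition~\ref{def:reglink}), combined with a mean-value expansion $\Phi\circ F_1 - \Phi\circ F_2 = (F_1-F_2)\int_0^1 \Phi'(F_2+t(F_1-F_2))\,dt$, bounds the second factor by $(1+\|F_1\|_{C^\beta}^{\gamma_1}\vee \|F_2\|_{C^\beta}^{\gamma_1})\|F_1-F_2\|_{\hk}$, while Schauder-type estimates control $\|u_{f_2}\|_{C^\beta}$ polynomially in $\|f_2\|_{C^\beta}$. These contributions combine to give \eqref{eq:kgb} with $\gamma = 4$, matching the hypothesis $\alpha \geq 9d/2 - 2 = d/2 + d\gamma - \kappa$ of Theorem~\ref{thm:plsfr} exactly. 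Parts \textbf{i)} and \textbf{ii)} then follow from Theorem~\ref{thm:plsfr} and Corollary~\ref{cor-mse}.

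For part \textbf{iii)}, I would invert \eqref{schr} pointwise. Since $g\ge g_{\min} > 0$ and $f\ge 0$, the weak maximum principle (applied to $\tfrac12\Delta u - fu = 0$) provides a positive lower bound on $u_f$ on $\bar{\mathcal O}$, uniform over $f\in\mathcal F^{(r)}$, so we may write $f = \Delta u_f/(2 u_f)$. Differentiating this identity yields the stability estimate $\|f_1 - f_2\|_{L^2}\lesssim \|u_{f_1} - u_{f_2}\|_{H^2}$, with constants depending only on uniform upper and lower bounds of $u_{f_i}$. Exactly as for the divergence-form case in Theorem~\ref{div-maprate}, the penalty term in $\mathcal I_{\lambda,\eps}(\fpls)$, together with Gaussian concentration of $\langle\mathcal W, u_f\rangle$, forces a uniform $B^\alpha_{11}$-bound on $\Fpls$ on an event of overwhelming probability; by Besov embedding \eqref{embedding} this gives a $C^b$-bound for $b<\alpha-d$, and hence by elliptic regularity for \eqref{schr} a uniform $H^{\alpha-d+1}$-bound for $u_{\fpls}$. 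The Gagliardo--Nirenberg interpolation
$\|v\|_{H^2}\lesssim \|v\|_{L^2}^{\theta}\|v\|_{H^{\alpha-d+1}}^{1-\theta}$ with $\theta = (\alpha-d-1)/(\alpha-d+1)$, applied to $v = u_{\fpls} - u_{f_0}$, together with part \textbf{ii)} and Jensen's inequality on the concave map $x\mapsto x^\theta$, then delivers $\sup_{f_0\in\mathcal F^{(r)}}\E^\eps_{f_0}\|\fpls - f_0\|^2_{L^2}\lesssim \delta_\eps^{2\theta}$.

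The main obstacle is the careful bookkeeping of the polynomial dependence in the verification of \eqref{eq:kgb}, where $\gamma=4$ emerges only after combining the chain rule for $\Phi\circ F$, product estimates in negative-order Sobolev spaces, and Schauder estimates for $u_f$. A secondary subtlety is arranging for the penalty-based $B^\alpha_{11}$-bound on $\Fpls$ to hold deterministically enough (i.e., on an event whose complement has exponentially small probability and contributes negligibly to the expectation) so as to legitimately feed into both the stability and the interpolation step.
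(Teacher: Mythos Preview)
Your proposal is correct and follows essentially the same route as the paper: parts \textbf{i)}--\textbf{ii)} reduce to Theorem~\ref{thm:plsfr}/Corollary~\ref{cor-mse} applied to $\mathcal G(F)=u_{\Phi\circ F}$ after verifying $(\kappa,\gamma,\beta)$-regularity with $\kappa=2,\gamma=4,\beta=2$ (the paper obtains this via the forward estimates from \cite{NVW18} combined with Lemma~\ref{lem-boring}~\textbf{iii)} for regular link functions), and part~\textbf{iii)} proceeds via the stability estimate $\|f-f_0\|_{L^2}\lesssim\|u_f-u_{f_0}\|_{H^2}$ together with the elliptic bound $\|u_f\|_{H^{\alpha-d+1}}\lesssim 1+\|f\|_{C^{\alpha-d-1}}^{b/2+1}$ and interpolation.

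One small correction: the appeal to Jensen on $x\mapsto x^\theta$ is not sufficient on its own, because the interpolation inequality carries a random prefactor polynomial in $\|\hat f\|_{C^{\alpha-d-1}}$ which is not deterministically bounded. The paper (and your own final paragraph) resolve this by slicing on the level sets of $\tau_\lambda^2(\hat F,F_0)$ using the exponential tail bound \eqref{eq:concbd}, so that on $\{\tau_\lambda^2\le L^2\delta_\eps^2\}$ one has $\|\hat F\|_{\ba}\lesssim L^2$ and hence control of the prefactor, while the complement has probability $\lesssim\exp(-L^2\delta_\eps^2/\eps^2)$; summing over dyadic $L$ gives the expectation bound. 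Replace the Jensen step by this layer-cake argument and your sketch is complete.
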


\subsection{Remarks}
\begin{rem}[Minimax rates for $u_{f_0}$]\label{rem:minimax}\normalfont
The rates of convergence $\delta_\eps$ obtained for the `PDE-constrained regression problem' of estimating $u_{f_0}$ in this section are equal to the minimax rate for estimating a function of (Besov) smoothness $\alpha+1$ (div. form problem) and $\alpha+2$ (Schr\"odinger problem) respectively (see, e.g., Section 6 in \cite{GN16}), and coincide with the rates achieved in \cite{NVW18, GN20} over $H^\alpha(\mathcal O)$ Sobolev bodies. In this sense, one may regard the map $f\mapsto u_f$ as $\kappa$-times ($\kappa=1,2$) smoothing. In \cite{NVW18} (see also \cite{N20}), it was shown that those rates are indeed minimax-optimal over Sobolev bodies. 
Conceivably, one could demonstrate analogously that $\delta_\eps$ is also optimal over $\ba$-type Besov bodies. We decided to leave this question open for future work.
\end{rem}

\begin{rem}[Truncated priors]\label{rem:trunc}\normalfont
It is natural also to consider contraction rates for `high-dimensional' (random or deterministic) truncations of non-parametric sequence priors, see, e.g., \cite{GN20,HK22,NW20} where this is pursued for Gaussian processes. Indeed, in the current context of \textit{rescaled} prior measures this leads to improved rates of contraction for the \textit{inverse problem} of recovering $f$, since prior and hence also posterior draws then belong to spaces of higher regularity than $C^b,~ b<\alpha-d$. In turn, this permits `improved' interpolation arguments in the proofs of the Theorems~\ref{div-fwdrate} \textit{\textbf{ii)}} and~\ref{thm:schr} \textit{\textbf{ii)}} akin to \cite{GN20}. In the Schr\"odinger problem one \textit{may} expect this to achieve the minimax-optimal rate for estimating $f$ (see \cite{NW20, HK22} for the Sobolev case). In the divergence form model (\ref{div}) the minimax rate for estimating $f$ is yet unknown, cf. \cite{NVW18}. An extension of our results to high-dimensional priors is an interesting future direction.
\end{rem}

\section{Lower bounds for Gaussian priors with direct observations}\label{sec-LB}

Sections~\ref{sec:genres} and~\ref{sec:pderes} demonstrated that Laplace priors achieve certain rates of contraction for truth values belonging to $\ell^1$-type Besov bodies $B^\alpha_{11}$. While intuitively Laplace-type priors seem more suited towards modelling $B^\alpha_{11}$-functions than Gaussian priors, which in turn seem more inherently compatible with $\ell^2$-type Sobolev regularity, it has been an open question whether Gaussian priors are indeed fundamentally limited to a slower  convergence rate than Laplace priors. In Theorem~\ref{thm-lb} below, we derive such lower bounds for Gaussian process priors. 


For simplicity, we work in the standard Gaussian white noise model for nonparametric regression with \textit{direct} observations in one dimension, $\mathcal O=[0,1]$. Here, for $f_0:[0,1]\to \R$ the data are given as
\[	Y_\eps=f_0 + \eps \W, ~~~ \eps>0, \]
where $\W$ again denotes the standard Gaussian white noise process for the Hilbert space $L^2([0,1])$.

In the minimax estimation literature, Donoho and Johnstone's seminal paper \cite{DJ98} established that while linear estimators are able to achieve the minimax-optimal rate over $\ell^2$-type Sobolev classes, over the \emph{spatially inhomogeneous} Besov function classes $B^\alpha_{pq}\equiv B^{\alpha}_{pq}([0,1])$ with $1\leq p<2$, the best achievable convergence rate for linear estimators is \textit{slower}, by a polynomial factor, than the minimax rate. Specifically, Theorem 1 in \cite{DJ98} shows that if either $\alpha>1/p$, $1\le p< 2, 1\le q<\infty$ or $\alpha=p=q=1$, then the `linear' minimax rate is given by
\begin{equation}\label{lin-rate}
    \inf_{\hat f(Y_\eps)~\textnormal{linear}}~\sup_{f_0:\|f_0\|_{B^{\alpha}_{pq}}\le 1}E^\eps_{f_0}\|\hat f-f_0\|_{L^2}^2\simeq l_\eps^2,~~~\text{where}~l_\eps \coloneqq \eps^{\frac{2\alpha-(2-p)/p}{2\alpha+1-(2-p)/p}},
\end{equation}
while the `overall' minimax rate is given by
\begin{equation}\label{real-rate}
    \inf_{\hat f(Y_\eps)}~\sup_{f_0:\|f_0\|_{B^{\alpha}_{pq}}\le 1}E^\eps_{f_0}\|\hat f-f_0\|_{L^2}^2\simeq m_\eps^2,~~~\text{where}~m_\eps\coloneqq \eps^{\frac{2\alpha}{2\alpha+1}}.
\end{equation}

In line with this, when $f_0\in B^\alpha_{pp}$ for $1\le p<2$, the (upper bounds for) contraction rates from Theorem 5.9 in \cite{ADH21} display a gap between Laplace priors and Gaussian priors (which, with direct observations, constitute a `linear procedure'). In particular, Laplace priors can be tuned to achieve the 'overall' minimax rate $m_\eps$ (up to log-terms if $p>1$), while Gaussian priors appear to be limited by the linear-minimax rate $l_\eps$; see Section H in the supplement of \cite{ADH21} for a discussion. Our result confirms that this is \textit{not} an artifact of the proofs in \cite{ADH21}.

Let $\mathcal P_{GPP}$ denote the collection of all Gaussian process priors supported in $L^2([0,1])$.




\begin{thm}[Lower bound for GPPs]\label{thm-lb}
	Suppose that either $\alpha>1/p$, $1\le p< 2$ and $1\le q<\infty$ or $\alpha=p=q=1$, and let $\{\delta_\eps:0<\eps<1\}\subseteq \R_{>0}$ be some decreasing sequence as $\eps \to 0$. Assume moreover that $(\Pi_\eps: 0<\eps<1)\subseteq \mathcal P_{GPP}$ is a sequence of mean-zero Gaussian process priors supported on $L^2([0,1])$, such that for all $\eta>0$ we have the posterior contraction bound
	\begin{equation}\label{unif-contr}
		\sup_{f_0:\|f_0\|_{B^\alpha_{pq}}\le 1}P_{f_0}^\eps\big(\Pi_\eps\big( f: \|f-f_0\|_{L^2}\ge \delta_\eps \big| Y_\eps \big)\ge \eta \big) \xrightarrow{\eps\to 0} 0.
	\end{equation}
	 Then there exists some constant $c>0$ such that for the linear minimax rate $l_\eps=\eps^{\frac{2\alpha-(2-p)/p}{2\alpha+1-(2-p)/p}}$ given by (\ref{lin-rate}), we have \[ \delta_\eps \ge cl_\eps,~~ 0<\eps<1. \]
\end{thm}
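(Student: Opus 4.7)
I aim to reduce the problem to the Donoho--Johnstone linear minimax lower bound \eqref{lin-rate}. Since the prior $\pe$ is a centered Gaussian on $L^2([0,1])$ and the white noise likelihood is Gaussian, the posterior $\pe(\cdot\,|\,Y_\eps)$ is again Gaussian with a \emph{data-independent} covariance $\Sigma^\eps_\textnormal{post}$, and the posterior mean $\hat f_\eps \coloneqq \E_{\pe}[f\,|\,Y_\eps]$ takes the form $\hat f_\eps = T_\eps Y_\eps$ for a bounded linear map $T_\eps$ acting on the iso-normal process $Y_\eps$. In particular $\hat f_\eps$ is a \textit{linear} estimator, with error decomposition
\[
\hat f_\eps - f_0 \,=\, \tilde b(f_0) + \tilde W_\eps, \qquad \tilde b(f_0) \coloneqq (T_\eps - I)f_0, \qquad \tilde W_\eps \coloneqq \eps T_\eps \W,
\]
where $\tilde b(f_0)$ is deterministic given $f_0$ and the law of $\tilde W_\eps$ under $P_{f_0}^\eps$ is a centered Gaussian on $L^2$ not depending on $f_0$. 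The plan is to show that $\hat f_\eps$ achieves uniform $L^2$ mean-squared error $\lesssim \delta_\eps^2$ over the unit ball of $B^\alpha_{pq}$, and then to conclude via \eqref{lin-rate}.

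\textbf{From contraction to MSE.} Applying \eqref{unif-contr} with, say, $\eta = 1/4$ gives that for all sufficiently small $\eps$, uniformly over $\{f_0:\|f_0\|_{B^\alpha_{pq}}\le 1\}$, with $P_{f_0}^\eps$-probability at least $3/4$ the posterior puts mass at least $3/4$ on $\{f:\|f-f_0\|_{L^2}\le \delta_\eps\}$. On this event, using the Gaussianity of the posterior together with the symmetry $Z\stackrel{d}{=}-Z$ for $Z\sim\mathcal N(0,\Sigma^\eps_\textnormal{post})$, a union bound gives that both $\|(\hat f_\eps - f_0)+Z\|_{L^2}\le \delta_\eps$ and $\|(\hat f_\eps - f_0)-Z\|_{L^2}\le \delta_\eps$ occur jointly with posterior probability $\ge 1/2$, so by the triangle inequality $\|\hat f_\eps - f_0\|_{L^2}\le \delta_\eps$. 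Repeating the same symmetrization, now with respect to $\tilde W_\eps \stackrel{d}{=} -\tilde W_\eps$ and applied to the $P_{f_0}^\eps$-high-probability bound $\|\tilde b(f_0)+\tilde W_\eps\|_{L^2}\le \delta_\eps$, yields the \emph{deterministic} estimate $\|\tilde b(f_0)\|_{L^2}\le \delta_\eps$ uniformly in $f_0$; consequently $\|\tilde W_\eps\|_{L^2}\le 2\delta_\eps$ holds with $P_{f_0}^\eps$-probability $\ge 3/4$.

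\textbf{Conversion to MSE and conclusion.} The remaining step is a reverse-Chebyshev lemma for centered Gaussian random vectors $W$ in a separable Hilbert space: if $\P(\|W\|^2\le s)\ge 3/4$, then $\E\|W\|^2\le Cs$ for a universal constant $C$. This can be proved by a case split on the largest covariance eigenvalue $\lambda_1$: either $\mathrm{Var}(\|W\|^2)= 2\sum_i \lambda_i^2$ is small relative to $(\trace\Sigma)^2$, in which case Chebyshev applied to $\|W\|^2$ suffices; or $\lambda_1$ captures a constant fraction of $\trace\Sigma$, in which case projecting $W$ onto its top eigenvector and comparing with the $3/4$-quantile of $\chi^2_1$ gives the bound. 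Combining, $\sup_{\|f_0\|_{B^\alpha_{pq}}\le 1}\E_{f_0}^\eps\|\hat f_\eps - f_0\|_{L^2}^2 \lesssim \delta_\eps^2$ for all small $\eps$. Since $\hat f_\eps$ is linear in the data, \eqref{lin-rate} forces this supremum to be at least $c\, l_\eps^2$, hence $\delta_\eps\gtrsim l_\eps$ for small $\eps$; the remaining bounded range of $\eps$ is absorbed into the constant. The main technical obstacle is the reverse-Chebyshev lemma for Gaussians, while the conceptual crux is the linearity of the posterior mean, which is exactly where the Donoho--Johnstone gap between linear and general estimators kicks in.
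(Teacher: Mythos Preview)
Your proposal is correct and shares the paper's overall architecture: identify the posterior mean as a linear estimator, show it attains uniform $L^2$-MSE $\lesssim \delta_\eps^2$ over the Besov ball, and invoke the Donoho--Johnstone linear minimax lower bound~\eqref{lin-rate}. The route you take to the MSE bound, however, is genuinely different from the paper's.

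The paper proceeds as follows: it uses Anderson's lemma to identify the posterior mean $M_\eps$ with the centre of the smallest ball of posterior mass $\ge 1/2$, deduces from~\eqref{unif-contr} that $P_{f_0}^\eps(\|M_\eps-f_0\|\ge 2\delta_\eps)\to 0$, then argues by contradiction that the weak variance $\sup_{\|\psi\|\le 1}\mathrm{Var}\,\langle M_\eps-f_0,\psi\rangle = o(\delta_\eps^2)$, and finally applies the Borell--Sudakov--Tsirelson inequality plus tail integration to upgrade to an MSE bound. Your argument instead splits the error explicitly into a deterministic bias $\tilde b(f_0)$ and a centred Gaussian noise $\tilde W_\eps$, and exploits the symmetry $Z\stackrel{d}{=}-Z$ twice (once at the posterior level, once at the data level) to extract a \emph{deterministic} bias bound $\|\tilde b(f_0)\|\le \delta_\eps$ and a high-probability noise bound $\|\tilde W_\eps\|\le 2\delta_\eps$, before converting the latter to a second-moment bound via your reverse-Chebyshev lemma for Gaussian quadratic forms. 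This is more elementary: it avoids Anderson's lemma, avoids the Borell--Sudakov--Tsirelson inequality, and sidesteps the paper's somewhat indirect contradiction argument for the weak variance. The double symmetrisation is a clean trick that extracts the deterministic bias directly; the reverse-Chebyshev lemma (via the $\lambda_1$ case split) is a nice replacement for Gaussian concentration here. The paper's approach, on the other hand, is more standard and perhaps more robust if one later wanted sharper control of the fluctuations of $\|M_\eps-f_0\|$ around its mean.
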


Theorem~\ref{thm:gen-contraction} with $\mathcal G$ as the identity operator immediately implies that, for $p=q=d=1$ and $\alpha$ large enough, Laplace priors fulfill the contraction statement (\ref{unif-contr}) with the `actual' minimax rate of estimation $m_\eps = o(l_\eps)$ from (\ref{real-rate}) in place of $\delta_\eps$. This indicates that Laplace priors, from an information-theoretic minimax viewpoint, should be favoured over Gaussian priors in estimation problems over $\ell^1$-type Besov bodies. [For $1<p<2$, analogous statements can be derived using Theorem 5.9 in \cite{ADH21}.]

The paper \cite{IC08} by I. Castillo previously studied lower bounds for rates of contraction for GPPs $\Pi$ on Banach spaces $\mathbb B$. However, the approach taken there is fundamentally different from ours, since it bases on general abstract conditions about the `concentration function' 
of $\Pi$ [playing a key role in the derivation of upper bounds for rates of contraction for GPPs in \cite{VV08}], which can be characterized for \textit{specific} commonly used priors. Thus, in order to study lower bounds over \textit{all} Gaussian priors simultaneously, a proof technique which uses the aforementioned minimax estimation theory over Besov classes seems more naturally suited.

\begin{proof}[Proof of Theorem \ref{thm-lb}]
    Let $M_\eps$ denote the center of the smallest $L^2$-ball possessing posterior probability at least $1/2$. By the Gaussianity of the posterior and Anderson's lemma (cf. Section 2.4.1 of \cite{GN16}) $M_\eps$ equals the posterior mean, which in turn can be realized as $M_\eps= \mathcal A Y_\eps\in L^2(\mathcal O)$, where $\mathcal A$ is a linear (trace-class) operator, see, e.g., \cite[Theorem 3.2]{LZ00}. Let $\eta\in (0,1/2)$. Noting that on the event $\{\Pi_\eps\big( f: \|f-f_0\|_{L^2}\ge \delta_\eps \big| Y_\eps \big)< \eta \}$ the radius of that ball (around $M_\eps$) is at most $\delta_\eps$, we obtain from (\ref{unif-contr}) that 
	\begin{equation}\label{close-to-0}
		\begin{split}
			\sup_{f_0:\|f_0\|_{B^{\alpha}_{pq}} \le 1}&P_{f_0}^\eps\big(\|M_\eps-f_0\|_{L^2}\ge 2\delta_\eps\big)\\
			&\le  \sup_{f_0:\|f_0\|_{B^{\alpha}_{pq}} \le 1}P_{f_0}^\eps\Big(\Pi_\eps\big( f: \|f-f_0\|_{L^2}\ge \delta_\eps \big| Y_\eps \big)\ge \eta \Big) \xrightarrow{\eps\to 0} 0.
		\end{split}
	\end{equation}
	
	For the rest of the proof, let us fix $f_0\in B^{\alpha}_{pq}$ with norm at most $1$. Unless otherwise noted, the constants featuring below can be chosen uniformly over such $f_0$. Noting that 
	\[ \|M_\eps-f_0\|_{L^2}= \sup_{\psi:\|\psi\|_{L^2}\le 1}|\langle M_\eps-f_0, \psi \rangle|, \]
	for every $\rho>0$ there exists $\eps_\rho>0$ such that
	\begin{equation}\label{small}
        \sup_{\eps\le\eps_\rho,~\psi:\|\psi\|_{L^2}\le 1} P_{f_0}^\eps\big( \big|\langle M_\eps-f_0, \psi\rangle \big|\ge 2\delta_\eps \big) \le \sup_{\eps\le\eps_\rho} P_{f_0}^\eps\big( \|M_\eps-f_0\|_{L^2}\ge 2\delta_\eps \big)\le \rho.
	\end{equation}
	Since $\langle M_\eps-f_0, \psi\rangle $ is Gaussian, it follows that there exists a sequence $c_\eps\xrightarrow{\eps\to 0} 0$ such that \[\sigma^2_\eps:=\sup_{\psi:\|\psi\|_{L^2}\le 1}\text{Var}(\langle M_\eps-f_0, \psi\rangle)\le c_\eps \delta_\eps^2.\]
	Indeed, if this were not true, there would exist a subsequence $(\eps_l:l\ge 1),~\eps_l\to 0$ and a collection of functions $(\psi_l:l\ge 1)$ such that for some small constant $\ubar c$, $\text{Var}(\langle M_{\eps_l}-f_0, \psi_l\rangle)\ge \ubar c \delta_{\eps_l}^2$, for all $l\geq1$. Thus, there would exist another sufficiently small constant $\ubar c '=\ubar c '(\ubar c)>0$ such that for any $\zeta>0$,
	\begin{equation}
	    \begin{split}
	       \sup_{\eps\le\zeta,\psi:\|\psi\|_{L^2}\le 1}& P_{f_0}^{\eps}\big( \big|\langle M_\eps-f_0, \psi\rangle \big|\ge 2\delta_\eps \big) \ge \inf_{l\ge 1} P_{f_0}^{\eps_l}\Big( \frac{1}{\sqrt{\ubar c}\delta_{\eps_l}}\big|\langle M_{\eps_l}-f_0, \psi_l\rangle \big|\ge 2/\sqrt{\ubar c} \Big)\ge \ubar c',
	    \end{split}
	\end{equation}
	which contradicts (\ref{small}).
	
	It follows from the Borell-Sudakov-Tsirelson concentration inequality (see, e.g., Theorem 2.5.8. in \cite{GN16}) that 
	\begin{equation}\label{BST}
		P_{f_0}^\eps\big( \big| \|M_\eps-f_0\|_{L^2}- E_{f_0}^\eps\|M_\eps-f_0\|_{L^2} \big|\ge u \big) \le 2e^{-\frac{u^2}{2c_\eps \delta_\eps^2}}, ~~~ u>0.
	\end{equation}
	Choosing $u=\delta_\eps$, the above yields
	\begin{equation}\label{close-to-mean}
		\sup_{f_0:\|f_0\|_{B^\alpha_{pq}}\le 1}P_{f_0}^\eps\big( \big| \|M_\eps-f_0\|_{L^2}- E_{f_0}^\eps\|M_\eps-f_0\|_{L^2} \big|\ge \delta_\eps \big) \xrightarrow{\eps\to 0} 0.
	\end{equation}
	Combining (\ref{close-to-0}) and (\ref{close-to-mean}) yields that for some $c'>0$, $M_\eps$ satisfies
	\[ \sup_{f_0:\|f_0\|_{B^\alpha_{pq}}\le 1} E_{f_0}^\eps\|M_\eps-f_0\|_{L^2} \leq c' \delta_\eps. \]
	Using this risk bound as well as (\ref{BST}), we then also obtain a bound for the mean squared error too -- indeed, denoting the random variable $S_\eps=\|M_\eps-f_0\|_{L^2}$, we have 
	\begin{equation*}
	\begin{split}
	    E_{f_0}^\eps[S_\eps^2]&\le 2 \int_0^{2c'\delta_\eps}tP_{f_0}^\eps(S_\eps\ge t) dt + 2 \int_{2c'\delta_\eps}^{\infty}tP_{f_0}^\eps(S_\eps - E^\eps_{f_0}[S_\eps] \ge t- c'\delta_\eps ) dt\\
	    &\lesssim \delta_\eps^2 + \int_{2c'\delta_\eps}^{\infty} 	t	P_{f_0}^\eps\big( \big|S_\eps -E_{f_0}^\eps [S_\eps]\big| \ge t /2 \big)dt \lesssim \delta_\eps^2 + \int_{2c'\delta_\eps}^{\infty} t\exp\Big(-\frac{t^2}{8c_\eps\delta_\eps^2}\Big)dt\\
	    &\lesssim \delta_\eps^2 + \delta_\eps^2 \int_{2c'}^{\infty} u \exp\Big(-\frac{u^2}{8c_\eps}\Big)du\lesssim \delta_\eps^2.
	    \end{split}
	\end{equation*}
	Since $M_\eps$ is a linear estimator (in $Y_\eps$) and since the above inequality holds uniformly over $f_0\in \{f\in B^\alpha_{pq}:\|f\|_{B^\alpha_{pq}}\le 1 \}$, the claim now follows from the linear minimax rate characterised in Theorem 1 in \cite{DJ98}, where we have replaced $n$ there by $\eps^{-2}$ here.
\end{proof}

Note that for general non-linear inverse problems, whether Gaussian priors are limited to a polynomially slower than minimax contraction rate may depend subtly on the mapping properties of the forward map; even if $F_0$ is  spatially inhomogeneous, the image $\mathcal G(F_0)$ may not be. Conceivably, under additional assumptions one may extend our results to \textit{certain} inverse problems. For instance, a set of sufficient conditions would be for $\mathcal G$ to be a \textit{linear} map satisfying isomorphism properties of the type $C^{-1}\|\mathcal G(F) \|_{B^{\alpha+\kappa}_{11}}\le \|F \|_{B^{\alpha}_{11}}\le C\|\mathcal G(F) \|_{B^{\alpha+\kappa}_{11}},~(C>0)$ on the $p=q=1$ Besov scale (cf. \cite{KVV11} for similar assumptions on the Sobolev scale), but we do not pursue this direction further here.

\section{Proofs for Section~\ref{sec:genres}}\label{sec:proofsfor2}
\subsection{Proof of Theorem~\ref{thm:gen-contraction}}\label{sec:24proof}

	

Let $\peg$ and $\peg(\cdot|Y_\eps)$ denote the respective push-forward distributions of the prior $\pe$ and posterior $\pe(\cdot|Y_\eps)$ under the forward map $\mG$, both of which are supported on $\mG(\vi)\subset \ltwod$. We prove Theorem~\ref{thm:gen-contraction} by using contraction rate theory for nonparametric Bayes procedures \cite{GGV00,GV17}, specifically in the form of Theorem 7.3.5 in \cite{GN16} (see also Theorem 28 in \cite{N20}). Since our statement of interest regards the recovery of $\mG(F_0)$, we will utilise this result with $\Pi=\peg$ (and with $1/\eps^2$ in place of $n$), for which we verify the hypotheses (7.95)-(7.97) in \cite{GN16}. {Note that we follow \cite{N20,AN19} in constructing statistical tests directly for the $L^2$-distance (equal to the intrinsic `covariance metric' for the Gaussian white noise process), instead of appealing to a general Hellinger testing approach which has been employed in i.i.d. (random design) regression settings \cite{GN20,MNP19b}.} We also note that to obtain the quantitative posterior contraction bound in Theorem~\ref{thm:gen-contraction}, our proof below verifies a more precise type-I error bound than is required in (7.97) of \cite{GN16}. 
	
First, Lemma~\ref{lem:forwardpriormass} implies that there exist constants $B>1,C>0$ such that for $\tde:=B\delta_\eps$, we have
\begin{equation}\label{eq:sbg}\peg\big(\mG(F) : \norm{\mG(F)-\mG(F_0)}_{\ltwod}\leq\tde\big)\geq e^{-C \tde^2/\eps^2},\end{equation}
	which verifies (7.95) in \cite{GN16}. Second, choosing $\mM_\eps\coloneqq\mG(\mAe)$ with $\mAe$ from (\ref{eq:mat}) below, we see that the `sieve set' condition in (7.96) from \cite{GN16} is also fulfilled. Indeed, by Lemma~\ref{lem:sieve} there exists some (sufficiently large) $M>1$ in the definition of $\mAe$ as well as some $L'>0$, such that
	\begin{equation}\label{eq:sieveg}\peg(\mM_\eps^c)=\pe(\mAe^c)\leq L' e^{-(C+4)\tde^2/\eps^2}.\end{equation}

	
	It remains to verify the existence of statistical tests $\Psi=\Psi(Y_\eps)$ satisfying
	\begin{equation}\label{eq:exam}
		P_{F_0}^\eps( \Psi =1 ) + \sup_{\varphi\in \mM_\eps: \|\varphi -\mG(F_0) \|_{\ltwod}\ge \cons\tde } P_{F}^\eps( \Psi =0 )\le L'e^{-(C+4)\tder}, 
	\end{equation}
	where $\cons>0$ is some constant to be chosen below. To construct those tests, denote $\kmap:=\supp(\chi)$ (with $\chi$ as in Assumption~\ref{ass:truth}), and for $B=\norm{F_0}_{\ba}\vee M^2$ (where $M$ is as in the definition of $\mAe$), define $\mathcal J=\mathcal J_{\lambda,\eps}$ and $\tv_B$ as in \eqref{Jdef} and \eqref{eq:vb} below, respectively. Then, set
	\begin{equation*}
		\begin{split}
			\Psi(Y_\eps):=\mathbbm{1} \Big(\| \mG(\hat F)-\mG(F_0) \|_{\ltwod} \ge \cons\tde / 2 ~~\text{for some}~~\hat F\in \arg\max_{F\in \tv_B}\mathcal J(F)\Big).
		\end{split}
	\end{equation*}

	To control the first term in (\ref{eq:exam}), noting that $\supp(F_0)\subseteq K\subset\tilde K$, we apply Theorem~\ref{thm:plsrbddnorm} below with $F_\dagger=F_*=F_0$ to see that there exist sufficiently large constants $D', C'>0$ such that
	\begin{align}\label{type-I-bound}
	P_{F_0}^\eps\Big(\exists \hat{F}&\in \arg\max_{F\in \tv_B}\mathcal J(F) \;\text{s.t.}\; \norm{\mG(\hat F)-\mG(F_0)}_{\ltwod}\geq D'\tde/2\Big)\leq C' e^{-(C+4)\tder}.
	\end{align}

	To control the type-II errors in (\ref{eq:exam}), let $F\in \mA_\eps$ be such that $\norm{\mG(F) -\mG(F_0)}_{\ltwod}\ge \cons\tde$ [for $\cons$ yet to be chosen], and let $F=F_1+F_2$ be the decomposition from \eqref{eq:mat}. By definition of $\mAe$, using the continuous embedding $ \ba\subseteq \cb$ and noting that $\sup_{F\in \mAe}\|F_2\|_{B^\alpha_{11}(\mO)}<\infty$, the local Lipschitz estimate (\ref{eq:local-lip}) yields
	\begin{equation*}
	    \begin{split}
	        \sup_{F\in\mAe}\norm{\mG(F_2)-\mG(F)}_{\ltwod}
	        \lesssim \sup_{F\in\mAe}\|F_1\|_{\hk}\lesssim \delta_\eps.
	    \end{split}
	\end{equation*} 
	Recalling the definition \eqref{eq:tals} of $\tals$ with $\lambda\simeq \de^2$, we thus obtain
	\[ \sup_{F\in \mAe} \tau_\lambda^2(F_2,F) \lesssim \delta_\eps^2. \]
	Using this, and by another application of Theorem~\ref{thm:plsrbddnorm}, this time with $F_*=F_2$ and $F_\dagger=F$ [and again $\kmap=\supp(\chi)$], there exists some $\conu>0$ such that
	\begin{equation*}
	\sup_{F\in \mAe}P_{F}^\eps\Big(\exists \hat{F}\in \arg\max_{F\in \tv_B}\mathcal J(F)~\text{s.t.}~\norm{\mG(\hat F)-\mG(F)}_{\ltwod}\geq \conu\tde \Big)\lesssim e^{-(C+4)\tder}.
	\end{equation*}

	Now, choosing $\cons > \max\{D',2 \conu\}$ yields that for any $F\in\mAe$ with
	$\norm{\mG(F) -\mG(F_0)}_{\ltwod}\ge \cons\tde$,
	\begin{align*}
	P_F^\eps&(\Psi(Y_\eps)=0)=
P_F^\eps\Big(\norm{\mG(\hat{F})-\mG(F_0)}_{\ltwod}< \cons\tde /2~\text{for all}~\hat{F}\in \arg\max_{F\in \tv_B}\mathcal J(F)\Big)\\
&\leq P_F^\eps\Big(\exists \hat{F}\in \arg\max_{F\in \tv_B}\mathcal J(F)~\text{s.t.}~\norm{\mG(\hat F)-\mG(F)}_{\ltwod}\geq \cons\tde/2 \Big)\lesssim e^{-(C+4)\tder},
\end{align*}
which, together with (\ref{type-I-bound}), proves the desired inequality \eqref{eq:exam} [possibly suitably enlarging $L'>0$ from (\ref{eq:sieveg})]. Summarizing, the displays \eqref{eq:sbg}, \eqref{eq:sieveg} and \eqref{eq:exam} verify the hypotheses of Theorem 7.3.5 in \cite{GN16}, yielding that for some $L>0$ [specifically $L=DB$],
\begin{equation*}
	\begin{split}
		\pe(F:&\norm{\mG(F)-\mG(F_0)}_{\ltwod}>L\de|Y_\eps)=\peg\big(\mG(F):\|\mG(F)-\mG(F_0)\|_{\ltwod}>L\de\big|Y_\eps\big)\xrightarrow{\eps\to 0}0,
	\end{split}
\end{equation*}
in $P_{F_0}^\eps$-probability.


The more quantitative convergence statement (\ref{eq:forwardcontraction}) follows from the proof of Theorem 7.3.5 in \cite{GN16}, which in turn is analogous to the proof of Theorem 7.3.1 \cite{GN16}. Indeed, an inspection of the latter proof shows that i) we may choose $\epsilon$ in the first display on p.575 of \cite{GN16} as $e^{-\delta_\eps^2/\eps^2}$, ii) our testing bound (\ref{type-I-bound}) can be used to bound the type-I error in (7.89) of \cite{GN16} by the order of $e^{-\delta_\eps^2/\eps^2}$, iii) the dominating probability in the proof of Theorem 7.3.5 arises from the `small ball estimate' in Lemma 7.3.4 of \cite{GN16}, which in our case is of the order $\eps^2/\delta_\eps^2\to 0$ as $\eps\to 0$. [We may thus choose $\mu = 2d/(2\alpha+2\kappa +d)$ in Theorem~\ref{thm:gen-contraction}.]

Finally we note that by the quantitative arguments from the preceding paragraph as well as \eqref{eq:sbg} and \eqref{eq:sieveg}, there exists $\mu>0$ such that $\mAe$ also satisfies the quantitative estimate
\[ P_{F_0}^\eps\big(\pe(\mAe^c\,\big|\,Y_\eps) \ge e^{-\delta_\eps^2/\eps^2} \big)\lesssim \eps^\mu,\]
which by the definition of $\mAe$ implies \eqref{eq:Cbconcentration}.
\qed

In the remainder of this section, we prove the auxiliary Lemmas~\ref{lem:forwardpriormass} and~\ref{lem:sieve}.


\begin{lem}\label{lem:forwardpriormass}
Let $\pe, \de$ and $F_0$ be as in Assumption~\ref{ass:truth}, and $\mG$ as in Assumption~\ref{ass:kgb}. Then for any $B>0$ sufficiently large, there exists $C>0$  such that 
\begin{equation}
\pe(F:\norm{\mG(F)-\mG(F_0)}_{\ltwod}\leq B\de)\geq e^{-C \de^2/\eps^2},
\end{equation}
for all $\eps>0$ small enough. 
The constant $C$ only depends on $F_0$ via $\|F_0\|_{B^\alpha_{11}}$; in particular it can be chosen uniformly over $F_0\in \vi \cap \{F\in B^\alpha_{11}: \|F\|_{B^{\alpha}_{11}}\le r,~\textnormal{supp}(F)\subseteq K \}$, $r>0$.
\end{lem}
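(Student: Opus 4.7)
The plan is to convert the prior small-ball estimate for the regression function $\mG(F)$ into a parameter-level small-ball estimate for $F$ via the local Lipschitz Assumption~\ref{ass:kgb}, and then to exploit two structural properties of the Laplace prior $\pe$: (a) a shift-type absolute continuity identity linking translates of $\tp$ by smooth functions to the $\toma$-norm (equivalent to $\ba$ for functions supported in $K$), and (b) the origin small-ball concentration for log-concave product measures. Both ingredients are packaged in Lemma~\ref{lem:shsb}.

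\emph{Step 1 (Lipschitz reduction).} By the embedding~\eqref{embedding} we have $\|F_0\|_{\cb} \lesssim \|F_0\|_{\ba}$, so I fix $R > 0$ sufficiently larger than $\|F_0\|_{\cb}$. Assumption~\ref{ass:kgb} then yields
\[
\|\mG(F) - \mG(F_0)\|_{\ltwod} \le C_R \|F - F_0\|_{\hk}
\]
whenever $F \in C^\beta_\chi(\mO)$ with $\|F\|_{\cb} \le R$. Choosing $B=C_R$, the statement reduces to proving the parameter-level lower bound
\[
\pe\bigl(F:\|F-F_0\|_{\hk}\le\de,\;\|F\|_{\cb}\le R\bigr)\ge e^{-C\de^2/\eps^2}
\]
for some constant $C>0$ depending only on $R$ (and hence on $\|F_0\|_{\ba}$).

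\emph{Step 2 (Shift in the Laplace prior).} Writing $F=\scaling\chi\tf$ with $\tf\sim\tp$ and $\scaling=\eps^2/\de^2$, and using $\chi\equiv 1$ on $K\supseteq\supp F_0$ to represent $F_0=\scaling\chi h_0$ with $h_0:=F_0/\scaling$, I apply the coordinate-wise density inequality $e^{|\xi|-|\xi-a|}\ge e^{-|a|}$ for standard Laplace distributions to obtain the translation bound $\tp(\tf-h_0\in A)\ge e^{-\|h_0\|_{\toma}}\tp(A)$ for any Borel $A$; this is essentially the content of Lemma~\ref{lem:shsb}. Since $\|h_0\|_{\toma}\lesssim\|F_0\|_{\ba}/\scaling\simeq\|F_0\|_{\ba}\de^2/\eps^2$, paying this acceptable cost reduces the task to the origin-centred estimate
\[
\pe\bigl(F:\|F\|_{\hk}\le c\de,\;\|F\|_{\cb}\le R/2\bigr)\ge e^{-C''\de^2/\eps^2},
\]
where the reduction of the $\cb$-radius from $R$ to $R/2$ absorbs $\|F_0\|_{\cb}$ via the triangle inequality.

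\emph{Step 3 (Origin small-ball and sieve event).} Lemma~\ref{lem:sieve} below, itself based on Talagrand's concentration inequality for log-concave product measures, will provide $\pe(\|F\|_{\cb}>R/2)\le e^{-\mu\de^2/\eps^2}$ for $R,\mu$ sufficiently large, so the inequality $\pe(A\cap B)\ge\pe(B)-\pe(A^c)$ reduces matters to the pure Laplace small-ball bound
\[
\pe\bigl(\|F\|_{\hk}\le c\de\bigr)\ge 2e^{-C''\de^2/\eps^2}.
\]
Since $\pe$ is the image of a log-concave product measure under a bounded linear map and $\|\cdot\|_{\hk}$ is a Hilbertian norm on the wavelet coordinates, this estimate follows by combining the origin small-ball for $\tp$ (available through Borell~\cite{CB74} or Talagrand~\cite{T94} and encoded in Lemma~\ref{lem:shsb}) with the scaling $\scaling=\eps^2/\de^2$. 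Uniformity over $\{F_0\in\vi:\|F_0\|_{\ba}\le r,\,\supp F_0\subseteq K\}$ is preserved throughout since only $\|F_0\|_{\ba}$ enters the shift cost in Step~2 and the choice of $R$ in Step~1.

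The main obstacle is Step~3: correctly calibrating the origin small-ball exponent for $\pe$ in the negative-Sobolev norm $\hk$ so that it matches the target $\de^2/\eps^2=\eps^{-2d/(2\alpha+2\kappa+d)}$ at radius $\simeq\de$ with scaling $\scaling=\eps^2/\de^2$. Unlike the Gaussian case this is governed by $\ell^1$-type isoperimetric inequalities; however, this is precisely the concentration property of Laplace priors isolated in Lemma~\ref{lem:shsb} (in the spirit of~\cite{ADH21}), so its application here becomes routine once Steps~1–2 have been carried out.
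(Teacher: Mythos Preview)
Your overall strategy matches the paper's proof: Lipschitz reduction to a parameter-level small ball, a decentering step via the $\toma$-shift inequality for $\tp$, and then a splitting into an origin small-ball estimate in $\hk$ plus a $\cb$-tail bound. However, your Step~3 points to the wrong tools. Lemma~\ref{lem:shsb} encodes \emph{only} the decentering inequality you already used in Step~2; it contains no origin small-ball statement. The paper instead invokes Lemma~\ref{lem:sb} (a small-ball bound for $\tp$ in $\hkr$-norm due to Aurzada~\cite{A07}), which gives $-\log\tp(\|F\|_{\hkr}\le\eta)\lesssim\eta^{-d/(\kappa+\alpha-d)}$. After the rescaling by $\scaling=\eps^2/\de^2$ the relevant radius becomes $\simeq\de^3/\eps^2$, and the calibration you describe as ``routine'' is the algebraic identity $(\de^3/\eps^2)^{-d/(\kappa+\alpha-d)}=\de^2/\eps^2$, which is exactly what pins down the choice of $\de$ in~\eqref{eq:deltaeps}; this step deserves to be displayed, not deferred. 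Similarly, the $\cb$-tail bound is not supplied by Lemma~\ref{lem:sieve} (a separate sieve-mass result) but directly by the Fernique-type Lemma~\ref{lem:fernique}, which rests on Borell's theorem for log-concave measures rather than Talagrand. With these corrections to the cited ingredients and the explicit exponent check, your argument is complete and coincides with the paper's.
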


\begin{proof}
 Let $B>0$ be fixed. Let $1\le\beta<\alpha-d$ as in Assumption~\ref{ass:kgb}, and notice that under Assumption~\ref{ass:truth}, $F_0\in \cb$. Let $M>\norm{F_0}_{\cb}\vee 1$ be a fixed constant, then by Assumption~\ref{ass:kgb} with $R=2M$, there exists $\crm>0$ sufficiently small such that 
\begin{align*}
\pe(F:\norm{\mG(F)-\mG(F_0)}_{\ltwod}\leq B\de)&\geq \pe(F:\norm{F-F_0}_{\hk}\leq B \crm\de, \norm{F-F_0}_{\cb}\leq M)\\
&=\pe(F:F-F_0\in S_1\cap S_2),
\end{align*}
where $S_1\coloneqq\{F: \norm{F}_{\hk}\leq B \crm\de\}\quad\text{and}\quad S_2\coloneqq\{F:\norm{F}_{\cb}\leq M\}.$

Combining with the definition of $\pe$, the fact that $\chi F_0=F_0$ (see Definition~\ref{def:prior} and Assumption~\ref{ass:truth}) and  Lemma \ref{lem:chibd} in the Supplement (asserting that $\norm{\chi F}_{\hk}\lesssim\norm{F}_{\hkr}$ and $\norm{\chi F}_{\cb}\lesssim\norm{F}_{\cbr}$), we find that there is some sufficiently small $c'>0$ such that
\begin{align*}
\pe(F:\norm{\mG(F)-\mG(F_0)}_{\ltwod}\leq B\de)
\geq\tp\Big(F: F-\der F_0\in \der(\tc_1\cap \tc_2)\Big),
\end{align*}
where $\tc_1\coloneqq\{F: \norm{F}_{\hkr}\leq c'B \crm\de\}\quad\text{and} \quad\tc_2\coloneqq\{F:\norm{F}_{\cbr}\leq c'M\}.$

Using Lemma~ \ref{lem:shsb} and display \eqref{eq:tom}, and the fact that $\norm{F_0}_{\tom}\lesssim \norm{F_0}_{\ba}$ (since $F_0$ is supported in $\mO$), we obtain that for some constant $\tilde{c}>0$,
\begin{align}\label{eq:C1C2}
\pe(F:\norm{\mG(F)-\mG(F_0)}_{\ltwod}\leq B\de)&\geq e^{-\frac{\tilde{c}\de^2}{\eps^2}\norm{F_0}_{\ba}}\tp\Big(\der(\tc_1\cap \tc_2)\Big)\nonumber\\
&\geq e^{-\frac{\tilde{c}\de^2}{\eps^2}\norm{F_0}_{\ba}}\big[\tp\Big(\der\tc_1\Big)-\tp\Big((\der\tc_2)^c\Big)\big].
\end{align}

We first compute a lower bound for $\tp(\der\tc_1)$. We have 
\[\tp\Big(\der\tc_1\Big)=\tp\big(F: \norm{F}_{\hkr}\leq c'B\crm \de^3 /\eps^2\big),\quad\text{where}~~
\de^3/\eps^2=\eps^{\frac{2\kappa+2\alpha-2d}{2\kappa+2\alpha+d}}.\] 
Since Assumption~\ref{ass:truth} implies that $d<\kappa+\alpha$, we have that $\de^3/\eps^2\to0$ as $\eps\to0$, whence it follows from Lemma~\ref{lem:sb} below that
\[-\log\tp\Big(\der\tc_1\Big)\lesssim \big(c'B \crm \de^3/\eps^2\big)^{-\frac{d}{\kappa+\alpha-d}}, \quad\text{where}~~
\big(\de^3/\eps^2\big)^{-\frac{d}{\kappa+\alpha-d}}=\de^2/\eps^2\to\infty.\]
Therefore, for some (potentially different) $c'>0$, we have that
\begin{equation}
\label{eq:C1}\tp\Big(\der\tc_1\Big)\geq \exp\big(-(c'B \crm)^{-\frac{d}{\kappa+\alpha-d}}\de^2/\eps^2\big).
\end{equation}
To upper bound $\tp\big((\der \tc_2)^c\big)$, we use Lemma \ref{lem:fernique} below to see that there exist $\cona, \conb>0$ (large and small enough, respectively) such that
\begin{equation}\label{eq:C2}
\tp\Big((\der \tc_2)^c\Big)\leq \cona e^{-\conb c' M\de^2/\eps^2}.
\end{equation}

Combining \eqref{eq:C1C2}, \eqref{eq:C1} and \eqref{eq:C2}, we find that by choosing $B$ sufficiently large, there exists $C>0$ such that
\begin{align*}
\pe(F&:\norm{\mG(F)-\mG(F_0)}_{\ltwod}\leq B\de)\\
&\gtrsim e^{-\tilde{c}\der\norm{F_0}_{\ba}}\big(e^{-(c'B \crm)^{-\frac{d}{\kappa+\alpha-d}}\de^2/\eps^2}- e^{-\conb c' M\de^2/\eps^2}\big)\gtrsim e^{-C \de^2/\eps^2}.
\end{align*}
Finally, we may enlarge $C$ so that for $\eps$ small enough, the last bound holds with unit constant in front of the exponential.
\end{proof}


\begin{lem}\label{lem:sieve}
Let $\pe, \de$ and $\chi$ be as in Assumption~\ref{ass:truth}. 
For any $C>0$, there exist sufficiently large constants $M, L>1$, such that for all $\eps>0$ small enough, the set
\begin{equation}\label{eq:mat}
	\begin{split}
	\mAe\coloneqq\Big\{F=F_1+F_2: ~ \norm{F_1}_{\hk}\leq \delta_\eps, ~\norm{F_2}_{\ba}\leq M^2, ~\norm{F}_{\cad}\leq M,~~~&\\ \supp(F_i)\subseteq\supp(\chi), i=1,2&\Big\}
	\end{split}
\end{equation}
satisfies
\begin{equation}\label{eq:sievemass}
\pe(\mAe^c)\leq L e^{-C\de^2/\eps^2}.
\end{equation}
\end{lem}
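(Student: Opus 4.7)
The plan is to prove the sieve set mass bound via a dyadic high--low frequency decomposition of the wavelet expansion, combined with three separate concentration arguments and a union bound. First I would fix a resolution level $K_\eps \in \mathbb N$ with $2^{dK_\eps}\asymp\delta_\eps^2/\eps^2 = 1/\rho$, which is the natural balancing scale at which the low-frequency band contains $N_\eps \asymp 1/\rho$ wavelet coefficients. For a draw $F=\rho\chi\tilde F$ with $\tilde F=\sum_{k,l}2^{(d/2-\alpha)k}\xi_{kl}\psi_{kl}$, set
\[ F_2 := \rho\chi \sum_{k\le K_\eps}\sum_{l=1}^{L_k} 2^{(d/2-\alpha)k}\xi_{kl}\psi_{kl},\qquad F_1 := F - F_2. \]
Both components are automatically supported in $\supp(\chi)$, so $\mAe^c$ is contained in the union of the three events $A_1:=\{\|F_1\|_{\hk}>\delta_\eps\}$, $A_2:=\{\|F_2\|_{\ba}>M^2\}$ and $A_3:=\{\|F\|_{\cad}>M\}$. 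By the union bound it then suffices to bound each at rate $\lesssim e^{-C'\delta_\eps^2/\eps^2}$, absorbing the three-fold sum into $L$.

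For event $A_1$, I would invoke the multiplier estimate (Lemma~\ref{lem:chibd}) together with the wavelet characterization of $\hkr$ to obtain
\[ \|F_1\|_{\hk}^2 \lesssim \rho^2 \sum_{k>K_\eps}\sum_l 2^{(d-2\alpha-2\kappa)k}\xi_{kl}^2. \]
A short computation using the choice of $K_\eps$ shows that the expectation of the RHS is $\asymp\delta_\eps^2$; a Bernstein-type inequality for the weighted sum of sub-exponential $\xi_{kl}^2$'s then yields exponential concentration at the critical scale $\delta_\eps^2/\eps^2$, since both the ``variance'' and the ``sup-norm'' terms in the Bernstein exponent evaluate to $\delta_\eps^2/\eps^2$ precisely for this $K_\eps$. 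Taking $K_\eps$ with a slightly larger implicit constant so that the expectation lies strictly below $\delta_\eps^2$ yields the required bound on $\pe(A_1)$. Event $A_2$ I would handle similarly but more cheaply: Besov multiplier estimates give $\|F_2\|_{\ba}\lesssim\rho\|\tilde F_{\le K_\eps}\|_{\bard}\asymp\rho\sum_{k\le K_\eps}\sum_l|\xi_{kl}|$, which is $\rho$ times a sum of $N_\eps$ i.i.d.\ $\mathrm{Exp}(1)$ variables; a Cram\'er--Chernoff bound then gives $\pe(A_2)\le e^{-cN_\eps}=e^{-c\delta_\eps^2/\eps^2}$ provided $M$ is chosen as a sufficiently large constant (independent of $\eps$). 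Finally, $A_3$ is handled directly by the Fernique-type bound of Lemma~\ref{lem:fernique}: since $\cad$ is of the form $C^{\beta}$ with $\beta=\alpha-d-1<\alpha-d$, that lemma gives $\pe(A_3)\lesssim e^{-\mathrm{const}\cdot M\delta_\eps^2/\eps^2}\le e^{-C\delta_\eps^2/\eps^2}$ once $M$ is large.

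The main obstacle I expect is the concentration bound for $A_1$: the $\xi_{kl}^2$'s are only sub-exponential (not sub-Gaussian), so Gaussian concentration is unavailable and one must carefully track both Bernstein terms. The content of the argument is that the specific balancing choice $2^{dK_\eps}\asymp 1/\rho$ is precisely what makes both terms scale as $\delta_\eps^2/\eps^2$, and hence what makes the resulting sieve mass estimate match the prior small-ball estimate from Lemma~\ref{lem:forwardpriormass}. Combining the three tail bounds via the union bound and enlarging $L$ to absorb constants then yields~\eqref{eq:sievemass}.
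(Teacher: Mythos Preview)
Your approach is correct and constitutes a genuinely different route from the paper's. The paper does not fix an explicit truncation level $K_\eps$; instead it splits $\pe(\mAe^c)\le \pe(\mR_\eps^c)+\pe(S_2^c)$, handles $S_2^c$ via Fernique exactly as you do for $A_3$, and then bounds $\pe(\mR_\eps^c)$ \emph{abstractly}: after passing through the cut-off via Lemma~\ref{lem:chibd}, it invokes Talagrand's two-level concentration inequality for log-concave product measures (Lemma~\ref{lem:tal}) with $A=\{\|\cdot\|_{\hkr}\le c\,\delta_\eps^3/\eps^2\}$ and $r\asymp M^2\delta_\eps^2/\eps^2$, using the approximation Lemma~\ref{lem:approx} to translate the intermediate $\hard$-ball (radius $\sqrt r$) back into the required $\bard$- and $\hkr$-balls, and bounding $\tp(A)$ from below via the small-ball Lemma~\ref{lem:sb}.

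What each approach buys: your explicit dyadic split plus Bernstein/Chernoff is more elementary and self-contained---no Talagrand, no small-ball asymptotics, no separate approximation lemma---and makes transparent that the balancing scale $2^{dK_\eps}\asymp 1/\rho$ is what synchronises all exponents. The paper's route is less computational and more robust: Lemma~\ref{lem:tal} applies to any log-concave product measure with the same $\tsh,\tom$ structure, so the argument would carry over with little change to other $p$-exponential priors, whereas your Bernstein computation is tied to the specific sub-exponential tails of $\xi_{kl}^2$. One point worth making explicit in your write-up is that for a \emph{given} target constant $C$ in~\eqref{eq:sievemass}, the Bernstein exponent on $A_1$ is only $c/\rho$ for a fixed $c$; to push $c\ge C$ you must enlarge the implicit constant in $2^{dK_\eps}\asymp 1/\rho$ (which in turn forces $M$ larger for $A_2$)---this is permitted by the statement but should be said.
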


\begin{proof}
We first note that 
\[\pe(\mA_{\eps}^c)\leq \pe(\mR_\eps^c)+\pe(S_2^c),\]
where $S_2=\{F:\norm{F}_{\cb}\leq M\}$ is as in the proof of Lemma~\ref{lem:forwardpriormass} for $\beta=\alpha-d-1$, and where $\mR_\eps$ is defined as
\begin{align}\label{eq:sieve}
\mR_\eps\coloneqq \{F=F_1+F_2:\;\norm{F_1}_{\hk}\leq \de, ~\norm{F_2}_{\ba}\leq  M^2, ~ \supp(F_i)\subseteq\supp(\chi), i=1,2 & \}.
\end{align}
By Lemma \ref{lem:chibd} in the Supplement and the definitions of $\tp, \pe$, and with $\tc_2$ also defined in the proof of Lemma~\ref{lem:forwardpriormass}, it follows from \eqref{eq:C2} that for $M$ large enough, 
\[\pe(S_2^c)\le\tp\Big((\der \tc_2)^c\Big)\lesssim e^{-C\de^2/\eps^2}.\] It thus remains to bound $\pe(\mR_\eps^c)$.

Again by the definitions of $\tp, \pe$ together with Lemma \ref{lem:chibd} of the Supplement, there exists a small enough constant $c>0$ such that 
\begin{align*}\pe(\mR_\eps)&=\tp\Big(F: \chi F\in \der\mR_\eps\Big)\\
&\geq \tp\Big(F:\chi F=\chi F_1+\chi F_2, ~\norm{\chi F_1}_{\hk}\leq \frac{\de^3}{\eps^2}, \norm{\chi F_2}_{\ba}\leq  M^2\der\Big)\\
&\geq \tp\Big(F= F_1+F_2:\norm{F_1}_{\hkr}\leq c\frac{\de^3}{\eps^2}, \norm{F_2}_{\bard}\leq c M^2\der, F_i\in{\rm span}\{\subcol\}, i=1,2\Big).
\end{align*}
By Lemma \ref{lem:approx} of the Supplement (which quantifies the approximation of $H^{\alpha-\frac{d}2}$ by $B^{\alpha}_{11}$ in $(H^\kappa)^\ast$-norm), there exists $c'>0$ sufficiently small such that
\begin{align*}
\pe(\mR_\eps)& \geq \tp\Big(F= F_1+F_2+F_3:~\norm{F_1}_{\hkr}\leq \frac{c\de^3}{2\eps^2}, \norm{F_2}_{\bard}\leq \frac{cM^2\de^2}{2\eps^2},\\
&~~~~~~~~~~~~~~~~\quad\qquad\qquad\norm{F_3}_{\hard}\leq \frac{c'cM\de}{2\eps}, ~ F_i\in{\rm span}\{\subcol\}, i=1,2,3\Big).
\end{align*}
Thus, for some small enough $c''>0$, an application of Lemma~\ref{lem:tal} below with $r=c''M^2\der$ and $A=\{F_1\in {\rm span}\{\subcol\}:\norm{F_1}_{\hkr}\leq \frac{c}2 \frac{\de^3}{\eps^2}\}$ yields that
\begin{equation}\label{eq:tal}
\pe(\mR_\eps^c)\leq\frac{1}{\tp(A)}\exp\Big(-\frac{c''M^2}{\Lambda}\der\Big),
\end{equation}
where $\Lambda>0$ is a fixed constant. Moreover, similarly to the computation of the lower bound on $\tp(\der \tc_1)$ in the proof of Lemma~\ref{lem:forwardpriormass}, Lemma~\ref{lem:sb} below implies that there exists a constant $k'>0$ such that 
\[\tp(A)\geq \exp\big(-(c k')^{-\frac{d}{\kappa+\alpha-d}}\de^2/\eps^2\big).\]
Using this, \eqref{eq:tal}, and choosing $M$ large enough, we obtain
\begin{equation}\label{eq:tal2}
\pe(\mR_\eps^c)\leq \exp\big(-\big(c''M^2\Lambda^{-1}-(c k')^{-\frac{d}{\kappa+\alpha-d}}\big)\de^2/\eps^2\big)\leq e^{-C\de^2/\eps^2}.
\end{equation}
\end{proof}


\subsection{Proof of Theorem~\ref{thm:plsfr}, part \textbf{ii)}}\label{sec:proof26}
The proof of part \textbf{i)} is delayed to Section \ref{sec:existproof}; let us first focus on the more involved proof of part \textbf{ii)}. To accommodate both the proof of Theorem~\ref{thm:plsfr} and that of Theorem~\ref{thm:plsrbddnorm} below, we adopt a setup which is slightly more general than that of Section~\ref{sec:PLS}. For any fixed, compact $\tilde{K}\subseteq \mO$, suppose that $\mW\subseteq \{F\in \vi\cap \ba: \supp(F)\subseteq \tilde K\}$ is a subset over which the objective $\mathcal J_{\lambda,\eps}$ is considered. We recall the functional
\[\tals(F_1,F_2)\coloneqq \norm{\mG(F_1)-\mG(F_2)}_{\ltwod}^2+\lambda\norm{F_1}_{\toma},~~~\lambda >0,\]
defined for any $F_1\in \mW$ and $F_2\in\vi$. Moreover, for any $F_\ast\in \mW$ and $\lambda, R>0$, define the sets
\begin{align}
    \label{eq:vast}
\vast(\lambda,R)&:=\{F\in \mW: \tau_\lambda^2(F,F_\ast)\leq R^2\},\\
\label{eq:dast}
\dast(\lambda,R)&:=\{\mG(F): F\in \vast(\lambda,R)\}.
\end{align}
For any normed vector space $(X,\|\cdot\|)$, any subset $A\subseteq X$ and any $\rho>0$, let $H(\rho, A, \norm{\cdot})$ denote the metric entropy of $A$ in $\norm{\cdot}$-distance [that is, $H(\rho, A, \norm{\cdot})=\log N(\rho, A, \norm{\cdot})$ where $N(\rho, A, \norm{\cdot})$ denotes the minimal number of closed $\norm{\cdot}$-balls of radius $\rho$ required to cover $A$]. Then, define
\begin{equation}\label{eq:jast}
\jast(\lambda,R)=R+\int_0^{2R}\sqrt{H(\rho, \dast(\lambda,R),\norm{\cdot}_{\ltwod})}d\rho.
\end{equation}

The following proposition regards concentration properties of penalized least squares estimators with $\ell_1$-type Besov penalty.

\begin{prop}\label{thm:genplsfr}
Suppose that data $Y_\eps$ arise from \eqref{eq:model} for some $F=F_\dagger\in\vi$. For any $F_\ast\in\mW$, let $\psast(\lambda,R)\geq \jast(\lambda,R)$ be some upper bound such that $R \mapsto \psast(\lambda,R)/R^2$ is non-increasing for $R>0$. Then, there exists a universal constant $\conf >0$ such that for all $\eps, \lambda, \delta>0$ satisfying 
\begin{equation}\label{eq:psast}
\conf\eps\psast(\lambda,\delta)\leq \delta^2
\end{equation}
and any $R\geq \delta$, 
\begin{align}
P^\eps_{F_\dagger}\Big(\tals(\hat{F}, F_\dagger)\geq 2(\tals(F_\ast, F_\dagger)+R^2) ~\text{for some}~\hat{F}\in \arg\max_{F\in \mW} & \jle (F)\Big)\leq \conf \exp\Big(-\frac{R^2}{\conf^2\eps^2}\Big).
\label{eq:plsconc}
\end{align}
Moreover, for any maximizer $\hat{F}$ of $\jle$ over $\mW$, we have for some universal constant $\conh>0$
\begin{equation}\label{eq:plsexp}
E^\eps_{F_\dagger}[\tals(\hat{F},F_\dagger)]\leq\conh(\tals(F_\ast,F_\dagger)+\delta^2+\eps^2).
\end{equation}
\end{prop}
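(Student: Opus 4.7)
The plan is to follow the classical \emph{basic inequality $+$ peeling $+$ Gaussian maximal inequality} blueprint from empirical process / $M$-estimation theory (cf.~\cite{VDG00, V01, NVW18}), adapted to our asymmetric penalty-plus-prediction functional $\tau_\lambda^2$. First, for any maximizer $\hat F\in\arg\max_{F\in\mW}\jle(F)$, the inequality $\jle(\hat F)\ge \jle(F_\ast)$ expands, after substituting $Y_\eps=\mG(F_\dagger)+\eps\W$ and rearranging, to the \emph{basic inequality}
\[
\tals(\hat F,F_\dagger)\le \tals(F_\ast,F_\dagger)+2\eps\ip{\W}{\mG(\hat F)-\mG(F_\ast)}_{\ltwod}.
\]
Setting $T^2:=\tals(F_\ast,F_\dagger)+R^2$, we see that on the event $\{\tals(\hat F,F_\dagger)\ge 2T^2\}$ the noise term must be at least $T^2$. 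The key difficulty is that $\hat F$ is random and may lie anywhere in $\mW$, so this has to be converted into a uniform bound.

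The second step is a peeling (slicing) decomposition: for integer $k\ge 1$ set
\[
E_k:=\Big\{\exists\,\hat F\in\arg\max_{F\in\mW}\jle(F):\;2^k T^2\le \tals(\hat F,F_\dagger)<2^{k+1}T^2\Big\}.
\]
On $E_k$, a triangle-type bound (using $\|\mG(\hat F)-\mG(F_\ast)\|^2\le 2\tals(\hat F,F_\dagger)+2\tals(F_\ast,F_\dagger)$ and $\lambda\|\hat F\|_{\toma}\le \tals(\hat F,F_\dagger)$) yields $\tals(\hat F,F_\ast)\le c\,2^k T^2$, so that $\mG(\hat F)\in\dast(\lambda,T_k)$ with $T_k:=c'\,2^{k/2}T$. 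Simultaneously, the basic inequality forces $2\eps\,\ip{\W}{\mG(\hat F)-\mG(F_\ast)}\gtrsim 2^k T^2$, hence
\[
P^\eps_{F_\dagger}(E_k)\le P\Big(\sup_{g\in \dast(\lambda,T_k)-\mG(F_\ast)}\ip{\W}{g}\ge c''\,2^k T^2/\eps\Big).
\]

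The third step bounds this Gaussian supremum by combining Dudley's entropy integral with Borell--Sudakov--Tsirelson concentration for Gaussian processes. Dudley's inequality yields that the expected supremum is $\lesssim \jast(\lambda,T_k)\le \psast(\lambda,T_k)$, and since $\sup_{F\in\vast(\lambda,T_k)}\|\mG(F)-\mG(F_\ast)\|_{\ltwod}\le T_k$ by definition of $\vast$, Borell's inequality gives
\[
P\Big(\sup_{g}\ip{\W}{g}\ge C_{\rm Dud}\,\psast(\lambda,T_k)+u\Big)\le \exp(-u^2/(2T_k^2)).
\]
At this point the monotonicity assumption and hypothesis \eqref{eq:psast} enter crucially: since $T_k\ge T\ge R\ge\delta$ and $\psast(\lambda,\cdot)/(\cdot)^2$ is non-increasing, one obtains $\eps\,\psast(\lambda,T_k)\le T_k^2/\conf$, so for $\conf$ large the deterministic drift $C_{\rm Dud}\,\eps\,\psast(\lambda,T_k)$ is absorbed into a small fraction of the target threshold $c''\,2^kT^2$. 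The remaining deviation $u\gtrsim 2^kT^2/\eps$ together with $T_k^2\lesssim 2^kT^2$ gives
\[
P^\eps_{F_\dagger}(E_k)\lesssim\exp(-c\,2^k T^2/\eps^2).
\]
Summing the resulting geometric series in $k\ge 1$ produces the tail bound \eqref{eq:plsconc} (with $T^2\ge R^2$). For the in-expectation statement \eqref{eq:plsexp}, one integrates the tail $P(\tals(\hat F,F_\dagger)>t)$ by reparameterizing $t=2(\tals(F_\ast,F_\dagger)+R^2)$ for $R\ge\delta$ and exploiting the Gaussian decay in $R^2/\eps^2$; the integral contributes $\mO(\eps^2)$ and the $R=\delta$ cut-off contributes $\delta^2$, giving the claimed $\tals(F_\ast,F_\dagger)+\delta^2+\eps^2$ bound.

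The principal obstacle I expect is bookkeeping in steps two and three: first, ensuring that the triangle-type inequality correctly embeds each peeled shell into $\vast(\lambda,T_k)$ despite $\tau_\lambda^2$ being asymmetric in its arguments (the $\lambda\|\cdot\|_{\toma}$ piece is only on the first entry), and second, calibrating the constants so that the monotonicity of $\psast(\lambda,R)/R^2$ together with \eqref{eq:psast} really dominates both the Dudley mean and the Borell deviation by a single geometric factor $2^k$, leaving room for the summation. Once these two bookkeeping points are settled, the rest is standard chaining.
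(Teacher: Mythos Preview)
Your proposal is correct and follows precisely the approach the paper has in mind: the paper omits the proof, noting only that it ``is proved using techniques from $M$-estimation, specifically from \cite{V01}'' and ``can be derived from Theorem~2.1 of \cite{V01} in essentially the same manner as Theorem~18 in \cite{NVW18}''. Your basic-inequality $+$ peeling $+$ Dudley/Borell blueprint is exactly that argument, and the bookkeeping concerns you flag (embedding the shells into $\vast(\lambda,T_k)$ and calibrating constants via the monotonicity of $\psast(\lambda,R)/R^2$) are the standard technical points handled in those references.
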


This result is proved using techniques from M-estimation, specifically from \cite{V01}. Since it can be derived from Theorem 2.1 of \cite{V01} in essentially the same manner as Theorem 18 in \cite{NVW18} (where Sobolev penalties are considered), the proof is omitted here.



The preceding proposition suggests that the convergence bounds for $\|\mG(\hat{F})-\mG(F_\dagger)\|_{L^2(\mathcal D)}$ in Theorem~\ref{thm:plsfr} can be obtained by a suitable bound $\psast(\lambda,R)$ (with sub-quadratic dependence on $R$) on the entropy integral $\jast(\lambda,R)$. For any given regularization parameter $\lambda=\lambda(\eps)$, the requirement \eqref{eq:psast} dictates the best possible choice $R^2=\delta^2$ in \eqref{eq:plsconc}; typical choices of $\lambda$ would balance $\delta^2$ with the size of the `approximation term' $\tals(F_\ast,F_\dagger)$, which in turn depends on the smoothness of $F_\dagger$. Here, $F_\ast$ can be thought of as a `best approximation' of $F_\dagger$ in the class $\mW$.

The following lemma provides the necessary bound for $\jast(\lambda,R)$, crucially using entropy estimates for Besov spaces as well as our local Lipschitz assumptions on $\mathcal G$. [See Lemma 19 in \cite{NVW18} for an analogous argument in the Sobolev case.]



\begin{lem}\label{lem:psast}
Let $s=(\alpha+\kappa)/d$ and let $\mW=\tv$, for $\tv$ defined in \eqref{eq:vi}. Under the assumptions of Theorem~\ref{thm:plsfr}, there exists a  constant $\coni>0$ such that for all $\lambda, R>0$ and $F_\ast\in\tv$,
\begin{equation}\label{upperbound}
    \psast(\lambda,R)\coloneqq R+\coni\big(R^{1+\frac1{2s}}\lambda^{-\frac1{2 s}}(1+(R^2/\lambda)^{\frac{\gamma}{2s}})\big)
\end{equation}
is an upper bound for $\jast(\lambda,R)$.
\end{lem}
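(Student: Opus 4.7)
The plan is to unpack the definition of $\jast$ by controlling the $L^2(\mD)$-metric entropy of $\dast(\lambda,R) = \mG(\vast(\lambda,R))$ via the $H^{-\kappa}$-entropy of $\vast(\lambda,R)$ and then integrating. The first step is to note that for any $F\in\vast(\lambda,R)$, the definition of $\tau_\lambda^2$ gives $\lambda\|F\|_{\toma}\le R^2$, hence $\|F\|_{\toma}\le R^2/\lambda$. Because all elements of $\mW=\tv$ are compactly supported in $\tilde K\Subset\mO$, this wavelet-sequence bound is equivalent (up to a fixed constant) to $\|F\|_{\ba}\lesssim R^2/\lambda$. Combined with the continuous embedding $\ba\subseteq C^\beta(\mO)$ from (\ref{embedding}) (valid because $\alpha>\beta+d$), this yields the uniform a priori control $\|F\|_{C^\beta}\lesssim R^2/\lambda$ on $\vast(\lambda,R)$.

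The second step is to turn this into a metric-entropy comparison. Applying the $(\kappa,\gamma,\beta)$-regularity assumption (\ref{eq:kgb}) to any pair $F_1,F_2\in\vast(\lambda,R)$ gives
\begin{equation*}
\|\mG(F_1)-\mG(F_2)\|_{L^2(\mD)}\lesssim \bigl(1+(R^2/\lambda)^\gamma\bigr)\,\|F_1-F_2\|_{H^{-\kappa}},
\end{equation*}
so any $\eta$-net of $\vast(\lambda,R)$ in $H^{-\kappa}$ produces an $L^2(\mD)$-net of $\dast(\lambda,R)$ at scale $C(1+(R^2/\lambda)^\gamma)\eta$. Hence
\begin{equation*}
H\bigl(\rho, \dast(\lambda,R), \|\cdot\|_{L^2(\mD)}\bigr)\le H\!\left(\tfrac{\rho}{C(1+(R^2/\lambda)^\gamma)}, \vast(\lambda,R), \|\cdot\|_{H^{-\kappa}}\right).
\end{equation*}

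The third step is to invoke a classical Besov metric-entropy estimate for the compactly supported embedding $\ba\hookrightarrow H^{-\kappa}$, which says that the unit ball of $\ba$ (restricted to functions with support in a fixed compact set of $\mO$) has $H^{-\kappa}$-entropy of order $\rho^{-d/(\alpha+\kappa)} = \rho^{-1/s}$; this is the content of the standard Triebel-type estimates (since the smoothness gap is $\alpha+\kappa$, while the Sobolev/integrability condition $\alpha-d>-\kappa-d/2$, i.e. $\alpha+\kappa>d/2$, is guaranteed by the assumption $\alpha\ge d/2+d\gamma-\kappa$). Rescaling the ball by $R^2/\lambda$ we obtain
\begin{equation*}
H\bigl(\rho,\dast(\lambda,R),\|\cdot\|_{L^2(\mD)}\bigr)\lesssim \Bigl[\tfrac{(R^2/\lambda)(1+(R^2/\lambda)^\gamma)}{\rho}\Bigr]^{1/s}.
\end{equation*}

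Finally, the fourth step is to insert this into the entropy integral in (\ref{eq:jast}). Since $2s=2(\alpha+\kappa)/d\ge 1+2\gamma>1$ we have $1/(2s)<1$, so $\int_0^{2R}\rho^{-1/(2s)}\,d\rho\lesssim R^{1-1/(2s)}$. Putting this together,
\begin{equation*}
\int_0^{2R}\sqrt{H(\rho,\dast(\lambda,R),\|\cdot\|_{L^2(\mD)})}\,d\rho\lesssim \bigl[(R^2/\lambda)(1+(R^2/\lambda)^\gamma)\bigr]^{1/(2s)}R^{1-1/(2s)},
\end{equation*}
which, after using $(1+x^\gamma)^{1/(2s)}\asymp 1+x^{\gamma/(2s)}$ and $R^{1-1/(2s)}(R^2)^{1/(2s)}=R^{1+1/(2s)}$, gives exactly the bound (\ref{upperbound}) after adding the $R$ term built into $\jast$. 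The main technical obstacle is the third step, i.e. citing the correct Besov-to-negative-Sobolev entropy estimate in the form required for the compactly supported wavelet penalty class (the ambient-$\R^d$ vs. $\mO$-subtlety being mediated by the uniform support in $\tilde K$); everything else is a direct consequence of the hypotheses. One should verify that the moment conditions on $\alpha,\kappa,\gamma$ exclude the borderline case $2s=1$ where the entropy integral would blow up.
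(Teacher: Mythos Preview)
Your proposal is correct and follows essentially the same route as the paper: bound $\vast(\lambda,R)$ by a $B^\alpha_{11}$-ball of radius $R^2/\lambda$, use the $(\kappa,\gamma,\beta)$-regularity to transfer an $H^{-\kappa}$-covering to an $L^2(\mD)$-covering of $\dast(\lambda,R)$ with blow-up factor $1+(R^2/\lambda)^\gamma$, apply the Edmunds--Triebel entropy estimate $H(\rho,B^\alpha_{11}(\mO;r),\|\cdot\|_{H^{-\kappa}(\mO)})\lesssim (r/\rho)^{1/s}$, and integrate. The only point the paper treats more explicitly than you do is the dual-space subtlety you flag at the end: the forward regularity (\ref{eq:kgb}) is stated with $\|\cdot\|_{(H^\kappa(\mO))^*}$, whereas the entropy bound lives in $H^{-\kappa}(\mO)=(\tilde H^\kappa(\mO))^*$, and the paper bridges this via the inequality $\|\phi\|_{(H^\kappa(\mO))^*}\lesssim \|\phi\|_{H^{-\kappa}(\mO)}$ for $\phi$ supported in $\tilde K$, proved by a short cut-off argument.
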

\begin{proof}
Let $\rho, \lambda, R>0$. We start by estimating the metric entropy appearing in $\jast(\lambda,R)$. 
Note that by the definition of $\tals$, we have
\begin{equation}\label{eq:vir}\vast(\lambda,R)\subseteq \tv(R^2/\lambda) \coloneqq\{ F\in \bk: \|F\|_{B^\alpha_{11}(\mO)} \le R^
2/\lambda \}.
\end{equation}
Furthermore, note that there exists $c_1>0$ such that for all $\phi\in\tv$, it holds
\begin{equation}\label{eq:hkbd}
    \norm{\phi}_{\hk}\leq c_1\norm{\phi}_{H^{-\kappa}(\mO)}.
\end{equation}
Indeed, let $\chi\in C^\infty_c(\mO)$ be a smooth cut-off function, with $\chi\equiv1$ on $\tilde{K}$, where $\tilde{K}$ is as in \eqref{eq:vi}. Then for any $\phi\in\tv$, the definition of the dual norms together with Lemma \ref{lem:chibd} in the Supplement and the definition of $\tilde{H}^\kappa(\mO)$ in \eqref{eq:htilde}, give
\begin{align*}\norm{\phi}_{\hk}=\sup_{\substack{\psi\in H^{\kappa}(\mO)\\\norm{\psi}_{H^\kappa(\mO)}\leq1}}|\int_\mO\psi\phi\chi| \lesssim \sup_{\substack{\tilde{\psi}\in \tilde{H}^{\kappa}(\mO)\\\norm{\tilde{\psi}}_{H^\kappa(\mO)}\leq1}}|\int_\mO\tilde{\psi}\phi|= \norm{\phi}_{(\tilde{H}^\kappa(\mO))^\ast}.\end{align*}
In particular, \eqref{eq:hkbd} follows once we recall that $H^{-\kappa}(\mO):=(\tilde{H}^\kappa(\mO))^\ast.$

Denote by $B^\alpha_{11}(\mO; r)$ the ball of radius $r>0$ in $\ba$. By Theorem 2 in Section 3.3.3 and Remark 1 in Section 1.3.1 of \cite{ET08}, and since by assumption $\alpha > d/2-\kappa$, we have
\[H\big(\rho,B^\alpha_{11}(\mO; r),\norm{\cdot}_{H^{-\kappa}(\mO)}\big)\lesssim (r/\rho)^{1/s},~~ \rho,r >0.\]


Now for some $\conk>0$ to be chosen below, let $m\coloneqq \conk\big(1+(R^2/\lambda)^{\gamma}\big)$. Then for $\conj$ from (\ref{eq:hkbd}) there exists some $\conl>0$ such that we can choose elements $F_1,\dots, F_N\in\vast(\lambda, R)$ with $N\leq \exp\big(\conl(\frac{R^{2}m}{\lambda\rho})^\frac1s\big),$
for which $\vast(\lambda, R)$ can be covered using the balls
\[\tilde{S}_i\coloneqq\big\{\psi\in\vast(\lambda,R):\norm{\psi-F_i}_{H^{-\kappa}(\mO)}\leq \frac{\rho}{m\conj}\big\}, \;i=1,\dots, N.\]

Now using \eqref{eq:hkbd}, assumption \eqref{eq:kgb} and the embedding of $\ba$ into $\cb$ for $\alpha>\beta+d$ (see the discussion after Assumption~\ref{ass:truth}), there exists $\conk>0$ large enough such that for all $i=1,\dots,N$ and for all $F\in\tilde{S}_i$,
\begin{equation}\label{eq:gbnd}
\begin{split}
\norm{\mG(F)-\mG(F_i)}_{\ltwod}&\leq m\norm{F- F_i}_{\hk}\le m\conj\norm{F-F_i}_{H^{-\kappa}(\mO)}\leq \rho,
\end{split}
\end{equation}
whence the balls 
\begin{equation}\label{eq:Gcover}S_i\coloneqq \{\psi\in\dast(\lambda,R): \norm{\psi-\mG(F_i)}_{\ltwod}\leq \rho\}, \;i=1,\dots,N,\end{equation}
form a covering of $\dast(\lambda,R)$. We thus obtain the bounds
\begin{align}\label{eq:entropybd}
H(\rho, \dast(\lambda,R),\norm{\cdot}_{\ltwod})&\lesssim (\frac{R^{2}m}{\lambda\rho})^\frac1s, \\
\int_0^{2R}H^\frac12(\rho,\dast(\lambda,R),\norm{\cdot}_{\ltwod})d\rho&\lesssim\int_0^{2R}(\frac{R^{2}m}{\lambda \rho})^{\frac1{2s}}d\rho\lesssim \lambda^{-\frac1{2 s}}R^{1+\frac1{2 s}}(1+(R^2/\lambda)^\frac\gamma{2 s}),\nonumber
\end{align}
where we used that $s>1/2$ since $\alpha>d/2-\kappa$. This shows that there exists $\coni>0$ such that for $\psast$ as defined in the statement, it holds that $\jast(\lambda,R)\leq\psast(\lambda,R)$ and the proof is complete.
\end{proof}

Part \textbf{ii)} of Theorem~\ref{thm:plsfr} now follows from combining Proposition~\ref{thm:genplsfr} with the preceding lemma.

\begin{proof}[Proof of Theorem~\ref{thm:plsfr}, part \textbf{ii)}]

We apply Proposition~\ref{thm:genplsfr} with $\mW=\tv$, where $\tv$ is defined in \eqref{eq:vi}.
Let $\Psi_*(\cdot,\cdot)$ be the upper bound given by Lemma~\ref{lem:psast}. First, using the assumption that $\alpha\ge  d/2+d\gamma-\kappa$, we see that for every $\lambda>0$, $R\mapsto \Psi_*(\lambda,R)/R^2$ is non-increasing. Second, we verify that for some large enough constant $\tilde c$, the requirement (\ref{eq:psast}) is fulfilled for any $0<\eps<1$ and with $\delta=\tilde c\delta_\eps$ and 
$\lambda=\tilde c^2\delta_\eps^2$ (where $\de$ was defined in \eqref{eq:deltaeps}). Indeed, with $c$ from (\ref{upperbound}), we may rephrase (\ref{eq:psast}) as the requirement
\[c_1\eps\big(\delta +2c\delta^{1+\frac{1}{2s}}\lambda^{-\frac{1}{2s}}\big)=c_1\eps\big(\tilde c\delta_\eps+2c\tilde c^{1-\frac 1{2s}} \delta_\eps^{1-\frac{1}{2s}}\big) \le \tilde c^2\delta_\eps^2,~~~~ s=(\alpha+\kappa)/d. \] 
By standard manipulations of the exponents and since the expression in the middle scales at most linearly in $\tilde c$, we see that the above is fulfilled for $\tilde c$ large enough.

Since the preceding considerations were independent of $F_*\in \tilde V$, we have thus verified the assumptions to apply Proposition~\ref{thm:genplsfr}, which immediately implies Theorem~\ref{thm:plsfr}.
\end{proof}

\subsection{Proof of Theorem \ref{thm:plsfr}, part \textbf{i)}} \label{sec:existproof}
This proof combines ideas from \cite[Lemma 4.1]{ABDH18} and \cite[Section 7]{NVW18}. We begin with the following lemma which will be used to show continuity of the objective $\mathcal J$.
\begin{lem}\label{lem:exist}
	Suppose $Y_\eps$, $\mG$ and $\alpha$ are as in the statement of Theorem \ref{thm:plsfr}. Let $\bk$ and $\bk(R)$ be as in \eqref{eq:vi} and \eqref{eq:vir}, and for some $\frac{d}2<\tilde{\alpha}<\alpha$ let $\dist$ be the distance induced by the $B^{\tilde{\alpha}}_{11}(\mO)$-norm. 
	Then there exists a version of the Gaussian white noise process $\W$ in $\ltwod$ such that for all $R>0$, the map 
	\[\Psi:(\bk(R), \dist)\to \R,~~F\mapsto \ip{Y_\eps}{\mG(F)}_{\ltwod},\]
	is almost surely uniformly continuous.
\end{lem}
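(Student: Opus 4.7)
The plan is to decompose $\Psi$ into a deterministic piece and a centered Gaussian piece, handle each separately, and then exploit compactness of $\tv(R)$ in $\dist$ to upgrade pointwise continuity to uniform continuity. Writing
$$\ip{Y_\eps}{\mG(F)}_{\ltwod} = T(F) + \eps G(F), \qquad T(F):= \ip{\mG(F_\dagger)}{\mG(F)}_{\ltwod}, \quad G(F):=\W(\mG(F)),$$
with $T$ deterministic and $G$ a centered Gaussian process indexed by $F \in \tv(R)$, it suffices to show that each summand is (almost surely) uniformly continuous on $(\tv(R),\dist)$.

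For the deterministic term $T$, I would combine Cauchy--Schwarz with \eqref{eq:kgb}; since $\ba \hookrightarrow \cb$ (valid as $\alpha > \beta + d$), elements of $\tv(R)$ are uniformly bounded in $\cb$, hence
$$|T(F_1) - T(F_2)| \lesssim \|\mG(F_\dagger)\|_{\ltwod}(1 + R^\gamma)\|F_1 - F_2\|_{\hk}.$$
The embeddings $\bta \hookrightarrow L^2(\mO) \hookrightarrow \hk$ (valid for $\tilde\alpha > d/2$ and $\kappa \ge 0$) then show that $T$ is Lipschitz in $\dist$ on $\tv(R)$.

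For the Gaussian process $G$, the intrinsic $L^2(\P)$-metric $d(F_1,F_2) := \|\mG(F_1)-\mG(F_2)\|_{\ltwod}$ satisfies $d \lesssim \dist$ on $\tv(R)$ by the same computation, and $\tv(R)$ is compact in $\dist$ by the compact embedding $\ba \Subset \bta$ (valid as $\tilde\alpha < \alpha$). Entropy estimates for Besov embeddings (cf.\ the proof of Lemma~\ref{lem:psast} and \cite{ET08}) give $H(\rho, \tv(R), \dist) \lesssim (R/\rho)^{d/(\alpha-\tilde\alpha)}$, so provided $\tilde\alpha < \alpha - d/2$ the Dudley entropy bound for Gaussian processes yields a sample-continuous modification of $G$ on $(\tv(R),d)$; continuity then transfers to $\dist$ and is upgraded to uniform continuity by compactness. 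For the residual range $\tilde\alpha \in [\alpha - d/2, \alpha)$ (non-empty only if $\alpha > d$, which holds since $\alpha > \beta + d$), I would fix an auxiliary $\tilde\alpha^\flat \in (d/2, \alpha-d/2)$, run the above argument with $\tilde\alpha^\flat$ in place of $\tilde\alpha$, and use that uniform continuity in the coarser $B^{\tilde\alpha^\flat}_{11}(\mO)$-metric automatically implies uniform continuity in the finer $\bta$-metric (since a larger norm controls a smaller one).

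Finally, to obtain a single version of $\W$ under which $\Psi$ is uniformly continuous on \emph{every} $\tv(R)$ simultaneously, I would patch the continuous modifications along the increasing exhaustion $\tv = \bigcup_{n\ge 1} \tv(n)$, using that any two such modifications agree almost surely on any fixed countable $\dist$-dense subset, by separability of $(\tv(n),\dist)$ and Gaussianity. The main technical obstacle is securing Dudley integral convergence across the full admissible range of $\tilde\alpha$, for which the auxiliary coarser-metric detour is required; the patching across $R$'s is then routine by separability.
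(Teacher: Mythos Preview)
Your proposal is correct and follows essentially the same architecture as the paper: split off the deterministic drift (handled by Cauchy--Schwarz and the forward Lipschitz bound), then control the centred Gaussian piece via Dudley's entropy integral, invoking compactness of $(\tv(R),\dist)$ for the upgrade to uniform continuity. The paper phrases this slightly differently---it bounds the expected modulus of continuity $\E[M_\delta]$ directly rather than first producing a sample-continuous modification and then applying Heine--Cantor---but the substance is the same.

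The one genuine difference is in the entropy estimate fed into Dudley. You bound the covering numbers of the \emph{domain} $\tv(R)$ in the $B^{\tilde\alpha}_{11}$-metric, obtaining the exponent $d/(\alpha-\tilde\alpha)$; this forces the restriction $\tilde\alpha<\alpha-d/2$ for the Dudley integral to converge, and hence your auxiliary coarser-metric detour for the remaining range of $\tilde\alpha$. The paper instead bounds the covering numbers of the \emph{image} $\mG(\tv(R))\subset L^2(\mathcal D)$ directly, using the $(\kappa,\gamma,\beta)$-regularity of $\mG$ to transfer the $H^{-\kappa}$-entropy of $\tv(R)$ (exactly as in Lemma~\ref{lem:psast}). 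This yields the sharper exponent $d/(\alpha+\kappa)$, and since $\alpha>\beta+d\ge d>d/2-\kappa$ is already assumed, the Dudley integral converges for \emph{every} admissible $\tilde\alpha$ without case distinction. In short: your route works, but by estimating entropy upstream of $\mG$ you discard the $\kappa$ degrees of smoothing and pay for it with an avoidable case split; the paper's route reuses the image-entropy computation already done in Lemma~\ref{lem:psast} and is correspondingly cleaner. Your final patching over $R\in\mathbb N$ is a detail the paper leaves implicit, so you are if anything more careful there.
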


The proof of this lemma, which we include for the reader's convenience, is similar to the proof of \cite[Lemma 34]{NVW18}. The main difference is that we consider Besov instead of Sobolev spaces here.
\begin{proof} 
	
	For any $\delta>0$ define the modulus of continuity 
	\[M_\delta\coloneqq \sup_{F,H\in \bk(R), \dist(F,H)\leq \delta}\left|\ip{Y_\eps}{\mG(F)-\mG(H)}_{\ltwod}\right|,\]
	which is a random variable. Moreover, define the event 
	\[A\coloneqq\{\omega\in\Omega\;\,\Big|\,\; M_\delta\stackrel{\delta\to0}{\longrightarrow}0\},\]
	where $\Omega$ is the sample space supporting the law $\rp\coloneqq P^1_\W$ of $\W$, as in Section \ref{sec:fm}. It suffices to show that $\rp(A)=1$ and since $M_\delta$ is non-increasing in $\delta$, it is hence sufficient to show that $\E[M_\delta]\stackrel{\delta\to0}{\longrightarrow}0$.
	
	To show this we apply Dudley's theorem, \cite[Theorem 2.3.7]{GN16}, to the Gaussian process 
	\[(\W(\psi)\;:\;\psi\in\dr), \quad\dr\coloneqq\{\mG(F)\;\,|\,\;F\in \bk(R)\}.\]
	For any $\delta>0$, define 
	\[R_\delta\coloneqq \sup_{F,H\in \bk(R), \dist(F,H)\leq \delta}\norm{\mG(F)-\mG(H)}_{\ltwod}.\]
	We have that $\mG$ is continuous as a mapping from $(\bk(R), \dist)$ to $\ltwod$. Indeed, this can be seen using that the norm inducing $\dist$ is stronger than the $\ltwo$-norm (hence also than the $\hk$-norm) since $\tilde{\alpha}>\frac{d}2$ (see \cite[Theorem 3.3.1]{T83}), the continuous embedding of $B^\alpha_{11}(\mO)$ into $C^\beta(\mO)$ (see discussion after Assumption \ref{ass:truth}) and the $(\kappa,\gamma,\beta)$-regularity of $\mG$. Since $(\bk(R),\dist)$ is a compact metric space (see Theorem 4.33 in \cite{T08}), the Heine-Cantor theorem implies that $\mG$ is in fact uniformly continuous, hence we have that $R_\delta\stackrel{\delta\to0}{\longrightarrow}0$.
	
	By the same argument as in the proof of Lemma \ref{lem:psast} (with $m\coloneqq c(1+R^\gamma)$), we can use the $(\kappa,\gamma,\beta)$-regularity of $\mG$ (in the sense of \eqref{eq:kgb}) to obtain that
	\[H(\rho,\dr,\norm{\cdot}_{\ltwod})\lesssim\left(\frac{Rm}{\rho}\right)^{\frac{d}{\alpha+\kappa}}, \quad\rho>0.\]
	
	Combining, we get that 
	\begin{align*}
		\E[M_\delta]&\lesssim \E\left[\sup_{F,H\in\bk(R), \dist(F,H)\leq\delta}\left|\ip{\W}{\mG(F)-\mG(H)}_{\ltwod}\right|\right]\\
		&\leq\E\left[\sup_{\psi,\phi\in\dr, \norm{\psi-\phi}_{\ltwod}\leq R_\delta}\left|\ip{\W}{\psi-\phi}_{\ltwod}\right|\right]\\
		&\lesssim\int_0^{R_\delta}\left(\frac{Rm}{\rho}\right)^{\frac{d}{2(\alpha+\kappa)}}d\rho,
	\end{align*}
	where for the first inequality we used triangle and Cauchy-Schwarz inequalities together with $R_\delta\to0$, and for the third inequality Dudley's theorem (in particular, assertion (2.42) in \cite{GN16}, with $X(t)$ replaced by $\ip{\W}{\psi}_{\ltwod}$ which is a sub-Gaussian process with respect to the $L^2$-metric). The right hand side vanishes as $\delta\to0$, since $R_\delta\to0$ and since $\alpha>\frac{d}2-\kappa$, hence the proof is  complete.
\end{proof}

\begin{proof}[Proof of Theorem \ref{thm:plsfr}, part (i)] {Fix $\lambda,\eps>0$ and let $Y_\eps$ be a fixed draw from $P^\eps_{F_\dagger}$. Denote by $\toma(r)$ the closed ball in $\toma$ of radius $r>0$.} \\
	
	{{\bf Step 1: localization.} By our assumption on $\alpha$, the upper bound $\psast(\lambda,R)$ in Lemma \ref{lem:psast} satisfies $\psast(\lambda, R)/R^2\stackrel{R\to\infty}{\longrightarrow}0$. Hence there exists $\delta>0$ such that for all $R\ge \delta$, we have that $R^2\geq \conf\eps\psast(\lambda,R),$ for $\conf$ as in \eqref{eq:psast}. Thus applying Proposition \ref{thm:genplsfr} (which does not assume the existence of a maximizer of $\mathcal J$), we obtain that the events
		\[A_n\coloneqq \Big\{ \mathcal J \;\;\text{has a maximizer}\;\; \hat{F}\notin \tv\cap \toma(2^n)\Big\},\] are such that $\rp(A_n)\stackrel{n\to\infty}{\longrightarrow}0$, hence choosing $n\in\N$ large enough, we have that the equality
		\[\sup_{F\in\tv\cap \toma(2^n)}\mathcal J(F)=\sup_{F\in\tv} \mathcal J(F),\]
		holds with probability as close to one as desired.}
	
	{{\bf Step 2: local existence.}
		By Step 1, it suffices to show the almost sure existence of a maximizer of $\mathcal J$ over $\tv\cap \toma(2^n)$, for any $n\in\N$. Fix $n\in\N$. Suppose $\{F_j\}_{j=1}^\infty\subset \tv\cap \toma(2^n)$ is a maximizing sequence of $\mathcal J$.} The sequence  $\{\norm{F_j}_{\toma}\}$ is bounded from above, hence the same holds for $\{\norm{F_j}_{\ba}\}$ (recall that $\norm{\cdot}_{\toma}$ and $\norm{\cdot}_{\ba}$ are equivalent for functions which are compactly supported in $\mO$). Fix $\tilde{\alpha}$ such that $\beta+d<\tilde{\alpha}<\alpha$. By the compactness of the embedding of $\ba$ into $B^{\tilde{\alpha}}_{11}(\mO)$ (see Theorem 4.33 in \cite{T08}), there exists a subsequence $\{F_{j'}\}_{j'=1}^\infty$ converging strongly in $B^{\tilde{\alpha}}_{11}(\mO)$ to some $\hat{F}\in B^{\tilde{\alpha}}_{11}(\mO)$. {We will show that $\hat{F}\in \tv\cap \toma(2^n)$} and that $\hat{F}$ is a maximizer of~$\mathcal J$. 
	
	First notice that since $\tilde{\alpha}>\beta+d$, the $B^{\tilde{\alpha}}_{11}(\mO)$-norm is stronger than the supremum norm (see for example the discussion after Assumption \ref{ass:truth} which shows that $B^{\tilde{\alpha}}_{11}(\mO)$ is continuously embedded in $C^\beta(\mO)$). Since for all $j$ it holds $\supp(F_j)\subset \kmap$, the strong convergence $F_{j'}\to\hat{F}$ in $B^{\tilde{\alpha}}_{11}(\mO)$ implies that $\supp(\hat{F})\subset \kmap$. Therefore, it follows that $F_{j'}\to\hat{F}$ also in $\tomta$, and moreover that in order to show that {$\hat{F}\in\tv\cap\toma(2^n)$ it suffices to show that $\hat{F}\in\toma(2^n)$.}
	
	Since $\toma$ can be identified with a weighted $\ell^1$ space of single-index sequences (the space of sequences $a_k$ such that $k^{\frac{\alpha}d-\frac12}a_k$ is absolutely summable with the natural weighted $\ell^1$ norm),  $\toma$ can be identified with the dual of a normed sequence space, $c_\alpha$ (which contains sequences $a_k$ such that $k^{\frac12-\frac{\alpha}d}a_k\to0$, and has norm $\norm{(k^{\frac12-\frac{\alpha}d}a_k)}_{\ell^\infty}$). This is straightforward to show analogously to the proof that $\ell^1$ is the dual of the space of null sequences $c_0$, see \cite[Example 3, Section III.2]{RS80}. Notice that $c_\alpha$ is separable, since its dual is separable. By Banach-Alaoglu theorem (see \cite[Theorem 8.5 and Definition 8.1(4)]{A16} for the precise version we use), there exists a further subsequence $F_{j''}$ converging in the weak* topology to some {$\bar{F}\in \toma(2^n)$}. Now due to the fact that $c_{\tilde{\alpha}}\subset c_\alpha$, we have that the weak* topology of $\toma$ is stronger than the weak* topology of $\tomta$, hence we have that $F_{j''}\to\bar{F}$ also in the weak* topology of $\tomta$. Combining with the fact that the strong convergence $F_{j'}\to\hat{F}$ in $\tomta$ implies also the weak* convergence, 
	the uniqueness of the weak*-limit implies that $\bar{F}=\hat{F}$. We hence have that {$\hat{F}\in \toma(2^n)$.} 
	
	We next show that $\hat{F}$ is almost surely a maximizer of $\mathcal J$. We claim that
	\begin{align*}
		\mathcal J(\hat{F})&=2\ip{Y_\eps}{\mG(\hat{F})}_{\ltwod}-\norm{\mG(\hat{F})}^2_{\ltwod}-\lambda\norm{\hat{F}}_{\toma}\\
		&\geq 2\lim_{j''\to\infty}\ip{Y_\eps}{\mG(F_{j''})}_{\ltwod}-\lim_{j''\to\infty}\norm{\mG(F_{j''})}_{\ltwod}-\lambda \liminf_{j''\to\infty}\norm{F_{j''}}_{\toma}\\
		&=\limsup_{j''\to\infty}{\mathcal J(F_{j''})}.
	\end{align*} 
	Indeed, for the first two limits we use Lemma \ref{lem:exist} and the continuity of $\mG$ as shown in the proof of the same lemma, both combined with the convergence $F_{j''}\to\hat{F}$ in the strong topology of $\bta$. For the limit supremum, we use the lower semicontinuity of the $\toma$-norm in the weak* topology of $\toma$, combined with the convergence $F_{j''}\to\hat{F}$ in the weak* topology of $\toma$. Finally, recalling that $F_j$ is a maximizing sequence of $\mathcal J$ completes the proof.
\end{proof}

\subsection{An auxiliary result for estimators over a Besov ball}

We conclude this section with a variation of Theorem~\ref{thm:plsfr} where penalized estimators are restricted to a Besov-ball. The result is a consequence of the same techniques employed in the preceding proofs; it holds under the (weaker) regularity Assumption~\ref{ass:kgb} and will be crucially used in the proof of Theorem~\ref{thm:gen-contraction}. With $\tv$ and $\mathcal J_{\lambda,\eps}$ defined as in (\ref{eq:vi}) and (\ref{Jdef}), let us define the subclasses 
\begin{equation}\label{eq:vb}
\tv_B:=\{ F\in \tv: \|F\|_{\ba}\le B \},\; B>0,
\end{equation}
and associated \textit{restricted} penalised least squares estimators $\hat F^{(B)}\in \arg\max_{F\in \tv_B}\mathcal J_{\lambda,\eps}$.

\begin{thm}\label{thm:plsrbddnorm}
Let $\alpha, \kappa, \beta \ge 0$ be integers such that $\alpha>\beta+d$
and assume the forward map $\mG:\vi\to \ltwod$ satisfies the local Lipschitz assumption (\ref{eq:local-lip}) for any $F_1,F_2\in \tv\cap \cb$. Suppose that data arise as $Y_\eps\sim P^\eps_{F_\dagger}$ for some fixed $F_\dagger\in\vi$ and let $\de$ be given by \eqref{eq:deltaeps}.
Then for every $B>0$, a.s. under $P^\eps_{F_\dagger}$ there exists a maximiser $\hat F^{(B)}\in \arg\max_{F\in \tv_B}\mathcal J_{\lambda,\eps}$, and there exist large enough constants $C,C',M>0$ such that with $\lambda=C\delta_\eps^2$, any $F_\ast \in \tv_B$ and any $0<\eps<1,~L\ge M$,
    \begin{align}
    P^\eps_{F_\dagger}\big(\tals(\hat F^{(B)},F_\dagger)\geq 2(\tals(F_\ast, F_\dagger)+L^2\de^2)~\text{for some}~\hat F^{(B)}\in &\arg\max_{F\in \tv_B}\mathcal J_{\lambda,\eps}\big)\leq C'\exp\Big(-\frac{L^2\delta_\eps^2}{C'\eps^2}\Big).\label{eq:concbdb}
    \end{align}
\end{thm}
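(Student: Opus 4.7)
The plan is to follow the blueprint of Theorem~\ref{thm:plsfr}, exploiting that restriction to $\tv_B$ upgrades the local Lipschitz assumption (\ref{eq:local-lip}) into a \emph{uniform} one. Indeed, by the continuous embedding $B^\alpha_{11}(\mO) \hookrightarrow C^\beta(\mO)$ (guaranteed by $\alpha > \beta + d$, cf.\ (\ref{embedding})), every $F \in \tv_B$ satisfies $\|F\|_{\cb} \le C_0 B$; applying (\ref{eq:local-lip}) with $R = C_0 B$ thus yields a constant $L_B > 0$ such that
\[ \|\mG(F_1) - \mG(F_2)\|_{\ltwod} \le L_B \|F_1 - F_2\|_{\hk}, \qquad F_1, F_2 \in \tv_B. \]
This bound plays throughout the role of the $(\kappa,\gamma,\beta)$-regularity (\ref{eq:kgb}) with $\gamma = 0$, and subsumes the assumption made in Theorem~\ref{thm:plsfr} within the restricted setup.

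For the concentration estimate (\ref{eq:concbdb}), I would apply Proposition~\ref{thm:genplsfr} with $\mW = \tv_B$, the crucial input being a suitable upper bound $\psast(\lambda, R)$ on $\jast(\lambda, R)$. Revisiting the proof of Lemma~\ref{lem:psast} with $L_B$ in place of the factor $m = c(1 + (R^2/\lambda)^\gamma)$, the $\hk$-metric entropy estimate for the set $\vast(\lambda, R) \subseteq B^\alpha_{11}(\mO; R^2/\lambda)$ translates into
\[ H(\rho, \dast(\lambda, R), \|\cdot\|_{\ltwod}) \lesssim \Big(\frac{R^2 L_B}{\lambda \rho}\Big)^{1/s}, \qquad s = (\alpha + \kappa)/d. \]
Integrating yields that $\psast(\lambda, R) = R + c_B R^{1 + 1/(2s)} \lambda^{-1/(2s)}$ is a valid upper bound, and since $s > 1/2$ (trivially from $\alpha > \beta + d \ge d$ and $\kappa \ge 0$), the map $R \mapsto \psast(\lambda, R)/R^2$ is non-increasing. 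Choosing $\lambda = C \delta_\eps^2$ with $C = C(B)$ large enough, the requirement (\ref{eq:psast}) is fulfilled for $\delta = M \delta_\eps$ with $M = M(B)$ sufficiently large, and Proposition~\ref{thm:genplsfr} applied with $R = L \delta_\eps$ for any $L \ge M$ delivers (\ref{eq:concbdb}).

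For existence, I would adapt Step 2 of the proof of Theorem~\ref{thm:plsfr} part \textbf{i)}; the a priori Besov bound makes the localisation Step 1 unnecessary. Given a maximising sequence $\{F_j\} \subset \tv_B$, the compact embedding $B^\alpha_{11}(\mO) \hookrightarrow B^{\tilde\alpha}_{11}(\mO)$ for $\beta + d < \tilde\alpha < \alpha$ (Theorem~4.33 in \cite{T08}) yields a subsequence converging in $B^{\tilde\alpha}_{11}$ to some $\hat F^{(B)}$, and the Banach--Alaoglu argument used there places $\hat F^{(B)} \in \tv_B$. A version of Lemma~\ref{lem:exist} for $\tv_B$ (establishing almost sure uniform continuity of $F \mapsto \ip{Y_\eps}{\mG(F)}_{\ltwod}$ in the $B^{\tilde\alpha}_{11}$-metric) follows from Dudley's theorem applied with the uniform entropy bound derived above, and lower semicontinuity of $\|\cdot\|_{\toma}$ under weak* convergence then identifies $\hat F^{(B)}$ as a maximiser. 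The main obstacle is essentially bookkeeping rather than conceptual: the constant $L_B$ (and the derived $c_B$) scale with $B$ in a way dictated by the unquantified $C_R$ from (\ref{eq:local-lip}), so the constants $C, C', M$ in the theorem statement are necessarily $B$-dependent, as the statement indeed allows.
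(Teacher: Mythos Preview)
Your proposal is correct and follows essentially the same route as the paper: both apply Proposition~\ref{thm:genplsfr} with $\mW=\tv_B$, observe that the local Lipschitz estimate (\ref{eq:local-lip}) becomes uniform on $\tv_B$ via the embedding $B^\alpha_{11}\hookrightarrow C^\beta$, derive the entropy bound $\Psi_\ast^{(B)}(\lambda,R)=R+c_B R^{1+1/(2s)}\lambda^{-1/(2s)}$ (i.e.\ the $\gamma=0$ version of Lemma~\ref{lem:psast}), and verify (\ref{eq:psast}) for $\lambda\simeq\delta_\eps^2$, $\delta\simeq\delta_\eps$; existence is handled by the same compactness and weak*--lower-semicontinuity argument as in part~\textbf{i)} of Theorem~\ref{thm:plsfr}, with the a~priori bound $\|F\|_{\toma}\le B$ rendering the localisation step superfluous.
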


\begin{proof}
The almost sure existence of a maximiser is proved analogously to part {\textbf i)} of Theorem \ref{thm:plsfr}; indeed the lower semicontinuity of the $\toma$-norm in the weak* topology of $\toma$, allows us to additionally establish that $\norm{\hat{F}}_{\toma}\leq B$, since any maximising sequence belongs to $\tv_B$.

Let $R>0$. We apply Proposition \ref{thm:genplsfr} with $\mW=\tv_B$, where the relevant entropy integral $\jast(\lambda,R)$ to bound now depends on $B$.
     In order to bound this integral, we notice that by the same line of argument from Lemma \ref{lem:psast}, for some constant $c=c_B>0$,
    \[ \Psi_*^{(B)}(\lambda,R):= R+c_B(R^{1+\frac{1}{2s}}\lambda^{-\frac{1}{2s}}) \] is an upper bound for $\jast(\lambda,R)$, now using the Lipschitz estimate (\ref{eq:local-lip}) in place of (\ref{eq:kgb}). It is moreover evident that $1+\frac{1}{2s}<2$, since by assumption we have $\alpha>d+\beta>d/2-\kappa$. It thus follows that $R\mapsto \Psi_*^{(B)}(\lambda,R)/R^2$ is non-increasing in $R>0$. Thus, like in the proof of Theorem \ref{thm:plsfr}, choosing $\lambda=\tilde c^2\delta_\eps^2$ and $\delta=\tilde c\delta_\eps$ for some $\tilde c>0$ large enough completes the proof.
\end{proof}


\section{Regularity and concentration of Besov priors}\label{sec:prior} In this section, we summarize a number of regularity and concentration results for $B^\alpha_{11}$-Besov prior distributions from Definition~\ref{def:prior}, which are needed throughout this paper. 
\begin{lem}[Support]\label{lem:supp}
Consider the probability measures $\tp$ and $\Pi$ from Definition~\ref{def:prior}.
\begin{enumerate}[label=\textbf{\roman*)}]    
\item\label{lem:bb} For any $\gamma\geq1$ it holds
   \[ \tp(B^b_{\gamma\gamma}(\rd))=
    \begin{cases}
	1, \;&\text{if}\;\;b<\alpha-d, \\ 
	0, \;&\text{if}\;\;b\geq\alpha-d. 
\end{cases}\]
\item\label{lem:cb} Assume $\alpha>1+d$ and let $b$ be an integer such that $1\le b<\alpha-d$. Then it  holds 
\[\Pi(\cbee)=\tp(\cbeer)=1.\]
\end{enumerate}
\end{lem}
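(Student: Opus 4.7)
My plan is to reduce everything to the wavelet sequence characterisation of Besov norms and then do two separate arguments for the two directions of part \textbf{i)}. Part \textbf{ii)} will then follow from a suitable Besov embedding together with the cut-off construction in the definition of $\Pi$.

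For part \textbf{i)}, I would first recall the standard equivalence
\[
\|f\|_{B^b_{\gamma\gamma}(\R^d)}^\gamma \simeq \sum_{k\ge 1} 2^{k\gamma(b+d/2-d/\gamma)}\sum_{l=1}^{L_k}|f_{kl}|^\gamma
\]
for wavelet coefficients $f_{kl}$ with respect to a sufficiently regular Daubechies basis (ignoring the irrelevant coarse-scale term). Substituting $f_{kl}=2^{(d/2-\alpha)k}\xi_{kl}$ and taking expectations using $\E|\xi|^\gamma<\infty$ and $L_k\simeq 2^{dk}$, the $k$-th block contributes (up to constants) $2^{k\gamma(b-\alpha+d)}$. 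For $b<\alpha-d$ this is a convergent geometric series, so $\E\|\tilde F\|^\gamma_{B^b_{\gamma\gamma}}<\infty$ and in particular $\tp(B^b_{\gamma\gamma}(\R^d))=1$.

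For the converse direction ($b\ge\alpha-d$) I would argue as follows. The event $\{\|\tilde F\|_{B^b_{\gamma\gamma}}<\infty\}$ depends only on the tails of the independent sequence $(\xi_{kl})$, so by Kolmogorov's 0-1 law it has probability $0$ or $1$. To rule out probability $1$, observe that the $k$-th block is bounded below by $2^{k\gamma(b-\alpha+d)}\sum_l|\xi_{kl}|^\gamma$. Since $|\xi_{kl}|^\gamma$ are i.i.d.\ with positive mean and exponential moments, a Bernstein-type deviation inequality gives $\P(\sum_l|\xi_{kl}|^\gamma\ge c L_k)\ge 1-e^{-c' L_k}$ for some $c,c'>0$; by independence across $k$ and Borel--Cantelli this lower bound holds for infinitely many $k$ almost surely. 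Since $2^{k\gamma(b-\alpha+d)}\cdot L_k\ge 2^{dk}$ diverges (the exponent is nonnegative for $b\ge\alpha-d$), the total norm is a.s.\ infinite, forcing $\tp(B^b_{\gamma\gamma}(\R^d))=0$.

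For part \textbf{ii)}, fix an integer $b$ with $1\le b<\alpha-d$. Pick $b'$ with $b<b'<\alpha-d$ and $\gamma\ge 1$ so large that $b'-d/\gamma>b$; then the standard continuous embedding $B^{b'}_{\gamma\gamma}(\R^d)\hookrightarrow C^b(\R^d)$ (see, e.g., \cite{T83}) combined with part \textbf{i)} gives $\tp(C^b(\R^d))=1$. The statement $\Pi(C^b(\mO))=1$ then follows immediately because $\Pi=\mathcal L(\rho\chi\tilde F)$ with $\chi\in C_c^\infty(\mO)$: multiplication by $\rho\chi$ maps $C^b(\R^d)$ into $C^b_c(\mO)\subseteq C^b(\mO)$.

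The main technical nuisance I anticipate is the lower-bound half of part \textbf{i)}, where one has to make sure the blockwise Bernstein/Chernoff estimates actually force divergence of the Besov norm almost surely rather than just in expectation; invoking Kolmogorov's 0-1 law at the outset sidesteps most of the bookkeeping. Beyond that, everything is a matter of correctly tracking the exponents in the wavelet characterisation, which is purely routine.
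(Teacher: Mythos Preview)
Your approach is essentially the same as the paper's: compute $\E\|\tilde F\|_{B^b_{\gamma\gamma}}^\gamma$ via the wavelet characterisation for the $b<\alpha-d$ case, and for part \textbf{ii)} pick $\gamma$ large and $b'\in(b,\alpha-d)$ so that $B^{b'}_{\gamma\gamma}\hookrightarrow C^b$, then push forward under $F\mapsto\rho\chi F$. The paper in fact only writes out the $b<\alpha-d$ leg and refers the $b\ge\alpha-d$ case to \cite{ADH21,LSS09}; your $0$--$1$ law plus blockwise concentration argument is a standard and correct way to handle that direction. One small slip: the $k$-th block equals $2^{k[\gamma(b+d-\alpha)-d]}\sum_l|\xi_{kl}|^\gamma$, not $2^{k\gamma(b-\alpha+d)}\sum_l|\xi_{kl}|^\gamma$ (the latter is larger, hence not a lower bound); after inserting $\sum_l|\xi_{kl}|^\gamma\gtrsim L_k\simeq 2^{dk}$ the extra $2^{-dk}$ cancels and you correctly get blocks $\gtrsim 2^{k\gamma(b-\alpha+d)}\ge 1$, so the conclusion stands.
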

\begin{proof}
Part \ref{lem:bb} follows in a similar way as Lemma 5.2 in \cite{ADH21} using ideas going back to Lemma 2 \cite{LSS09}. For the reader's convenience, we include the proof of the first leg which is the part used in our proofs.
Using the wavelet characterization of the Besov norm from display \eqref{eq:wavenorm} of the Supplement, the fact that the univariate Laplace random variables $\xi_{kl}$ in Definition \ref{def:prior} have finite polynomial moments and that $L_k\simeq 2^{dk}$, we have 
\begin{align*}\E_{\tp}[\norm{F}_{B^b_{\gamma\gamma}(\rd)}^\gamma]&=
 \sum_{k=1}^\infty2^{\gamma k(b+\frac{d}2-\frac{d}\gamma)}\sum_{l=1}^{L_k}2^{(\frac{d}2-\alpha)\gamma k}\E[|\xi_{kl}|^\gamma] \\
&\lesssim \sum_{k=1}^\infty2^{\gamma k(b+\frac{d}2-\frac{d}\gamma)}2^{dk}2^{(\frac{d}2-\alpha)\gamma k}
=\sum_{k=1}^\infty 2^{\gamma k (b+d-\alpha)},
\end{align*}
where the last sum is finite since $b<\alpha-d$.

For part \ref{lem:cb} we adapt the technique of \cite{DHS12} (see Remark 2.1 and discussion following Proposition 2.2 ibid) to our setting.
By Lemma \ref{lem:chibd} of the Supplement, we have 
\[\tp(\cbeer)\leq \Pi(\cbee),\] hence it suffices to show that $\tp(\cbeer)=1$. 

Given $b<\alpha-d$, choose $\gamma\ge1$ large enough so that $\frac{d}{\gamma}<\alpha-d-b$, and fix $b'$ such that $b+\frac{d}\gamma<b'<\alpha-d$. 

First, since $b'>b+\frac{d}\gamma$, the following embedding holds:
\begin{equation}\label{eq:claimedembd}B^{b'}_{\gamma\gamma}(\rd)\subset \cbeer.\end{equation}
Indeed, for $b''>b$ non-integer,  by \cite[Equation 2.5.7/9]{T83} we have
\[B^{b''}_{\infty\infty}(\rd)=C^{b''}(\rd)\subset C^b(\rd).\] Choosing $b''$ so that $b<b''<b'-\frac{d}\gamma$, by \cite[Equation 2.7.1/12]{T83}, we have  
\[B^{b'}_{\gamma\gamma}(\rd)\subset B^{b''}_{\infty\infty}(\rd),\]
hence the embedding \eqref{eq:claimedembd} is verified.

Then, since $b'<\alpha-d$, the proof can be completed using part \ref{lem:bb}.
\end{proof}


The Laplace wavelet prior $\tp$ is \emph{logarithmically concave} \cite[Lemma 3.4]{ABDH18}, which implies a Fernique-type theorem \cite{CB74}. This leads to the following basic concentration inequality.

\begin{lem}[$C^b$-concentration]\label{lem:fernique}
Consider $\tp$ as defined in Definition~\ref{def:prior}. Assume $\alpha>1+d$ and let $1\le b<\alpha-d$ be an integer. Then, there exist constants $\cona, \conb>0$ such that for any $r>0$
\[\tp(\norm{F}_{\cbeer}\geq r)\leq \cona e^{-\conb r}.\]
\end{lem}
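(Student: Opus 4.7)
The plan is to derive the stated inequality as an instance of Borell's classical concentration theorem for symmetric log-concave probability measures on a separable Banach space (see Borell \cite{CB74}). Three ingredients need to be assembled: (i) identification of a Banach space on which $\tp$ lives, (ii) verification of symmetry and log-concavity of $\tp$ as a Borel probability measure on that space, and (iii) selection of a ``reference ball'' of sufficient prior mass to which Borell's inequality can then be applied.

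First I would take the ambient Banach space to be $X := C^b(\R^d)$. By Lemma~\ref{lem:supp}\ref{lem:cb}, the assumption $1\le b<\alpha-d$ guarantees $\tp(X)=1$, so we may regard $\tp$ as a Borel probability measure on $X$. The symmetry $\tp = \tp \circ (-\mathrm{Id})$ is immediate from the symmetry of the one-dimensional Laplace distributions used in the wavelet expansion~\eqref{eq:Ftilde}. Log-concavity of $\tp$ in the Banach-space sense (as used in Borell's framework) can be inherited from the fact that the product law of the i.i.d.\ Laplace coefficients $\{\xi_{kl}\}$ on $\R^{\N}$ is log-concave (since the Laplace density $x\mapsto \tfrac12 e^{-|x|}$ is log-concave and products preserve this), and that $\tp$ is the pushforward of this product measure under the continuous linear synthesis map sending a coefficient sequence to a function in $C^b(\R^d)$ with the weights $2^{(d/2-\alpha)k}$. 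Alternatively, one may simply invoke \cite[Lemma~3.4]{ABDH18} and note that log-concavity is stable under continuous linear pushforward.

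Next I would pick $M_0>0$ large enough so that the symmetric, convex, closed set $A := \{F\in X:\|F\|_{C^b(\R^d)}\le M_0\}$ satisfies $\tp(A)\ge 3/4$. This is possible because $\tp$ is tight on the separable Banach space $X$ and $A$ exhausts $X$ monotonically as $M_0\to\infty$. Borell's inequality \cite{CB74} then asserts that for every symmetric convex Borel set $A\subseteq X$ with $a:=\tp(A)>1/2$ and every $t\ge 1$,
\begin{equation*}
    \tp\big(X\setminus tA\big)\le a\left(\frac{1-a}{a}\right)^{(t+1)/2}.
\end{equation*}
Applying this with $a=3/4$ and writing $r = tM_0$ for $r\ge M_0$, I get
\begin{equation*}
    \tp\big(\|F\|_{C^b(\R^d)}> r\big) \le \tfrac{3}{4}\cdot 3^{-(r/M_0+1)/2} \le \cona' e^{-\conb r}
\end{equation*}
for suitable constants $\cona',\conb>0$. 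For $0<r\le M_0$ the bound is trivial after enlarging $\cona'$ to some $\cona\ge e^{\conb M_0}$, which finishes the proof.

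The main (and only genuine) obstacle is the careful justification that $\tp$ really is a log-concave Borel measure on the separable Banach space $C^b(\R^d)$, since Borell's theorem is formulated in exactly that setting; once this is in place, the rest is bookkeeping. No new analysis of the wavelet expansion is needed beyond the support statement already established in Lemma~\ref{lem:supp}.
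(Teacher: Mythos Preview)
Your proof is correct and rests on the same key ingredient as the paper's: log-concavity of $\tp$ (via \cite[Lemma~3.4]{ABDH18}) together with Borell's concentration results \cite{CB74}. The paper takes a marginally different route: it invokes Borell's Fernique-type theorem \cite[Theorem~3.1]{CB74} to obtain a finite exponential moment $\E_{\tp}[\exp(\conb\|F\|_{C^b(\R^d)})]<\infty$ for some small $\conb>0$, and then applies the exponential Markov inequality. You instead apply Borell's dilation inequality directly to a reference ball of mass $\ge 3/4$. The two routes are equivalent reformulations of the same underlying log-concave concentration phenomenon; your version is arguably more self-contained, while the paper's is slightly shorter once Borell's exponential-moment statement is taken as a black box.

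One small correction: you refer to $X=C^b(\R^d)$ as a \emph{separable} Banach space, which it is not. This does not affect your argument, however. The choice of $M_0$ with $\tp(\|F\|_{C^b(\R^d)}\le M_0)\ge 3/4$ follows simply from $\tp(X)=1$ and continuity of measure from below, and Borell's inequality \cite{CB74} is formulated for log-concave measures on locally convex spaces without any separability hypothesis. (If one insists on working in a separable space, one can note that $\tp$ is concentrated on functions supported in a fixed compact neighbourhood of $\mathcal O$, hence in a separable closed subspace of $C^b(\R^d)$.)
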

\begin{proof}
Let $r>0$. By Lemma \ref{lem:supp} and \cite[Theorem 3.1]{CB74} (since $\tp$ is logarithmically concave \cite[Lemma 3.4]{ABDH18}), there exists $\conb>0$ sufficiently small such that 
\[\E_{\tp}[\exp(\conb\norm{F}_{\cbeer})]<\infty.\]
Combining with the (exponential) Markov inequality, we get that 
\begin{align*}
\tp(F: \norm{F}_{\cbeer}\geq r)\leq \E_{\tp}[\exp(\conb\norm{F}_{\cbeer})]e^{-\conb r}.
\end{align*}
\end{proof}

The next lower bound on small ball probabilities of $\tp$ is derived from Theorem 4.2 in \cite{A07}.
\begin{lem}[Small ball probabilities]\label{lem:sb}
Let $\tp$ be as in Definition~\ref{def:prior} and $\kappa>d-\alpha$. Then there exists a constant $C>0$ such that for any $0<\eta<1$,
\[-\log\tp(F:\norm{F}_{\hkr}\leq \del)\le C\del^{-\frac{d}{\kappa+\alpha-d}}.\]
\end{lem}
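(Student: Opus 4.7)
The plan is to reduce the small-ball probability to a small deviation problem for a weighted sum of squared iid Laplace random variables in a Hilbert space, and then invoke Theorem 4.2 of \cite{A07}.

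First, I would use the wavelet characterization of $H^{-\kappa}(\R^d) = \hkr$, valid on the full space $\R^d$ since the Daubechies wavelets $\Psi$ are of sufficient regularity $S>\alpha$: for any square-summable coefficient sequence $(c_{kl})_{k\ge 1, 1\le l\le L_k}$ one has
\[
	\Big\| \sum_{k,l} c_{kl}\psi_{kl}\Big\|_{\hkr}^2 \simeq \sum_{k\ge 1}\sum_{l=1}^{L_k} 2^{-2\kappa k} |c_{kl}|^2.
\]
Since $\tf$ is a sum over the subcollection $\subcol$ (whose members we trivially extend to $\R^d$-wavelet coefficients with the remaining coordinates set to zero), substituting $c_{kl}=2^{(d/2-\alpha)k}\xi_{kl}$ yields
\[
	\|\tf\|_{\hkr}^2 \simeq \sum_{k\ge 1}\sum_{l=1}^{L_k} 2^{-(2\alpha+2\kappa-d)k}\xi_{kl}^2.
\]

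Second, I would reorganize the double-indexed weights $\sigma_{kl}^2 := 2^{-(2\alpha+2\kappa-d)k}$, with multiplicity $L_k\simeq 2^{dk}$ at scale $k$, into a single non-increasing scalar sequence $(\sigma_n^2)_{n\ge 1}$. A short counting argument using $L_k\simeq 2^{dk}$ shows that the $n$-th largest weight satisfies $\sigma_n\asymp n^{-r}$ with $r=(\alpha+\kappa)/d-1/2$, and the hypothesis $\kappa>d-\alpha$ is precisely what ensures $r>1/2$ (equivalently, that $\sigma_n\in\ell^2$). The event in question then rewrites as
\[
	\{\|\tf\|_{\hkr}\le \eta\}\quad\Longleftrightarrow\quad \Big\{\sum_n \sigma_n^2 \xi_n^2 \le c\,\eta^2\Big\}
\]
for some absolute constant $c>0$ coming from the wavelet characterization, where $(\xi_n)$ is a rearrangement of $(\xi_{kl})$ and hence still an iid standard Laplace sequence.

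Third, I would apply Theorem 4.2 of \cite{A07}, which furnishes small-deviation upper bounds for weighted sums of independent squared random variables in a Hilbert space in terms of the decay exponent $r$ of the weights and the behaviour of the summand distribution near the origin. Since the univariate Laplace density is bounded and continuous at zero, one has $\P(|\xi|\le t)\simeq t$ as $t\to 0$, so Aurzada's hypotheses are met. The theorem produces, after elementary rearrangement,
\[
	-\log \tp\big(\|\tf\|_{\hkr}\le \eta\big) \;\lesssim\; \eta^{-1/(r-1/2)} \;=\; \eta^{-d/(\kappa+\alpha-d)},
\]
which is the stated bound.

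The main obstacle is a matter of bookkeeping: making sure the wavelet characterization of $H^{-\kappa}(\R^d)$ applies to the randomly signed sums at hand (which is standard since $\Psi$ is $L^2(\R^d)$-orthonormal of regularity $S>\alpha>\kappa$), correctly computing the $n$-th rearranged weight from the dyadic scales with multiplicity $L_k\simeq 2^{dk}$, and verifying that the Laplace distribution falls within the class of summand distributions covered by Theorem 4.2 of \cite{A07}. Once these are in place, the exponent $d/(\kappa+\alpha-d)$ is dictated by the computation $r-1/2 = (\alpha+\kappa-d)/d$.
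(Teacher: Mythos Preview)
Your proposal is correct and follows essentially the same approach as the paper: both rewrite $\|\tf\|_{\hkr}^2$ via the wavelet characterization as a weighted $\ell^2$-sum of squared iid Laplace variables, re-index to a single sequence with polynomially decaying weights (the paper uses display \eqref{eq:wavesingle} for this), and then invoke Theorem~4.2 of \cite{A07} with the same parameters. The only small addition in the paper is the explicit observation that for $\eta$ bounded away from zero the bound is trivial, since Aurzada's result is asymptotic as $\eta\to 0$.
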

\begin{proof}
By the definition of $\tp$ and display \eqref{eq:wavesingle} of the Supplement, for some $c>0$ we have
\[\tp(F:\norm{F}_{\hkr}\le\del)\ge \rp\Big(\sum_{\ell=1}^\infty \ell^{\frac{d-2\alpha-2\kappa}d}\xi_\ell^2 \le c\del^2\Big),\]
where $\xi_\ell$ are independent and identically distributed standard Laplace real random variables. Thus, Theorem 4.2 in \cite{A07} (with $\mu=-\frac12+\frac{\alpha}d+\frac{\kappa}d$ and $p=2$) implies the desired result for any sufficiently small $\eta$. For $\eta<1$ bounded away from zero, the result is trivial since $\tp(F:\norm{F}_{\hkr}\lesssim\del)$ is then bounded away from zero.
\end{proof}


Associated to the measure $\tp$ from (\ref{eq:Ftilde}) are the spaces $\tsh$ and $\tom$ defined below, see \cite{ADH21}:
\begin{equation}\label{eq:tom}
\tom=\Big\{h\in\R^\infty: \sum_{k=1}^\infty2^{(\alpha-\frac{d}2)k}\sum_{l=1}^{L_k} |h_{kl}|<\infty\Big\}, \;\norm{h}_{\tom}=\sum_{k=1}^\infty2^{(\alpha-\frac{d}2)k}\sum_{l=1}^{L_k} |h_{kl}|,
\end{equation}
\begin{equation}\label{eq:tsh}\tsh=\Big\{h\in\R^\infty: \sum_{k=1}^\infty 2^{(2\alpha-d)k}\sum_{l=1}^{L_k}h_{kl}^2<\infty\Big\}, \;\norm{h}_{\tsh}=\Big(\sum_{k=1}^\infty2^{(2\alpha-d)k}\sum_{l=1}^{L_k} h_{kl}^2\Big)^\frac12.
\end{equation}
  On the one hand, the weighted-$\ell^1$ space $\tom$ determines the loss in probability under $\tp$ for non-centered balls compared to centered ones, see Lemma~\ref{lem:shsb} below. On the other hand, the weighted-$\ell^2$ space $\tsh$ is the space of admissible shifts of $\tp$, that is the space of shifts which give rise to equivalent measures to $\tp$. This space is relevant to our analysis, through the concentration inequality given in Lemma~\ref{lem:tal}. 
 
Notice that $\tom$ and $\tsh$ can be identified via the expansion \eqref{eq:wavexp} to subspaces of $L^2(\rd)$. In particular, for 
functions compactly supported on $\mO$, the norms of $\tom, \tsh$ are equivalent with the norms of the spaces $\ba, \had,$ respectively.%

\begin{lem}[Decentering]\label{lem:shsb}
Consider $\tp$ defined in Definition~\ref{def:prior}. Then for any $h\in \tom$ and any symmetric convex Borel-measurable $A\subset L^2(\rd)$, it holds 
\[\tp(h+A)\geq e^{-\norm{h}_{\tom}}\tp(A).\]
\end{lem}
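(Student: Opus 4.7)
The plan is to identify $\tp$ with an infinite product of re-scaled Laplace measures via the wavelet basis, and then exploit the pointwise bound $e^{-w|y+h|}\geq e^{-w|h|}e^{-w|y|}$ (an immediate consequence of the triangle inequality for $|\cdot|$) inside the resulting product density. I would establish the inequality first for cylinder sets via a direct finite-dimensional computation, and then extend to all Borel $A\subseteq L^2(\R^d)$ via the monotone class theorem.

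First, working in wavelet coordinates, Definition~\ref{def:prior} presents $\tp$ as the law of $\tf=\sum_{k,l}2^{(d/2-\alpha)k}\xi_{kl}\psi_{kl}$, so on the sequence-space side $\tp$ becomes the infinite product $\bigotimes_{k,l}\mu_{kl}$, where each $\mu_{kl}$ has density proportional to $\exp(-w_{kl}|x|)$ for the weight $w_{kl}=2^{(\alpha-d/2)k}$. Under this identification, the $\tom$-norm $\norm{h}_{\tom}=\sum_{k,l}2^{(\alpha-d/2)k}|h_{kl}|$ from \eqref{eq:tom} is exactly the weighted $\ell^1$-norm $\sum_{k,l}w_{kl}|h_{kl}|$ appearing in the exponent of the product density.

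Next, I would prove the inequality for cylinder sets. Fix a finite set $S$ of wavelet indices and a Borel cylinder $A\subseteq L^2(\R^d)$ depending only on the coordinates indexed by $S$. Writing $\tp(h+A)=P(\tf-h\in A)$, Fubini factorizes this as a finite-dimensional integral over $\R^{|S|}$, the remaining coordinates integrating to $1$. The density of $\tf-h$ projected onto $\R^{|S|}$ is $\prod_{(k,l)\in S}\tfrac{w_{kl}}{2}e^{-w_{kl}|y_{kl}+h_{kl}|}$, and the pointwise bound $e^{-w_{kl}|y_{kl}+h_{kl}|}\geq e^{-w_{kl}|h_{kl}|}e^{-w_{kl}|y_{kl}|}$ inserted under the integral yields
\[
\tp(h+A)\;\geq\;\exp\Bigl(-\sum_{(k,l)\in S}w_{kl}|h_{kl}|\Bigr)\,\tp(A)\;\geq\;e^{-\norm{h}_{\tom}}\tp(A).
\]

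Finally, I would extend this to arbitrary Borel $A$ via the monotone class theorem. The collection $\Lambda$ of Borel sets satisfying the desired inequality is closed under countable monotone limits: if $A_n\uparrow A$ (resp.\ $A_n\downarrow A$), then also $h+A_n\uparrow h+A$ (resp.\ $\downarrow$), so continuity of $\tp$ along monotone sequences preserves the bound. Cylinder sets based on finitely many wavelet coordinates form an algebra (complements and finite unions of cylinders are cylinders) which generates the Borel $\sigma$-algebra on $L^2(\R^d)$, the latter being separable with a countable wavelet basis. The monotone class theorem then extends the inequality to all Borel $A$. The argument does not in fact use the symmetric-convexity hypothesis on $A$; I do not anticipate a serious obstacle, since the proof reduces to a coordinate-wise triangle inequality inside a Laplace product density, the only measure-theoretic care being the standard cylinder-to-Borel extension.
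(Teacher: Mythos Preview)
Your argument is correct, and in fact establishes more than the lemma claims: as you note, the triangle-inequality bound $e^{-w|y+h|}\ge e^{-w|h|}e^{-w|y|}$ works coordinatewise for any Borel $A$, so neither symmetry nor convexity of $A$ is used. The paper's proof takes a different route and simply cites \cite[Proposition 2.11]{ADH21}, where the result is stated for centered balls, and remarks that the same proof works for symmetric convex sets. Your direct computation is more elementary and self-contained, and yields the sharper statement that the decentering bound holds for all Borel $A$; the paper's approach has the advantage of placing the result within the broader framework of $p$-exponential measures treated in \cite{ADH21}. One minor point: the cylinder sets based on the sub-collection $\Psi_{\mathcal O}$ generate the Borel $\sigma$-algebra on the closed linear span $V$ of $\Psi_{\mathcal O}$ inside $L^2(\R^d)$, not on all of $L^2(\R^d)$; but since $\tp$ is supported on $V$ and $h\in\tom\subset V$, one has $\tp(h+A)=\tp(h+(A\cap V))$ and $\tp(A)=\tp(A\cap V)$, so the monotone-class extension need only be carried out inside $V$, where your argument goes through verbatim.
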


\begin{proof}[Proof of Lemma \ref{lem:shsb}]
In \cite[Proposition 2.11]{ADH21}, Lemma \ref{lem:shsb} is proved for $A$ being a centered ball. An inspection of the proof shows that the result also holds for any symmetric convex Borel-measurable set $A$.
\end{proof}



By Lemma~\ref{lem:supp}, both the spaces $\bard$ and $\hard$ have zero measure under $\tp$. However, the bulk of the mass of $\tp$ concentrates on suitable, sufficiently large balls in these spaces, as long as they are `enlarged' by any set $A$ of positive measure. Specifically, we have the following concentration inequality, which is derived from Talagrand's work in \cite{T94}.


\begin{lem}[Two-level concentration]\label{lem:tal}
Consider $\tp$ defined in Definition~\ref{def:prior}. Then there exists a constant $\Lambda>0$, such that for any Borel-measurable $A\subset \hkr$ and any $r>0$, it holds that
\begin{align*}\tp\Big(F=F_1+F_2+F_3:~ F_1\in A, &~\norm{F_2}_{\bard}\leq r, ~\norm{F_3}_{\hard}\leq \sqrt{r},\\& F_i\in{\rm span}\{\subcol\}, i=1,2,3\Big)\geq 1-\frac1{\tp(A)}\exp(-r/\Lambda).\end{align*}
\end{lem}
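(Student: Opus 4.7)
The strategy is to translate the event to the wavelet coefficient space, where $\tp$ becomes a product of i.i.d. standard Laplace measures, and then to invoke the two-level concentration inequality of Talagrand \cite{T94} for product exponentials. This approach parallels the proof of the related Lemma \ref{lem:shsb} (cf.\ \cite[Proposition 2.11]{ADH21}).

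First, by the definition of $\tp$, the linear map
\begin{equation*}
\Phi:(\xi_{kl})\mapsto\sum_{k\ge 1,~ 1\le l\le L_k}2^{(d/2-\alpha)k}\xi_{kl}\psi_{kl}
\end{equation*}
is a measure-preserving bijection (on the relevant full-measure domain) from $\R^{\subcol}$ equipped with the product Laplace law $\mathbb{Q}:=\bigotimes_{k,l}\mathrm{Laplace}(1)$ onto the closed linear span of $\subcol$ in $L^2(\R^d)$ equipped with $\tp$. By standard wavelet characterisations of the Besov and Sobolev norms on $\R^d$, for $F=\Phi(\xi)$ we obtain the equivalences
\begin{equation*}
\|F\|_{\bard}\simeq \|\xi\|_{\ell^1(\subcol)},\qquad \|F\|_{\hard}^2\simeq \|\xi\|_{\ell^2(\subcol)}^2.
\end{equation*}
Setting $A':=\Phi^{-1}(A)$, which by measure-preservation satisfies $\mathbb{Q}(A')=\tp(A)$, and writing $B_{\ell^p}$ for the closed unit ball of $\ell^p(\subcol)$, the event on the left-hand side of the claim contains (by linearity of $\Phi$) the $\Phi$-image of
\begin{equation*}
A'+c_1 r\, B_{\ell^1}+c_2\sqrt{r}\, B_{\ell^2},
\end{equation*}
for some absolute constants $c_1,c_2>0$ depending only on the implicit constants in the norm equivalences above.

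Next, I would invoke Talagrand's two-level concentration inequality \cite{T94}, which for a product of i.i.d. symmetric exponential measures states: there exists a universal constant $K>0$ such that for every Borel-measurable $A'\subseteq \R^{\subcol}$ with $\mathbb{Q}(A')>0$ and every $t>0$,
\begin{equation*}
\mathbb{Q}\bigl(A'+t\, B_{\ell^1}+\sqrt{t}\, B_{\ell^2}\bigr)\ge 1-\frac{1}{\mathbb{Q}(A')}\exp(-t/K).
\end{equation*}
The infinite-dimensional statement follows from the standard finite-dimensional form by restricting to finite coordinate projections and passing to the limit via monotone convergence, which is justified by the product structure of $\mathbb{Q}$. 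Applying this with $t:=r\min\{c_1,\, c_2^{\,2}\}$ and absorbing all constants into a single $\Lambda>0$ then yields the claim.

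The principal non-trivial ingredient is Talagrand's inequality itself, which is quoted directly from \cite{T94}; every other step is a routine translation via the wavelet isomorphism. The main technical subtlety lies in the passage from the finite-dimensional form of Talagrand's bound to the countably-infinite-dimensional setting, which is handled by the standard monotone-convergence argument sketched above; I do not expect genuine difficulty here, but it is the one point at which some care is required.
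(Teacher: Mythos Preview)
Your proposal is correct and follows essentially the same route as the paper: translate the event to wavelet-coefficient (sequence) space, use the norm equivalences $\|F\|_{\bard}\simeq\|\xi\|_{\ell^1}$ and $\|F\|_{\hard}\simeq\|\xi\|_{\ell^2}$ under the rescaling $\xi_{kl}=2^{(\alpha-d/2)k}F_{kl}$, and invoke Talagrand's two-level inequality \cite{T94} for i.i.d.\ symmetric exponentials. The paper's proof differs only cosmetically, in that it phrases the sequence-space step via the weighted norms $\|\cdot\|_{\tom}$, $\|\cdot\|_{\tsh}$ on the coefficients $F_{kl}$ and delegates the Talagrand step to \cite[Proposition~2.15]{ADH21}, whereas you perform the additional diagonal change of variables to standardised coordinates and cite \cite{T94} directly; these are the same argument.
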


\begin{proof}
Since sequences $h\in\tsh$ (resp. $\tom$) can be uniquely identified to functions $F_h\in\hard$ (resp. $\bard$), while there exist constants $c_{\tsh}, c_{\tom}>0$ such that $\norm{h}_{\tsh}\leq c_{\tsh}\norm{F_h}_{\hard}$ and $\norm{h}_{\tom}\leq c_{\tom}\norm{F_h}_{\tom}$, 
we obtain that there exists $c>0$, such that for any $r>0$,
\begin{align*}\tp\Big(F=F_1+F_2+F_3:~ F_1\in A, ~\norm{F_2}_{\bard}\leq r, &~\norm{F_3}_{\hard}\leq \sqrt{r},\\&~F_i\in{\rm span}\{\subcol\}, i=1,2,3\Big)\\\geq\tp\Big(F=F_1+F_2+F_3:~ F_1\in A, ~\norm{F_2}_{\tom}\leq cr, &~\norm{F_3}_{\tsh}\leq \sqrt{cr}, \\&~F_i\in{\rm span}\{\subcol\}, i=1,2,3\Big).\end{align*}
In the proof of \cite[Proposition 2.15]{ADH21}, it is shown that there exists a fixed $K'>0$, such that (in sequence-space) for any $\tp$-measurable set $A\subset \R^\infty$ we have
\[\tp\big(F=F_1+F_2+F_3:~ F_1\in A, ~\norm{F_2}_{\tom}\leq r, ~\norm{F_3}_{\tsh}\leq \sqrt{r}\big)\geq 1-\frac1{\tp(A)}\exp(-r/K').\] The result follows by taking $\Lambda=K'/c$.
\end{proof}

\begin{acks}[Acknowledgments]
The authors would like to thank Richard Nickl for suggesting this collaboration and for funding SA's research visit to Cambridge during which this work was initiated. We thank three anonymous referees, the AE and the editor for several helpful remarks. SW gratefully acknowledges financial support by the European Research Council, ERC grant agreement 647812 (UQMSI) as well as (during the finishing stages of this work) the Air Force Office of Scientific Research under award number FA9550-20-1-0397.
 \end{acks}
\bigskip

\begin{center}\large{\textbf SUPPLEMENTARY MATERIAL}\end{center}

\setcounter{section}{0}
\renewcommand\thesection{\Alph{section}}

\section{Proofs for Section \ref{sec:pderes}}\label{pde-proofs}
\begin{proof}[Proof of Theorem \ref{div-fwdrate}]
\textit{\textbf{i)}.} First, we note that composition with the link function $\Phi:\R\to (K_{min}, \infty)$ introduces a one-to-one correspondence between $f\in \mathcal F$ from (\ref{Fdef}) and the parameter
\[F:=\Phi^{-1} \circ f\in \{F\in B^\alpha_{11}:\supp(F)\subseteq K\}\]
without positivity constraint. Since moreover the prior $f\sim \Pi^f_\eps$ from (\ref{f-prior}) can be seen as the pushforward under $\Phi$ of the Laplace prior $F\sim \Pi_\eps$ from (\ref{prior}), we wish to apply Theorem \ref{thm:gen-contraction} with the forward map $\mathcal G(F)=u_{\Phi\circ F}$, which in turn requires the verification of Assumptions \ref{ass:truth} and \ref{ass:kgb}.

The PDE solution $\mathcal G(F)\in L^2(\mathcal O)$ is well defined whenever $F\in C^2_\chi(\mO)$, by virtue of classical Schauder theory \cite{GT98}.  Thus, by the hypotheses of Theorem \ref{div-fwdrate}, Assumption \ref{ass:truth} and \eqref{domain-lb-req} are verified with $\kappa=1$ and $\beta=2$. To verify the local Lipschitz condition (\ref{eq:local-lip}), fix $R>0$ and let $F,H\in C^2_\chi(\mathcal O)$ such that $\|F\|_{C^2(\mathcal O)}\vee \|H\|_{C^2(\mathcal O)}\le R$. We write $f=\Phi\circ F,~h=\Phi\circ H$. Then, by arguing as in the proof of Theorem 9 in \cite{NVW18} (noting that Lemmas 21 and 22 there as well as the derivations in that proof hold for any $f, h\in C^2$) and subsequently applying Lemma \ref{lem-boring} below, there are constants $a,a_R>0$ (the latter of which may depend on $R$) such that
\begin{align}
	\nonumber
	\|\mG(F)-\mG(H)\|_{L^2}&=\|u_f-u_h\|_{L^2}\leq a (1+\|f\|_{C^1}^2)(1+\|h\|_{C^1})\|h-f\|_{(H^1)^*}\nonumber\\
	\label{div-loc-lip}
	\le &  a_R (1+\|F\|_{C^2}^4\vee \|H\|_{C^2}^4)\|F-H\|_{(H^1)^*} \le a_R(1+R^4) \|F-H\|_{(H^1)^*}. 
\end{align}
Thus, (\ref{eq:local-lip}) holds with the choices $\kappa=1$, $\gamma=4$, $\beta=2$ and $C_R=a_R(1+R^4)$. Theorem \ref{thm:gen-contraction} yields part \textbf{i)} of the theorem. 


\textit{\textbf{ii)}.} Again by Theorem \ref{thm:gen-contraction}, as well as Lemma \ref{lem-boring} below, there are some constants $m,m'>0$ such that
\begin{equation}\label{eq:Cbconc}\Pi_\eps^f\big(f:\|f\|_{C^{\alpha-d-1}}>m'\big| Y_\eps\big)\le \Pi_\eps\big(F:\|F\|_{C^{\alpha-d-1}}>m \big| Y_\eps\big)= O_{P_{f_0}^\eps}(e^{-\de^2/\eps^2}).\end{equation}
Thus, we may restrict the remainder of the proof to the event (of sufficiently high posterior probability)
\begin{equation}\label{Adef}
	A=\big\{ f : \|f\|_{C^{\alpha-d-1}(\mO)} \le M, ~\|u_f - u_{f_0}\|_{\ltwo}\le M' \delta_\eps \big\}
\end{equation}
where $M, M'>0$ are sufficiently large constants. Since $\alpha>d+2$, we have in particular that $\norm{f}_{C^2}\le M$  for any $f\in A$.


Next, we claim that for $f\in A$ it holds $u_f\in H^{\alpha-d}(\mO)$ and that
\begin{equation}\label{sobolev-bd}
	\sup_{f\in A}\|u_{f}\|_{H^{\alpha-d}}\le M'',
\end{equation} 
for some constant $M''>0$ (independent of $\eps$). For $\alpha>3d/2+3$, this follows from Lemma 23 in \cite{NVW18} with $\alpha-d-1$ in place of $\alpha$, as well as the embedding $C^{\alpha-d-1}(\mO)\subseteq H^{\alpha-d-1}(\mO)$. For $\alpha\le 3d/2+3$, one argues analogously to the proof of Lemma 23 of \cite{NVW18}, replacing the Sobolev multiplication inequality used there by the multiplication inequality \[\norm{fg}_{H^b}\lesssim\norm{f}_{C^b}\norm{g}_{H^b}, \]
which holds for any integer $b\ge 1$. As in \cite{NVW18}, it then follows that
\begin{equation}\label{eq:lem23}
	\norm{u_f}_{H^{b+1}}\lesssim1+\norm{f}_{C^b(\mO)}^{b^2+b},
\end{equation} 
for any $f\in\mF\cap C^b(\mO)$, and our claim follows by choosing $b=\alpha-d-1$.


We are now in a position to apply the stability estimate from Lemma 24 of \cite{NVW18} with $f_1=f_0$ and $f_2=f\in A$. [Note that $f_0\in C^{\alpha-d-1}\subset C^1$ and $u_{f_0}\in C^2$ under the given assumptions, and that the hypothesis $\alpha>d/2+2$ in Lemma 24 of \cite{NVW18} is only needed to ensure existence of a classical solution, and can be replaced by $f\in C^2$ here.] This yields that
\begin{equation}\label{lem24}
	\|f-f_0\|_{\ltwo}\lesssim  \|f\|_{C^1(\mO)}\|u_{f}-u_{f_0}\|_{H^2(\mO)}\lesssim \|u_{f}-u_{f_0}\|_{H^2(\mO)}.
\end{equation}
Combining this with (\ref{sobolev-bd}) and a standard interpolation inequality for Sobolev spaces (see, e.g., \cite{LM72}), we obtain
\begin{equation}\label{stab-interpol}
	\begin{split}
		\|f-f_0\|_{\ltwo}&\lesssim 
		\|u_f-u_{f_0}\|_{\ltwo}^{\frac{\alpha-d-2}{\alpha-d}}\|u_f-u_{f_0}\|_{H^{\alpha-d}(\mO)}^{\frac{2}{\alpha-d}} \lesssim \|u_f-u_{f_0}\|_{\ltwo}^{\frac{\alpha-d-2}{\alpha-d}}\lesssim \delta_\eps ^{\frac{\alpha-d-2}{\alpha-d}}.
	\end{split}
\end{equation}
Thus, using the first part of the theorem and \eqref{eq:Cbconc}, it follows that for some constants $L,c>0$,
\begin{equation*}
	\begin{split}
		\Pi_\eps^f \big(f: \|f-f_0\|_{\ltwo}\ge L\de^{\frac{\alpha-d-2}{\alpha-d}}\big| Y_\eps \big)\le \Pi_\eps^f\big( A^c|Y_\eps\big)= O_{P^\eps_{f_0}}(e^{-c\delta_\eps^2/\eps^2}),
	\end{split}
\end{equation*}
which completes the proof.
\end{proof}

\begin{proof}[Proof of Theorem \ref{div-maprate}]
Since $\Phi$ is now additionally assumed to be a \textit{regular} link function, part \textbf{iii)} of Lemma D.2 below implies that the constant $a_R$ in (\ref{div-loc-lip}) can be chosen independently of $R>0$. In particular, (\ref{eq:kgb}) is satisfied for $\kappa=1,~\beta=2$ and $\gamma=4$. Since we also have $\alpha\ge 9d/2-1=d/2+d\gamma-\kappa$, parts \textbf{i)} and \textbf{ii)} now follow from Theorem \ref{thm:plsfr} (specifically, via Corollary \ref{cor-mse}) applied to the map $\mG:F\mapsto u_{\Phi\circ F}$. Indeed, this follows from the fact that for $F:=\Phi^{-1}\circ f$, $\mathcal G(F)=u_{\Phi\circ F}$ and $\mathcal J_{\lambda,\eps}$ from (\ref{Jdef}), we have $\mathcal J_{\lambda,\eps}(F)=\mathcal I(f)$ and thus also $\mathcal G(\Fpls)=u_{\fpls}$.

Part \textbf{iii)} follows from a similar argument as in the proof of Theorem 11 of \cite{NVW18}; we thus include here some details but leave the rest to the reader. For notational convenience, let us abbreviate $\hat f\equiv \fpls,~\hat F\equiv \Fpls\equiv \Phi^{-1}\circ \fpls$. First, note that the preceding application of Theorem \ref{thm:plsfr}, in combination with Lemma 7.9 below, the embedding (\ref{embedding}) and the norm equivalence $\|\cdot\|_{\ba}\simeq \|\cdot\|_{\toma}$, yields that for some $c,c',M>0$ and all $L\ge M$, we have the concentration bound
\begin{align}
	P_{f_0}^\eps \big(\|u_{\hat f}-u_{f_0}\|_{L^2}^2&+\lambda_\eps \|\hat f\|_{C^{\alpha-d-1}}\ge cL^2\de^2 \big)\nonumber\\
	&\le P_{f_0}^\eps \big(\|\mG(\hat F)-\mG(F_0)\|_{L^2}^2+\lambda_\eps \|\hat F\|_{\toma}\ge c'L^2\de^2 \big)\le C\exp\Big(-\frac{L^2\de^2}{C\eps^2}\Big), \label{map-slices}
\end{align}
where $C>0$ can be taken to be the constant from Theorem \ref{thm:plsfr}.

Using the stability estimate (\ref{lem24}), arguing similarly as in (\ref{stab-interpol}) (but keeping track of constants more explicitly) and subsequently applying (\ref{eq:lem23}), we obtain that
\begin{equation}\label{map-interpol}
	\begin{split}
		\|\hat f-f_0\|_{L^2} &\lesssim \|\hat f\|_{C^1}\|u_{\hat f}-u_{f_0}\|_{H^2} 
		\lesssim  \|\hat f\|_{C^1} \|u_{\hat f}-u_{f_0}\|_{L^2}^{\frac{\alpha-d-2}{\alpha-d}}\|u_{\hat f}-u_{f_0}\|_{H^{\alpha-d}}^{\frac{2}{\alpha-d}}\\
		&\lesssim \|u_{\hat f}-u_{f_0}\|_{L^2}^{\frac{\alpha-d-2}{\alpha-d}}\|\hat f\|_{C^1} \big(\|u_{\hat f}\|_{H^{\alpha-d}}  +\|u_{f_0}\|_{H^{\alpha-d}}\big)^{\frac{2}{\alpha-d}}\\
		&\lesssim \|u_{\hat f}-u_{f_0}\|_{L^2}^{\frac{\alpha-d-2}{\alpha-d}}\|\hat f\|_{C^1} \big(1+\|\hat f\|_{C^{\alpha-d-1}} +\|F_0\|_{B^\alpha_{11}}\big)^{\frac{2b^2+2b}{\alpha-d}},
	\end{split}
\end{equation}
where $b=\alpha-d-1$. Combining the preceding derivation with (\ref{map-slices}), akin to the proof of Theorem 11 in \cite{NVW18}, concludes this proof.
\end{proof}

\begin{proof}[Proof of Theorems \ref{thm:schr} and \ref{schr-maprate}]
Since the proofs of Theorems \ref{thm:schr} and \ref{schr-maprate} follow the same lines as those of Theorems \ref{div-fwdrate} and \ref{div-maprate}, we give a slightly briefer presentation here. We adopt the notation from those preceding proofs.
\smallskip

    To see part \textbf{i)} of Theorem \ref{thm:schr}, we verify Assumptions \ref{ass:truth} and \ref{ass:kgb} with $\kappa=2$, $\gamma=4$ and $\beta=2$ in order to apply Theorem \ref{thm:gen-contraction}, and as in the preceding proof we focus on verifying (\ref{eq:local-lip}). Fix any $R>0$. By arguing as in the proof of Theorem 12 of \cite{NVW18}, as well as using Lemma \ref{lem-boring} below, we see that there exist $a,a_R>0$ such that for any $F,H \in C^2_\chi(\mathcal O),~\|F\|_{C^2(\mathcal O)}\vee \|H\|_{C^2(\mathcal O)}\le R$,
    \begin{equation*}
        \begin{split}
        \|u_f-u_h\|_{L^2}&\le a(1+\|f\|_\infty)(1+\|h\|_\infty)\|f-h\|_{(H^2)^*}\\
        &\le a_R(1+\|F\|_{C^2}^4\vee \|H\|_{C^2}^4)\|F-H\|_{(H^2)^*},
        \end{split}
    \end{equation*}
    yielding part \textbf{i)}.

    For part \textbf{ii)}, by virtue of the concentration bounds \eqref{eq:forwardcontraction}- (\ref{eq:Cbconcentration}) and Lemma \ref{lem-boring} we may again focus on the posterior event (on which the posterior concentrates)
	\begin{equation*}
		A=\big\{ f: \|f\|_{C^{\alpha-d-1}(\mO)} \le M, ~\|u_f - u_{f_0}\|_{\ltwo}\le M' \delta_\eps \big\},
	\end{equation*}
    for sufficiently large $M,M'>0$. Next, we claim that
	\begin{equation}\label{schr-sob-bound}
	    \sup_{f\in A}\|u_{f}\|_{H^{\alpha-d+1}}\le M'',
	\end{equation}  for some constant $M''>0$. 
	Indeed, we see this by modifying Lemma 27 of \cite{NVW18} in the same way as we modified Lemma 23 in \cite{NVW18} to obtain (\ref{eq:lem23}), which yields that for any integer $b\geq 1$ and $f\in\mathcal{F}\cap C^b(\mO)$,
	\begin{equation}\label{eq:schr-highreg}
	    \norm{u_f}_{H^{b+2}}\lesssim 1+\norm{f}_{C^b(\mO)}^{b/2+1}.
	\end{equation}
    
    \smallskip 
    
    The stability estimate from Lemma 28 of \cite{NVW18} yields that for some $c_1,c_2>0$ and any $f_0,f\in C^2$, we have 
	\begin{equation}\label{lem28}\norm{f-f_0}_{\ltwo}\leq c_1\big(e^{c_2\norm{f}_\infty}\norm{u_{f}-u_{f_0}}_{H^2(\mO)}+\norm{u_{f_0}}_{C^2(\mO)}e^{c_2\norm{f\vee f_0}_\infty}\norm{u_f-u_{f_0}}_{\ltwo}\big).
	\end{equation}
    Noting that $f_0\in C^2$ and $u_{f_0}\in C^2$ under the given assumptions, there exists $C>0$ such that for any $f\in A$,
	\[ \|f-f_0\|_{L^2}\le C \|u_f-u_{f_0}\|_{H^2}. \]
	Now, an interpolation argument analogous to (\ref{stab-interpol}) concludes the proof:
	\[ \|f-f_0\|_{L^2}\lesssim \|u_f-u_{f_0}\|_{L^2}^{\frac{\alpha-d-1}{\alpha-d+1}}\lesssim \de^{\frac{\alpha-d-1}{\alpha-d+1}}, ~~~ f \in A.\]
	
	\smallskip 
	
	It remains to prove Theorem \ref{schr-maprate}. Parts $\textbf{i)}$ and $\textbf{ii)}$ are proved analogously as for Theorem \ref{div-maprate}, noting that since $\kappa=2$ here, the additional assumption on $\alpha$ is $\alpha\ge 9d/2-2$. Part \textbf{iii)} now follows from a similar argument as given in (\ref{map-slices})-(\ref{map-interpol}) to prove part \textbf{iii)} of Theorem \ref{div-maprate}, using the stability estimate (\ref{lem28}) and (\ref{eq:schr-highreg}) in place of (\ref{lem24}) and (\ref{eq:lem23}) respectively.

\end{proof}

\section{Some facts about Besov spaces}\label{supp:besov}
For $s\ge 0$, $p,q\in [1,\infty]$ and integer $d\ge 1$, we denote by $B^s_{pq}(\R^d)$ the usual Besov spaces on $\R^d$ (see Section 2.2.1 in \cite{ET08} for definitions). For any (open) domain $\mathcal O\subseteq \R^d$, let $L^2(\mathcal O)$ denote the square-integrable functions on $\mathcal O$. Further we denote by $B^s_{pq}(\mO)$ the space of restrictions $f=g|_{\mO}$ of elements $g\in B^s_{pq}(\R^d)$ to $\mathcal O$, equipped with the quotient norm
\begin{equation}\label{besovpain}
	\|f\|_{B^s_{pq}(\mO)}= \inf_{g: f=g|_{\mO}} \|g\|_{B^s_{pq}(\R^d)},
\end{equation}
see \cite{ET08}, p.57. For $p=q=2$, $B^s_{22}$ coincides with the usual Sobolev space $H^s(\mO)$ of $s$-times weakly differentiable functions with derivatives in $L^2(\mathcal O)$. [For non-integer $s$, these are defined by interpolation]. 

Recall that $\subcol=\{\psi_{kl}\}_{k\ge1, 1\le l\le L_k}$ is the sub-collection of wavelets in the orthonormal wavelet basis $\Psi$, whose supports intersect $\mO$. For $F$ of the form \eqref{eq:wavexp}, the $B^s_{pq}(\rd)$-norms can be characterized via the coefficients $F_{kl}$
\begin{equation}\label{eq:wavenorm}
\norm{F}_{B^s_{pq}(\rd)}=\Bigg(\sum_{k=1}^\infty 2^{qk(s+\frac{d}2-\frac{d}p)}\Big(\sum_{l=1}^{L_k}|F_{kl}|^p\Big)^{q/p}\Bigg)^{1/q}, \quad s\in\R, 1\leq p,q<\infty.
\end{equation}
For $p=\infty$ or $q=\infty$, we replace the $\ell_p$ or $\ell_q$-norm with $\ell_\infty$, respectively; see, e.g., p. 370 in \cite{GN16}. When $p=q$, using $L_k\simeq 2^{dk}$ together with the asymptotic equivalence between the sequences $\gamma_{kl}=2^{sk}, k\in\N, 1\leq l\leq 2^{dk}$ and $\gamma_\ell=\ell^{s/d}, \ell \in \N$, for any $s\in\R$, and enumerating $\{F_{kl}\}_{k\in \N, 1\leq l\leq L_k}$ using a single index as $\{F_{\ell}\}_{\ell=1}^\infty$, the characterization \eqref{eq:wavenorm} can be simplified to 
\begin{equation}\label{eq:wavesingle}
 \norm{F}_{B^s_{pp}(\rd)}\simeq \Bigg(\sum_{\ell=1}^\infty \ell^{\frac{ps}d+\frac{p}2-1}|F_\ell|^p\Bigg)^\frac1p,\quad s\in\R, 1\leq p<\infty,  
\end{equation}
with the obvious modification for $p=\infty$.

\smallskip

We have the following two basic lemmas.

\begin{lem}\label{lem:chibd}
Let $K$ be a compact subset of $\mO$ and $\chi\in C^\infty_c(\mO)$ be a smooth cut-off function with $\chi\equiv 1$ on $K$. Moreover, suppose that $s, b, \kappa >0$, $p\in\{1,2\}$, and let $\mW=\mO$ or $\rd$. Then there exists some constant $C<\infty$ such that
\begin{align}
\norm{\chi F}_{B^s_{pp}(\mO)}&\le C \norm{F}_{B^s_{pp}(\mW)} ~~\text{for all}~ F\in B^s_{pp}(\mW),\label{besov-mult}\\
\norm{\chi F}_{\cbee}&\le C \norm{F}_{C^b(\mathcal W)} ~~\text{for all}~ F\in C^b(\mathcal W),\label{holder-mult}\\
\norm{\chi F}_{\hk}&\le C \norm{F}_{(H^\kappa(\mathcal W))^*} ~~\text{for all}~ F\in L^2(\mathcal W).\label{neg-mult}
\end{align}
\end{lem}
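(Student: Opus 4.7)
The plan is to derive all three inequalities from the single fact that $\chi$, extended by zero outside $\mO$, lies in $C^\infty_c(\R^d)$ and therefore acts as a bounded pointwise multiplier on the whole-space Besov, H\"older, and Sobolev scales. Transferring the resulting bound from $\R^d$ to $\mO$ is then just bookkeeping, handled uniformly for both choices of $\mW$: in the case $\mW = \R^d$ one uses that $\chi F$ is supported in $\mO$, so $\|\chi F\|_{B^s_{pp}(\mO)}\le \|\chi F\|_{B^s_{pp}(\R^d)}$ directly from the quotient norm \eqref{besovpain}; in the case $\mW = \mO$ one picks, for each $\eta>0$, an extension $\tilde F\in B^s_{pp}(\R^d)$ of $F$ with $\|\tilde F\|_{B^s_{pp}(\R^d)}\le \|F\|_{B^s_{pp}(\mO)}+\eta$, applies the whole-space multiplier estimate to $\chi\tilde F$, and exploits $(\chi\tilde F)|_\mO = \chi F$ to conclude.

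For \eqref{besov-mult} I will invoke the standard multiplier theorem for Besov spaces (see, e.g., Section 4.2.2 of \cite{T83}, or a paraproduct argument), which yields $\|\chi G\|_{B^s_{pp}(\R^d)}\lesssim \|G\|_{B^s_{pp}(\R^d)}$ for every $s>0$ and $p\in\{1,2\}$, and then transfer as above. Inequality \eqref{holder-mult} is essentially immediate: for integer $b$ it follows from the Leibniz rule combined with the uniform boundedness of all derivatives of $\chi$, and for non-integer $b$ one may either apply the same Besov multiplier statement (noting $C^b = B^b_{\infty\infty}$) or interpolate between the two nearest integer orders. Neither of these steps requires any delicate estimate.

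The main work lies in \eqref{neg-mult}, which I will establish by duality. For a test function $\psi\in H^\kappa(\mO)$ with $\|\psi\|_{H^\kappa(\mO)}\le 1$, I will pair as
\[ \Bigl|\int_\mO (\chi F)\,\psi\Bigr| = \Bigl|\int_\mW F\cdot \widetilde{\chi\psi}\Bigr| \le \|F\|_{(H^\kappa(\mW))^*}\,\|\widetilde{\chi\psi}\|_{H^\kappa(\mW)}, \]
where $\widetilde{\chi\psi}$ denotes the extension of $\chi\psi$ by zero to $\mW$, and then take the supremum over $\psi$ to obtain \eqref{neg-mult}. The only subtle step, which I view as the main obstacle, is the norm bound $\|\widetilde{\chi\psi}\|_{H^\kappa(\mW)}\lesssim \|\psi\|_{H^\kappa(\mO)}$: I will argue that since $\supp(\chi)\subset\mO$, the product $\chi\psi$ vanishes in a neighbourhood of $\partial\mO$, so extension by zero incurs no boundary-trace penalty. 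For integer $\kappa$ this is proved directly by the Leibniz rule applied to each multi-index $|b|\le\kappa$; for non-integer $\kappa$, I will fall back on \eqref{besov-mult} with $p=2$ (using $H^\kappa = B^\kappa_{22}$), or equivalently interpolate between integer endpoints. This boundary-support observation is the only place where the hypothesis $\chi\in C^\infty_c(\mO)$ (as opposed to merely $\chi\in C^\infty(\bar{\mO})$) is used in an essential way.
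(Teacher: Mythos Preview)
Your proposal is correct and follows essentially the same route as the paper's proof: the Besov and H\"older bounds are reduced to standard pointwise multiplier results on $\R^d$ (the paper cites Remark~1 on p.~143 of \cite{T78}; you cite Section~4.2.2 of \cite{T83}), while the negative-Sobolev bound is handled by exactly the duality argument you describe, moving $\chi$ onto the test function. The paper's proof is terser---it writes the duality step in one line without spelling out the extension-by-zero bound $\|\widetilde{\chi\psi}\|_{H^\kappa(\mW)}\lesssim\|\psi\|_{H^\kappa(\mO)}$---but the content is identical, and your additional detail on that point (Leibniz rule for integer $\kappa$, interpolation otherwise, using $\supp(\chi)\subset\mO$ to avoid boundary issues) fills in precisely what the paper leaves implicit.
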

\begin{proof}
The second statement follows directly from Remark 1 on p.143 in \cite{T78} (and the quotient characterization (\ref{besovpain})). The first statement, for non-integer $b$, follows from Remark 1 on p.143, and for integer $b$ is immediate from the product rule. Finally, (\ref{neg-mult}) is proved by a basic duality argument,
\[ \|\chi F\|_{\hk}= \sup_{\phi:\|\phi\|_{H^\kappa(\mathcal O)}\le 1}\Big|\int F (\chi\phi) \Big|\lesssim \norm{F}_{(H^\kappa(\mathcal W)^*}. \]

\end{proof}

\begin{lem}\label{lem-boring}
    Suppose that $K_{min}\in \R$ and that $\Phi:\R\to (K_{min},\infty)$ is a smooth, one-to-one map with $\Phi'(x)>0$, $x\in \R$.
    
    \smallskip
    
    \textbf{i)} For any integer $k\ge 1$ and $R>0$ there exists a constant $c_{k,R}>0$ such that for any $F\in C^k(\mathcal O)$ with $\|F\|_{C^k}\le R$, we have $\|\Phi \circ F\|_{C^k}\le c_{k,R}(1+\|F\|_{C^k}^k)$.
    
    \textbf{ii)} For any $R>0$ there exists $c_R>0$ such that for $\kappa\in \{1,2\}$ and any $F,H\in C^\kappa(\mathcal O)$ with $\|F\|_{C^\kappa},\|H\|_{C^\kappa}\le R$, we have
    \[ \|\Phi \circ F-\Phi \circ H\|_{(H^\kappa)^*}\le c_R\|F-H\|_{(H^\kappa)^*} (1+\|F\|_{C^\kappa}^\kappa\vee\|H\|_{C^\kappa}^\kappa).\]

    \textbf{iii)} If $\Phi$ is moreover a \emph{regular link function} in the sense of Definition \ref{def:reglink}, then $c_{k,R}$ and $c_R$ may be chosen independently of $R>0$.
\end{lem}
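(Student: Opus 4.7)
The plan is to reduce everything to the Faà di Bruno formula combined with a standard integer-order Sobolev multiplier inequality.

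For part \textbf{i)}, I would start from the identity (Faà di Bruno), valid for any multiindex $\alpha$ with $1\le|\alpha|\le k$,
\[\partial^\alpha(\Phi\circ F)=\sum_{j=1}^{|\alpha|}\Phi^{(j)}(F)\sum_{(\beta_1,\dots,\beta_j)}c_{j,\alpha,\beta}\prod_{i=1}^{j}\partial^{\beta_i}F,\]
where the inner sum runs over ordered tuples of nonzero multiindices with $\beta_1+\dots+\beta_j=\alpha$. Since $\|F\|_\infty\le R$ and $\Phi$ is smooth, each $|\Phi^{(j)}(F(x))|$ is bounded by $M_{j,R}:=\sup_{|y|\le R}|\Phi^{(j)}(y)|<\infty$, each $|\partial^{\beta_i}F|\le\|F\|_{C^k}$, and there are at most $k=|\alpha|$ such factors, yielding the asserted bound. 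The zero-order term is handled by $\|\Phi\circ F\|_\infty\le\sup_{|y|\le R}|\Phi(y)|$.

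For part \textbf{ii)}, I would use the representation
\[\Phi\circ F-\Phi\circ H=(F-H)\,g,\qquad g:=\int_0^1\Phi'\bigl(tF+(1-t)H\bigr)\,dt,\]
together with the duality definition of $(H^\kappa)^\ast$ to write
\[\|\Phi\circ F-\Phi\circ H\|_{(H^\kappa)^\ast}=\sup_{\|\phi\|_{H^\kappa}\le1}\int(F-H)\,g\phi\le\|F-H\|_{(H^\kappa)^\ast}\sup_{\|\phi\|_{H^\kappa}\le1}\|g\phi\|_{H^\kappa}.\]
For integer $\kappa\in\{1,2\}$ the Leibniz rule and the Cauchy--Schwarz inequality give the multiplier estimate $\|g\phi\|_{H^\kappa}\lesssim\|g\|_{C^\kappa}\|\phi\|_{H^\kappa}$, and part \textbf{i)} applied to the smooth function $\Phi'$ with $k=\kappa$ then controls $\|g\|_{C^\kappa}$ by a constant times $1+(\|F\|_{C^\kappa}\vee\|H\|_{C^\kappa})^\kappa$, as required.

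Finally, for part \textbf{iii)}, when $\Phi$ is a regular link function the constants $\sup_x|\Phi^{(j)}(x)|$ are finite and independent of $R$ for every $j\ge 1$, while the zeroth-order factor is handled via $|\Phi(y)|\le|\Phi(0)|+\|\Phi'\|_\infty|y|$. Tracing these uniform bounds through the Faà di Bruno expansion in part \textbf{i)} and through the argument for part \textbf{ii)} eliminates the $R$-dependence in the resulting constants. The only mildly nontrivial ingredient throughout is the integer-order multiplication estimate $\|g\phi\|_{H^\kappa}\lesssim\|g\|_{C^\kappa}\|\phi\|_{H^\kappa}$ for $\kappa\in\{1,2\}$, which is verified by expanding derivatives of $g\phi$ via the product rule and bounding each resulting term in $L^2$ using the sup-norm of the appropriate derivative of $g$.
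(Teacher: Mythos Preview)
Your proposal is correct and follows essentially the same approach as the paper. The paper's proof simply refers to the proof of Lemma~29 in \cite{NVW18} (parts 2 and 4), noting that on $C^k$-balls of radius $R$ one has a uniform $\|\cdot\|_\infty$-bound so that only local suprema $\sup_{|y|\le R}|\Phi^{(j)}(y)|$ of $\Phi$ and its derivatives are needed; your Fa\`a di Bruno argument for part~\textbf{i)} and the duality-plus-multiplier argument for part~\textbf{ii)} make precisely these computations explicit and self-contained.
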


\begin{proof}
Parts \textbf{i)} and \textbf{ii)} follow from the same arguments as in the proof of parts 2 and 4 of Lemma 29 in \cite{NVW18}. Indeed, this follows from the fact that fixed balls $\{F\in C^k(\mathcal O):\|F\|_{C^k(\mathcal O)}\le R\}$ possess a uniform bound in $\|\cdot\|_\infty$-norm, whence it indeed suffices to take into account the local suprema $\sup_{x\in [-R,R]}\big(|\Phi(x)|+...+ |\Phi^{(k)}(x)|\big)<\infty$ in the above-mentioned proof. Part \textbf{iii)} of this lemma is in fact identical to parts 2 and 4 of Lemma 29 in \cite{NVW18}.
\end{proof}

Finally, the following approximation result is needed for the proof of Theorem \ref{thm:gen-contraction}, in particular in the proof of Lemma \ref{lem:sieve}.
\begin{lem}\label{lem:approx} 
Assume $\alpha>d-\kappa$ and recall the sub-collection of wavelets $\subcol$ from \eqref{eq:subcol}. Then there exists a constant $a>0$ such that for all $M>1$, any $\eps>0$ and any $F\in{\rm span}\{\subcol\}$ with $\norm{F}_{\hard}\leq M\frac{\de}{\eps}$, there is a decomposition $F=F_1+F_2$ with $F_1, F_2\in{\rm span}\{\subcol\}$ satisfying $\norm{F_1}_{\hkr}\leq \frac{\de^3}{\eps^2}$ and $\norm{F_2}_{\bard}\leq a M^2\der$.
\end{lem}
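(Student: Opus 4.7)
The plan is to construct the required decomposition by a wavelet-scale truncation: for a threshold level $K=K(\eps,M)$ to be chosen, I would define $F_2$ to consist of all wavelet contributions of $F$ at scales $k\le K$ and $F_1$ to consist of those at scales $k>K$. Both pieces then automatically lie in $\mathrm{span}\{\subcol\}$, so the task reduces to checking the two required norm bounds in terms of $K$, $M$ and $\eps$.

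First I would recall the wavelet-sequence-space characterisations \eqref{eq:wavenorm}: for $F=\sum_{k,l}F_{kl}\psi_{kl}$, we have $\|F\|_{H^s(\rd)}^2\simeq \sum_k 2^{2sk}\sum_l F_{kl}^2$ and $\|F\|_{\bard}\simeq \sum_k 2^{(\alpha-d/2)k}\sum_l |F_{kl}|$. The hypothesis then reads $\sum_k 2^{(2\alpha-d)k}\sum_l F_{kl}^2\lesssim M^2\delta_\eps^2/\eps^2$. For the high-frequency part, factoring out the largest weight on the tail gives
\[
\|F_1\|_{\hkr}^2\lesssim 2^{-(2\alpha-d+2\kappa)K}\|F\|_{\hard}^2\lesssim 2^{-(2\alpha-d+2\kappa)K}\frac{M^2\delta_\eps^2}{\eps^2}.
\]
For the low-frequency part, Cauchy--Schwarz in $l$ (using $L_k\simeq 2^{dk}$) followed by Cauchy--Schwarz in $k$ gives
\[
\|F_2\|_{\bard}\lesssim \sum_{k\le K}2^{\alpha k}\Big(\sum_l F_{kl}^2\Big)^{1/2}\lesssim 2^{dK/2}\|F\|_{\hard}\lesssim 2^{dK/2}\frac{M\delta_\eps}{\eps}.
\]

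The final step is to calibrate $K$. Requiring $\|F_1\|_{\hkr}\le \delta_\eps^3/\eps^2$ forces $2^K\gtrsim (M^2\eps^2/\delta_\eps^4)^{1/(2\alpha-d+2\kappa)}$, while requiring $\|F_2\|_{\bard}\lesssim M^2\delta_\eps^2/\eps^2$ forces $2^K\lesssim (M\delta_\eps/\eps)^{2/d}$. Substituting $\delta_\eps/\eps=\eps^{-d/(2\alpha+2\kappa+d)}$, both bounds on $2^K$ share the same $\eps$-power $\eps^{-2/(2\alpha+2\kappa+d)}$. The corresponding $M$-exponents are $2/(2\alpha-d+2\kappa)$ and $2/d$, and the hypothesis $\alpha>d-\kappa$ (equivalently $2\alpha-d+2\kappa>d$) gives $2/(2\alpha-d+2\kappa)<2/d$, so for $M\ge 1$ the two one-sided inequalities are compatible up to absolute constants. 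I would therefore choose $K$ to be the smallest integer for which the lower bound on $2^K$ holds with a leading constant large enough to force $\|F_1\|_{\hkr}\le \delta_\eps^3/\eps^2$ with constant one; by the compatibility above this $K$ also satisfies the upper bound, and the constant $a$ absorbs all remaining $\lesssim$-constants independently of $M$ and $\eps$.

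The main obstacle is this last bookkeeping step: the $F_1$-bound must hold with constant exactly one whereas the $F_2$-bound may absorb constants into $a$, so one must saturate the right one-sided inequality. The strict gap $2\alpha+2\kappa-d>d$ granted by $\alpha>d-\kappa$ is precisely the assumption that allows a single integer $K$ to achieve both estimates simultaneously; if that gap were reversed, no such $K$ would exist for large $M$.
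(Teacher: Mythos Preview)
Your proposal is correct and follows essentially the same approach as the paper: a wavelet-scale truncation with $F_2$ the low-frequency part and $F_1$ the high-frequency part, bounding $\|F_1\|_{\hkr}$ by factoring out the tail weight and bounding $\|F_2\|_{\bard}$ via Cauchy--Schwarz, then choosing the threshold $K$ to match both constraints. The paper's only cosmetic difference is that it fixes $\K$ as the minimal level achieving $\|F-F_{1:\K}\|_{\hkr}\le \de^3/\eps^2$ and then verifies the $\bard$-bound by direct substitution, obtaining the sharper exponent $M^{(2\alpha+2\kappa)/(2\alpha+2\kappa-d)}\le M^2$; your two-sided compatibility argument amounts to the same computation.
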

\begin{proof}
Let $F\in{\rm span}\{\subcol\}$ identified with its sequence of coefficients  $\{F_{kl}\}_{k\ge 1, 1\le l\le L_k}$ with respect to $\subcol$. Assume $\norm{F}_{\hard}\leq  M\frac{\de}{\eps}$ and consider the approximation of $F$ by the truncated sequence of coefficients $F_{1:\K}=\{F_{kl}\}_{1\le k \le \K, 1\le l \le L_k}$,  where $\K$ is the minimal truncation point such that 
$\norm{F-F_{1:\K}}_{\hkr}\leq \frac{\de^3}{\eps^2}$. Recalling $\hkr=H^{-\kappa}(\R^d)$ and using \eqref{eq:wavenorm}, we can bound 
\begin{align*}
\norm{F-F_{1:\K}}_{\hkr}^2&=\sum_{k>\K} 2^{-2\kappa k}\sum_{l=1}^{L_k}F_{kl}^2=\sum_{k>\K} 2^{(d-2\alpha-2\kappa)k}2^{(2\alpha-d)k}\sum_{l=1}^{L_k}F_{kl}^2\\
&\leq 2^{(d-2\alpha-2\kappa)\K}\norm{F}_{\hard}^2\leq 2^{(d-2\alpha-2\kappa)\K}M^2\der,
\end{align*}where for the first inequality we used the assumption $d<\kappa+\alpha$.
It thus follows that \\$\K\sim \frac2{2\alpha+2\kappa-d}\log_2(M\frac{\eps}{\delta_\eps^2})$ (notice that $\frac{\eps}{\delta_\eps^2}=\eps^{\frac{d-2\kappa-2\alpha}{2\kappa+2\alpha+d}}\to\infty$ as $\eps\to0$).

By \eqref{eq:wavenorm} and the Cauchy-Schwarz inequality, the approximation $F_{1:\K}$ satisfies 
\begin{align*}
\norm{F_{1:\K}}_{\bard}&=\sum_{k=1}^\K 2^{(\alpha-\frac{d}2)k}\sum_{l=1}^{L_k}|F_{kl}|\\
&\leq (\sum_{k=1}^{\K}L_k)^\frac12\Big(\sum_{k=1}^{\K} 2^{(2\alpha-d)k}\sum_{l=1}^{L_k}F_{kl}^2\Big)^{\frac12}\leq a 2^{\frac{d}2\K}\norm{F}_{\hard},
\end{align*}
for some constant $a>0$  (depending only on $\subcol$), and where we used that $L_k=\mathcal{O}(2^{dk})$.
Combining the assumed bound on $\norm{F}_{\hard}$ with the expressions for $\K$ and $\delta_\eps$ (see \eqref{eq:deltaeps}), we then have \[\norm{F_{1:\K}}_{\bard}\leq aM^{\frac{2\kappa+2\alpha}{2\kappa+2\alpha-d}}\der\leq aM^2\der,\]
where for the last bound we used the assumption $\alpha>d-\kappa$. 

The claim follows by taking $F_2=F_{1:\K}$ and $F_1=F-F_{1:\K}$.
\end{proof}

\bibliography{tvbib}
\bibliographystyle{imsart-number}
\end{document}